\documentclass[11pt, english, twoside]{article}
\usepackage[margin=3cm]{geometry}
\usepackage{amsbsy,amsmath,amsthm,amssymb}

\usepackage{fourier}
\DeclareMathAlphabet{\mathcal}{OMS}{cmsy}{m}{n}

\usepackage[dvipsnames]{xcolor}
\usepackage{bm}
\usepackage{esint}
\usepackage{babel}
\usepackage[colorlinks=true, citecolor = blue]{hyperref}

\usepackage{indentfirst}
\usepackage{changepage, cases}	
\usepackage{setspace}
\usepackage{indentfirst}
\usepackage{graphicx}
\usepackage{calc}
\usepackage{orcidlink}
\usepackage{booktabs,multirow}
\usepackage{makecell}

\usepackage{fancyhdr}
\allowdisplaybreaks

\theoremstyle{plain}
\newtheorem{theorem}{Theorem}[section]
\newtheorem{lemma}[theorem]{Lemma}
\newtheorem{corollary}[theorem]{Corollary}
\newtheorem{proposition}[theorem]{Proposition}

\theoremstyle{remark}
\newtheorem{remark}[theorem]{Remark}

\theoremstyle{definition}

\newenvironment{proof of theorem 1.1}{{\noindent \em Proof of Theorem 1.1.}}{\hfill $\Box$\par}
\newenvironment{proof of theorem 1.2}{{\noindent \em Proof of Theorem 1.2.}}{\hfill $\Box$\par}

\DeclareSymbolFont{EulerExtension}{U}{euex}{m}{n}
\DeclareMathSymbol{\euintop}{\mathop} {EulerExtension}{"52}
\DeclareMathSymbol{\euointop}{\mathop} {EulerExtension}{"48}

\begin{document}
	\title{Multivariate Laguerre polynomials: new results and insights}
	\author{
		Liang-Jia Guo \orcidlink{0000-0002-1146-5896} $^{\rm 1}$,
		Min-Jie Luo \orcidlink{0000-0001-7433-4490} $^{\rm 1}$\thanks{Corresponding author.\vspace{3mm}\\ E-mail addresses: 
			\url{guoliangjia77@outlook.com} (L.-J. Guo); 
			\url{mathwinnie@live.com} (M.-J. Luo); \\
			\url{rkraina_7@hotmail.com} (R.K. Raina); \url{j1380533g@outlook.com} (J.-J. Wang)}, 
		Ravinder Krishna Raina $^{\rm 2}$, 
		Jia-Jun Wang $^{\rm 1}$}
	\date{}
	\maketitle
	\begin{center}\small
		$^{1}$\emph{School of Mathematics and Statistics, Donghua University,\\ 
			Shanghai 201620, People's Republic of China.}
	\end{center}
	\begin{center}\small
		$^{3}${\emph{M.P. University of Agriculture and Technology, Udaipur (Rajasthan), India\\
				\emph{Present address:} 10/11, Ganpati Vihar, Opposite Sector 5,\\
				Udaipur-313002, Rajasthan, India.}} 
	\end{center}
	
	
	\begin{abstract}
		In this paper, we study various properties of Erd\'{e}lyi's multivariate Laguerre polynomials $L_{n_1,\cdots,n_k}^{(\alpha)}(x_1,\cdots,x_k)$ including their generating functions, product formulas and fractional integral representations. Many useful consequences are derived. New insights into various classical results including the relationships of the multivariate Laguerre polynomials with Oshima’s fractional calculus operator and with an integral formula of Srivastava and Niukkanen are also mentioned. We further present an interesting evaluation for a generating function of the main diagonal sequence $L_{n,\cdots,n}^{(-\beta-kn)}(x_1,\cdots,x_k)$ which involves in a natural way the well-known Le Roy function ([Darboux Bull. 24 (2) (1899), 245--268]; [Toulouse Ann. 2 (2) (1900), 317--430]). The significance of the multivariate Laguerre polynomials $L_{n_1,\cdots,n_k}^{(\alpha)}(x_1,\cdots,x_k)$ is demonstrated by observing that this class not only includes the generalized Hardy-Hille formula and the product formula but also contains the multiple Laguerre polynomials of the second kind as its important special cases. We briefly indicate also possible lines of future work.  \\
		
		\noindent\textbf{Keywords}: 
		Generating function; 
		Humbert function; 
		Le Roy function; 
		multiple Laguerre polynomials of the second kind;   
		multivariate Laguerre polynomial.
		\\
		
		\noindent\textbf{Mathematics Subject Classification (2020)}:
		33C45; 
		33C50; 
		33C65; 
		33E12. 
	\end{abstract}

	
	\section{Introduction and objectives}\label{Introduction}

	The usual (unvariate) Laguerre polynomials $L_n^{(\alpha)}(x)$ can be defined by the generating function (\cite[p. 449, Eq. (18.12.13)]{NIST Handbook})
	\[
	(1-z)^{-\alpha-1}\exp\left(-\frac{xz}{1-z}\right)
	=\sum_{n=0}^{\infty}L_n^{(\alpha)}(x)z^n, ~ |z|<1.
	\]
	Explicitly, we have
	\begin{equation}\label{Def-LaguerreP}
		L_n^{(\alpha)}(z)=\frac{(\alpha+1)_n}{n!} {}_{1}F_{1}\left[\begin{matrix}
			-n\\
			\alpha+1
		\end{matrix};z\right],
	\end{equation}
where ${}_{1}F_{1}$ is the confluent hypergeometric function \cite[p. 443, Eq. (18.5.12)]{NIST Handbook}. During the past century, numerous results (e.g., generating functions, expansions, inequalities, and  asymptotics) have been obtained for the Laguerre polynomials $L_n^{(\alpha)}(z)$ and their variants (see, for example, \cite{Derezinski-Gas-Vatto-2025}, \cite{Frenzen-Wong-1988}, \cite{Koornwinder-1977}, \cite{Lewandowski-Szynal-1998}, \cite{Niukkanen-1984}, \cite{Ozmen-2019}, \cite{SanchezRuiz-LopezArtes-Dehesa-2003} and \cite{Srivastava-1989}). These results have not only motivated new developments in several areas of mathematics, but have also been extensively applied in physics. However, the multivariate cases remain largely unexplored. In this paper, we pay attention to a certain class of multivariate Laguerre polynomials.

There are many multivariate generalizations of the Laguerre polynomials $L_n^{(\alpha)}(x)$ in the literature (see, for example, \cite[p. 532, Eq. (1.4)]{Aktas-Erkus-Duman-2013}, \cite[p. 14, Eq. (1.1)]{Chak-1970}, \cite[p. 324, Definition 1]{Demenin-1971} and \cite[p. 387, Eq. (30)]{Liu-Lin-Lu-Srivastava-2013a}). There of course exists other types of higher-dimensional generalizations such as Chikuse's matrix-variate Laguerre polynomials \cite{Chikuse-1992}. But in the present work, we focus on the multivariate Laguerre polynomials introduced by Erd\'{e}lyi \cite{Erdelyi-1937} in 1937.  Erd\'{e}lyi's multivariate Laguerre polynomials of $k$-variables $L_{n_1,\cdots,n_k}^{(\alpha)}(x_1,\cdots,x_k)$ can be defined by the generating function
\begin{equation}\label{GF-MLaguerre}
	(1-z_1-\cdots-z_k)^{-\alpha-1}
	\exp\left\{-\frac{x_1 z_1+\cdots+x_k z_k}{1-z_1-\cdots-z_k}\right\}
	=\sum_{n_1,\cdots,n_k=0}^{\infty}L_{n_1,\cdots,n_k}^{(\alpha)}(x_1,\cdots,x_k)
	z_1^{n_1}\cdots z_k^{n_k},
\end{equation}
where $|z_1|+\cdots+|z_k|<1$, and it may be noted here that its particular case for $k=2$ was studied independently by Feldheim \cite{Feldheim-1943}.

For convenience, we first state the following notations which are used in the sequel. Boldface letters always denote vectors of dimension $k$; for instance, $\mathbf{n}=(n_1,\cdots,n_k)\in\mathbb{Z}_{\geq0}^k$ and $\mathbf{x}=(x_1,\cdots,x_k)\in\mathbb{C}^k$. For any $\mathbf{x},\mathbf{y}\in\mathbb{C}^k$, we let $\langle \mathbf{x}\rangle:=x_1+\cdots+x_k$, $\mathbf{x}\circ\mathbf{y}:=(x_1 y_1,\cdots, x_k y_k)$, $\mathbf{x}/\mathbf{y}:=(x_1/y_1,\cdots, x_k/y_k)$ and
$\mathbf{x}^{\mathbf{n}}:=x_1^{n_1}\cdots x_k^{n_k}$. For any  $\mathbf{n}\in\mathbb{Z}_{\geq0}^k$, we write $\mathbf{n}!=n_1!\cdots n_k!$. In addition, 
\[
\int_{[a,b]^k}f(\mathbf{x})\mathrm{d}\mathbf{x}~~~\text{means}~~~
\int_a^b\cdots\int_a^b f(x_1,\cdots,x_k)\mathrm{d}x_1\cdots\mathrm{d}x_k, 
\]
\[
\sum_{\mathbf{n}=\mathbf{0}}^{\infty}\Omega(\mathbf{n})~~~\text{means}~~~
\sum_{n_1,\cdots,n_k=0}^{\infty}\Omega(n_1,\cdots,n_k),
\]
and
\[
\sum_{\mathbf{j}=\mathbf{0}}^{\mathbf{n}}\Omega(\mathbf{j})~~~\text{means}~~~
\sum_{j_1=0}^{n_1}\cdots\sum_{j_k=0}^{n_k}\Omega(j_1,\cdots,j_k).
\]

Erd\'{e}lyi \cite[p. 458, Eq. (11.3)]{Erdelyi-1937} has shown that 
\begin{equation}\label{Def-MLaguerre}
	L_{\mathbf{n}}^{(\alpha)}(\mathbf{x})
	=\frac{(\alpha+1)_{\langle\mathbf{n}\rangle}}{\mathbf{n}!}\Phi_2^{(k)}\left[-n_1,\cdots,-n_k;\alpha+1;\mathbf{x}\right],
\end{equation}
where $\Phi_2^{(k)}$ is defined by (see \cite[p. 446]{Erdelyi-1937} and \cite[p. 34]{Srivastava-Karlsson-Book-1985})
\begin{equation}\label{ConfluentLauricella}
	\Phi_2^{(k)}\left[b_1,\cdots,b_k;c;\mathbf{x}\right]
	:=\sum_{\mathbf{m}=0}^{\infty}
	\frac{(b_1)_{m_1}\cdots (b_k)_{m_k}}{(c)_{\langle\mathbf{m}\rangle}}\frac{\mathbf{x}^{\mathbf{m}}}{\mathbf{m}!}.
\end{equation}
The function $\Phi_2^{(k)}$ can be considered as a confluent form of the Lauricella function $F_D^{(k)}$ or $F_B^{(k)}$ \cite[p. 34, Eq. (10)]{Srivastava-Karlsson-Book-1985}.

A very remarkable result concerning $L_{\mathbf{n}}^{(\alpha)}(\mathbf{x})$ that Erd\'{e}lyi has proved is the following elegant generalization of \emph{Hardy-Hille formula} \cite[p. 464, Eq. (14.5)]{Erdelyi-1937}:
\begin{equation}\label{GF-Erdelyi}
	\sum_{\mathbf{n}=\mathbf{0}}^{\infty}
	\frac{\mathbf{n}!L_{\mathbf{n}}^{(\alpha)}(\mathbf{x})L_{\mathbf{n}}^{(\alpha)}(\mathbf{y})}{\Gamma(\alpha+1+\langle\mathbf{n}\rangle)}\mathbf{u}^{\mathbf{n}}=\frac{(\mathbf{u};\mathbf{x},\mathbf{y})^{-\frac{\alpha}{2}}}{(1-\langle\mathbf{u}\rangle)^{\alpha+1}}
	\exp\left(-\frac{\langle\mathbf{u}\circ\mathbf{x}\rangle+\langle\mathbf{u}\circ\mathbf{y}\rangle}{1-\langle\mathbf{u}\rangle}\right)
	I_{\alpha}\big(2\sqrt{(\mathbf{u};\mathbf{x},\mathbf{y})}\big),
\end{equation}
where $I_\alpha(z)$ denotes the usual modified Bessel function \cite[p. 249, Eq. (10.25.2)]{NIST Handbook} and 
\[
(\mathbf{u};\mathbf{x},\mathbf{y}):=\frac{\langle\mathbf{u}\circ\mathbf{x}\circ\mathbf{y}\rangle-\langle\mathbf{u}\rangle\langle\mathbf{u}\circ\mathbf{x}\circ\mathbf{y}\rangle+\langle\mathbf{u}\circ\mathbf{x}\rangle\langle\mathbf{u}\circ\mathbf{y}\rangle}{(1-\langle\mathbf{u}\rangle)^{2}}.
\]
The formula \eqref{GF-Erdelyi}, in our opinion, makes this set of multivariate polynomials particularly useful and important. In a later work on the subject, Carlitz \cite{Carlitz-1970} provided an elementary proof of \eqref{GF-Erdelyi} and considered its other new generalizations.

Another important result deserving a mention here can be derived from the work of Liu \emph{et al.} \cite{Liu-Lin-Lu-Srivastava-2013b}. In their paper, integral representations for the product of two \emph{Carlitz-Srivastava polynomials of the first} and \emph{second kinds} are established. Due to the similarity of the Carlitz-Srivastava polynomials of the second kind with the multivariate Laguerre polynomials $L_{\mathbf{n}}^{(\alpha)}(\mathbf{x})$, we can easily obtain: 
\begin{align}\label{ProductFormula-MLaguerre}
	\frac{L_{\mathbf{m}}^{(\alpha)}(\mathbf{x})}{\Gamma(\alpha+1+\langle \mathbf{m}\rangle)}
	\frac{L_{\mathbf{n}}^{(\beta)}(\mathbf{y})}{\Gamma(\beta+1+\langle \mathbf{n}\rangle)}
	&=\frac{2^{\alpha+\beta+\langle \mathbf{m}+ \mathbf{n}\rangle}}{\pi^{k+1}}\int_{\left[-\frac{\pi}{2},\frac{\pi}{2}\right]^{k+1}}
	\mathrm{e}^{\mathrm{i}(\alpha-\beta)\theta+\mathrm{i}\langle(\mathbf{m}-\mathbf{n})\circ\bm{\varphi}\rangle} \cos^{\alpha+\beta}\theta \notag\\
	&\cdot\cos^{m_1+n_1} \varphi_1 \cdots \cos^{m_k+n_k} \varphi_k \frac{L_{\mathbf{m}+\mathbf{n}}^{(\alpha+\beta)}\left(\Xi(\mathbf{x},\mathbf{y}; \theta,\bm{\varphi})\right)}{\Gamma(\alpha+\beta+1+\langle \mathbf{m}+ \mathbf{n}\rangle)}  \mathrm{d}\theta \mathrm{d}\bm{\varphi}, 
\end{align}
where $\Xi(\mathbf{x},\mathbf{y}; \theta,\bm{\varphi})
=(\xi_1+\eta_1,\cdots, \xi_k+\eta_k)$ with 
\begin{align}
	\xi_j
	&\equiv\xi_j(x_j; \theta, \varphi_j):=x_j 
	\mathrm{e}^{\mathrm{i}(\theta-\varphi_j)}
	\sec\varphi_j 
	\cos \theta,\label{ProductFormula-MLaguerre-A}\\
	\eta_j
	&\equiv\eta_j(y_j; \theta, \varphi_j):=y_j\mathrm{e}^{\mathrm{i}(\varphi_j-\theta)}
	\sec\varphi_j 
	\cos \theta.\label{ProductFormula-MLaguerre-B}
\end{align}
When $k=1$, \eqref{ProductFormula-MLaguerre} reduces to Carlitz's result \cite[p. 26, Eq. (3)]{Carlitz-1962}. For more detailed information about Carlitz-Srivastava polynomials, the interested reader may refer to \cite{Carlitz-Srivastava-1976a} and \cite{Carlitz-Srivastava-1976b}. In Section \ref{Sec-ProductFormulas}, we will attempt further investigation on the result \eqref{ProductFormula-MLaguerre}.

In addition to the two results presented above, we demonstrate several other interesting studies related to $L_{\mathbf{n}}^{(\alpha)}(\mathbf{x})$. Exton \cite{Exton-1991} gave more generating functions for $L_{\mathbf{n}}^{(\alpha)}(\mathbf{x})$. \"{O}zarslan's work \cite{Ozarslan-2014} reveals a connection between $L_{\mathbf{n}}^{(\alpha)}(\mathbf{x})$ and singular integral equations. Recently, Luo and Raina \cite{Luo-Raina-2026} established two inequalities for $L_{\mathbf{n}}^{(\alpha)}(\mathbf{x})$, thereby generalizing the celebrated Szeg\H{o}'s inequality for $L_n^{(\alpha)}(x)$. In addition, as an application in mathematical statistics, multivariate Laguerre polynomials were used by Ong and Ng \cite{Ong-Ng-2013} to construct a bivariate noncentral negative binomial distribution.

Finally, we further highlight the importance of multivariate Laguerre polynomials $L_{\mathbf{n}}^{(\alpha)}(\mathbf{x})$ by introducing their connection to \emph{multiple Laguerre polynomials of the second kind}, which is usually defined by the orthogonality conditions (see, for example, \cite[p. 856]{Lee-2007})
\[
\int_{0}^{\infty}x^\ell  L_{\mathbf{n}}^{(\alpha;\bm{\beta})}(x)x^{\alpha}\mathrm{e}^{\beta_j x}\mathrm{d}x=0, \quad \ell=0,1,\cdots,n_j-1,
\]
for $j=1,\cdots,k$, where $\bm{\beta}=(\beta_1,\cdots,\beta_k)$ and $\alpha>-1$, $\beta_j<0$ and $\beta_i\neq\beta_j$; whenever $i\neq j$. Recently, there has been increasing interest in this class of polynomials (see, for example, \cite{Bleher-Kuijlaars-2005}, \cite[Chapter 23]{Ismail-Book-2005}, \cite{Lee-2007,Lee-2013} and \cite{Zhang-Filipuk-2014}). The multiple Laguerre polynomials of the second kind $L_{\mathbf{n}}^{(\alpha;\bm{\beta})}(x)$ can also be defined by their generating function as follows (see \cite[p. 858, Theorem 2.3]{Lee-2007} and \cite[p. 1068]{Lee-2013}): 
\begin{equation}\label{GF-multipleLaguerreP-1}
	(1-\langle\mathbf{t}\rangle)^{-\alpha-1}\exp\left\{\frac{\langle\bm{\beta}\circ\mathbf{t}\rangle x}{1-\langle\mathbf{t}\rangle}\right\}
	=\sum_{\mathbf{n}=\mathbf{0}}^{\infty} L_{\mathbf{n}}^{(\alpha;\bm{\beta})}(x)\frac{\mathbf{t}^{\mathbf{n}}}{\mathbf{n}!}.
\end{equation}
On the other hand, if we let $\mathbf{z}=\mathbf{t}$ and $\mathbf{x}=-x\bm{\beta}$ in \eqref{GF-MLaguerre}, we get
\begin{equation}\label{GF-multipleLaguerreP-2}
	(1-\langle\mathbf{t}\rangle)^{-\alpha-1}
	\exp\left\{\frac{\langle\bm{\beta}\circ\mathbf{t}\rangle x}{1-\langle\mathbf{t}\rangle}\right\}
	=\sum_{\mathbf{n}=\mathbf{0}}^{\infty}L_{\mathbf{n}}^{(\alpha)}(-\beta_1x,\cdots,-\beta_k x)\mathbf{t}^{\mathbf{n}}.
\end{equation}
Then, by comparing the coefficients of \eqref{GF-multipleLaguerreP-1} and \eqref{GF-multipleLaguerreP-2}, we obtain the following interesting relationship between the multivariate Laguerre polynomials defined by \eqref{Def-MLaguerre} with the multiple Laguerre polynomials of second kind \eqref{GF-multipleLaguerreP-1}:
\begin{equation}\label{Rel-MLaguerre-MultipleLaguerre}
	L_{\mathbf{n}}^{(\alpha;\bm{\beta})}(x)
	=\mathbf{n}! L_{\mathbf{n}}^{(\alpha)}(-\beta_1x,\cdots,-\beta_k x).
\end{equation}
From \eqref{Def-MLaguerre} and \eqref{Rel-MLaguerre-MultipleLaguerre}, some common explicit representations \cite[p. 209--210]{MartinezFinkelshtein-Morales-Perales-2026} for the multiple Laguerre polynomials of the second kind can be easily obtained. To the best of our knowledge, the formula \eqref{Rel-MLaguerre-MultipleLaguerre}, connecting the polynomials $L_{\mathbf{n}}^{(\alpha;\bm{\beta})}(x)$ and $L_{\mathbf{n}}^{(\alpha)}(\mathbf{x})$, has not been explicitly reported in the earlier literature. With the help of \eqref{Rel-MLaguerre-MultipleLaguerre}, all the results obtained in this paper can be related to the multiple Laguerre polynomials of the second kind through this expression.

We organize and pursue this paper as follows: In Sections \ref{Sec-GF-Type-I} and \ref{Sec-GF-Type-II}, we prove two classes of generating functions for $L_{\mathbf{n}}^{-\beta-\langle\mathbf{n}\rangle}(\mathbf{x})$ and explore their various applications. In Section \ref{Sec-ProductFormulas}, we present a multiple contour integral version of the product formula \eqref{ProductFormula-MLaguerre}. In Section \ref{Sec-OshimaFC}, we discuss the relationship between an integral formula of Srivastava and Niukkanen and Oshima's fractional calculus. Some concluding remarks are given in Section \ref{Sec-ConcludingRemarks}, which includes an interesting evaluation of a generating function of the main diagonal sequence $L_{n,\cdots,n}^{(-\beta-kn)}(x_1,\cdots,x_k)$ involving the well-known Le Roy function \cite{Le Roy-1899}.

\section{Generating functions of type I}\label{Sec-GF-Type-I}

\subsection{Main results}

For the (univariate) Laguerre polynomials $L_n^{(\alpha)}(x)$, the following generating function is well-known (see \cite[p. 57]{AbdulHalim-AlSalam-1963}; see also \cite[p. 1222]{BenCheikh-Lamiri-2007}):
\begin{equation}\label{GF-AbdulHalim-AlSalam} 
	\Phi_1\left[\alpha,\beta;\gamma;-u,-uw\right]
	=\sum_{n=0}^{\infty}\frac{(\alpha)_n}{(\gamma)_n} 
	L_{n}^{(-\beta-n)}(w) u^n, 
\end{equation}
where $\Phi_1$ denotes Humbert's bivariate confluent hypergeometric function defined by (\cite[p. 25, Eq. (16)]{Srivastava-Karlsson-Book-1985})
\[
\Phi_1[a,b;c;x,y]:=\sum_{m,n=0}^{\infty}\frac{(a)_{m+n}(b)_m}{(c)_{m+n}}\frac{x^m}{m!}\frac{y^n}{n!},\quad |x|<1,|y|<\infty.
\]
On letting $\alpha=\gamma$ in \eqref{GF-AbdulHalim-AlSalam}, we obtain the familiar result \cite[p. 4, Eq. (9)]{MCBride-Book-1971}  given by
\begin{equation}\label{GF-Laguerre-2}
	\mathrm{e}^{-uw}(1+u)^{-\beta}
	=\sum_{n=0}^{\infty}L_{n}^{(-\beta-n)}(w)u^n.
\end{equation}
In the literature, polynomials defined by $f_n^{\beta}(z):=(-1)^n L_n^{(-\beta-n)}(z)$ are also termed as the modified Laguerre polynomials.

The following proposition provides a multivariate generalization of \eqref{GF-AbdulHalim-AlSalam}.

\begin{proposition}\label{Prop-1}
	Let $\alpha\in\mathbb{C}$ and $-\beta,\gamma\in\mathbb{C}\setminus\mathbb{Z}_{\leq 0}$. 
	Also, let $\mathbf{u},\mathbf{x}\in\mathbb{C}^k$ with $|u_1|+\cdots+|u_k|<1$. We have then
	\begin{equation}\label{Prop-1-1}
		\Phi_1\left[\alpha,\beta;\gamma;-\langle \mathbf{u}\rangle,-\langle \mathbf{u}\circ \mathbf{x}\rangle\right]\\
		=\sum_{\mathbf{n}=\mathbf{0}}^{\infty}
		\frac{(\alpha)_{\langle \mathbf{n}\rangle}}{(\gamma)_{\langle \mathbf{n}\rangle}}
		L_{\mathbf{n}}^{(-\beta-\langle \mathbf{n}\rangle)}(\mathbf{x})
		\mathbf{u}^{\mathbf{n}}.
	\end{equation}
	In particular, when $\alpha=\gamma$, we have
	\begin{equation}\label{Prop-1-2}
		\mathrm{e}^{-\langle\mathbf{u}\circ \mathbf{x}\rangle}(1+\langle\mathbf{u}\rangle)^{-\beta}
		=\sum_{\mathbf{n}=\mathbf{0}}^{\infty}
		L_{\mathbf{n}}^{(-\beta-\langle \mathbf{n}\rangle)}(\mathbf{x})
		\mathbf{u}^{\mathbf{n}}.
	\end{equation}
\end{proposition}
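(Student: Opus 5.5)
The plan is to expand both sides as power series in $\mathbf{u}$ and compare coefficients of $\mathbf{u}^{\mathbf{n}}$. This reduces the identity to the explicit form \eqref{Def-MLaguerre} of $L_{\mathbf{n}}^{(-\beta-\langle\mathbf{n}\rangle)}(\mathbf{x})$.

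\emph{Left-hand side.} In $\Phi_1[\alpha,\beta;\gamma;X,Y]$ put $X=-\langle\mathbf{u}\rangle$ and $Y=-\langle\mathbf{u}\circ\mathbf{x}\rangle$, and expand both sums by the multinomial theorem:
\[
\langle\mathbf{u}\rangle^m=\sum_{\langle\mathbf{p}\rangle=m}\frac{m!}{\mathbf{p}!}\mathbf{u}^{\mathbf{p}},
\qquad
\langle\mathbf{u}\circ\mathbf{x}\rangle^{\ell}=\sum_{\langle\mathbf{q}\rangle=\ell}\frac{\ell!}{\mathbf{q}!}\mathbf{u}^{\mathbf{q}}\mathbf{x}^{\mathbf{q}} .
\]
The factors $m!$ and $\ell!$ cancel the $m!\,\ell!$ in the denominator of $\Phi_1$. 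This leaves
\[
\sum_{\mathbf{p},\mathbf{q}}\frac{(\alpha)_{\langle\mathbf{p}+\mathbf{q}\rangle}(\beta)_{\langle\mathbf{p}\rangle}}{(\gamma)_{\langle\mathbf{p}+\mathbf{q}\rangle}}(-1)^{\langle\mathbf{p}+\mathbf{q}\rangle}\frac{\mathbf{u}^{\mathbf{p}+\mathbf{q}}\mathbf{x}^{\mathbf{q}}}{\mathbf{p}!\,\mathbf{q}!}.
\]
Setting $\mathbf{n}=\mathbf{p}+\mathbf{q}$, the coefficient of $\mathbf{u}^{\mathbf{n}}$ is
\[
\frac{(\alpha)_{\langle\mathbf{n}\rangle}}{(\gamma)_{\langle\mathbf{n}\rangle}}(-1)^{\langle\mathbf{n}\rangle}\sum_{\mathbf{q}=\mathbf{0}}^{\mathbf{n}}\frac{(\beta)_{\langle\mathbf{n}-\mathbf{q}\rangle}}{(\mathbf{n}-\mathbf{q})!\,\mathbf{q}!}\mathbf{x}^{\mathbf{q}} .
\]
The rearrangement is legitimate by absolute convergence. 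Indeed, replacing every quantity by its modulus gives a majorant $\Phi_1$-type series in $|u_1|+\cdots+|u_k|<1$ and $|\langle|\mathbf{u}\circ\mathbf{x}|\rangle|$, and this converges because $\Phi_1$ converges for $|X|<1$ and for all $Y$.

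\emph{Right-hand side.} By \eqref{Def-MLaguerre} with $\alpha$ replaced by $-\beta-\langle\mathbf{n}\rangle$,
\[
L_{\mathbf{n}}^{(-\beta-\langle\mathbf{n}\rangle)}(\mathbf{x})=\frac{(1-\beta-\langle\mathbf{n}\rangle)_{\langle\mathbf{n}\rangle}}{\mathbf{n}!}\sum_{\mathbf{q}}\frac{(-n_1)_{q_1}\cdots(-n_k)_{q_k}}{(1-\beta-\langle\mathbf{n}\rangle)_{\langle\mathbf{q}\rangle}}\frac{\mathbf{x}^{\mathbf{q}}}{\mathbf{q}!}.
\]
We simplify with the standard identities
\[
(1-\beta-N)_N=(-1)^N(\beta)_N,
\qquad
\frac{(1-\beta-N)_N}{(1-\beta-N)_Q}=(1-\beta-N+Q)_{N-Q}=(-1)^{N-Q}(\beta)_{N-Q},
\]
where $N=\langle\mathbf{n}\rangle$ and $Q=\langle\mathbf{q}\rangle$, together with $(-n_j)_{q_j}/n_j!=(-1)^{q_j}/(n_j-q_j)!$. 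After these substitutions the term in $\mathbf{q}$ becomes $(-1)^{N}(\beta)_{N-Q}\,\mathbf{x}^{\mathbf{q}}/\big((\mathbf{n}-\mathbf{q})!\,\mathbf{q}!\big)$: the signs $(-1)^{N-Q}(-1)^{Q}=(-1)^N$ combine. This matches the left-hand coefficient exactly.

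The hypothesis $-\beta\notin\mathbb{Z}_{\le0}$ is used here. It guarantees $(1-\beta-N)_Q\neq0$ for $Q\le N$, so the $\Phi_2^{(k)}$ expression is well defined; without it one would have to interpret $L^{(\cdot)}$ by the generating function instead.

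\emph{The case $\alpha=\gamma$.} Then $\Phi_1[\gamma,\beta;\gamma;X,Y]=\sum_{m}(\beta)_m X^m/m!\cdot\sum_{\ell}Y^\ell/\ell!=(1-X)^{-\beta}\mathrm{e}^{Y}$. Substituting $X=-\langle\mathbf{u}\rangle$ and $Y=-\langle\mathbf{u}\circ\mathbf{x}\rangle$ gives the left-hand side of \eqref{Prop-1-2}.

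The only delicate step is the sign and Pochhammer bookkeeping in converting $(1-\beta-N)_N/(1-\beta-N)_Q$; I would check it against the case $k=1$, which must reproduce \eqref{GF-AbdulHalim-AlSalam}. The remaining steps are routine.
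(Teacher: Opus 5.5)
Your argument is correct and is essentially the paper's proof run in the opposite direction. The paper inserts the explicit form of $L_{\mathbf{n}}^{(-\beta-\langle\mathbf{n}\rangle)}(\mathbf{x})$, shifts $\mathbf{n}\to\mathbf{n}+\mathbf{j}$, and collapses the resulting $2k$-fold series into $\Phi_1$ via $\sum_{\mathbf{n}}f(\langle\mathbf{n}\rangle)\mathbf{x}^{\mathbf{n}}/\mathbf{n}!=\sum_{n}f(n)\langle\mathbf{x}\rangle^{n}/n!$. That identity is exactly your multinomial expansion of the left-hand side.

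Your remark on the hypothesis needs correcting. The condition $-\beta\notin\mathbb{Z}_{\le0}$ means $\beta\notin\mathbb{Z}_{\ge0}$, so it does not prevent $(1-\beta-N)_Q=0$. That vanishing happens exactly for $\beta\in\{1-N,\dots,Q-N\}\subset\mathbb{Z}_{\le0}$, for example $\beta=-1$, $N=2$, $Q=1$. The fix is to read the ratio as the polynomial $(1-\beta-N+Q)_{N-Q}$. This is what expanding the generating function \eqref{GF-MLaguerre} gives directly, so your coefficient identity in fact holds for every $\beta$.
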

\begin{proof}
	Since
	\[
	L_{\mathbf{n}}^{(-\beta-\langle\mathbf{n}\rangle)}(\mathbf{x})
	=(-1)^{\langle\mathbf{n}\rangle}\frac{(\beta)_{\langle\mathbf{n}\rangle}}{\mathbf{n}!}
	\sum_{\mathbf{j}=\mathbf{0}}^{\mathbf{n}}
	\frac{(-n_1)_{j_1}\cdots (-n_k)_{j_k}}{(1-\beta-\langle\mathbf{n}\rangle)_{\langle\mathbf{j}\rangle}}\frac{\mathbf{x}^{\mathbf{j}}}{\mathbf{j}!},
	\]
	it follows therefore that
	\begin{align}
		\sum_{\mathbf{n}=\mathbf{0}}^{\infty}
		\frac{(\alpha)_{\langle \mathbf{n}\rangle}}{(\gamma)_{\langle \mathbf{n}\rangle}}
		L_{\mathbf{n}}^{(-\beta-\langle \mathbf{n}\rangle)}(\mathbf{x})
		\mathbf{u}^{\mathbf{n}}
		&=\sum_{\mathbf{n}=\mathbf{0}}^{\infty}
		(-\mathbf{u})^{\mathbf{n}}
		\frac{(\alpha)_{\langle \mathbf{n}\rangle}}{(\gamma)_{\langle \mathbf{n}\rangle}}\frac{(\beta)_{\langle\mathbf{n}\rangle}}{\mathbf{n}!}
		\sum_{\mathbf{j}=\mathbf{0}}^{\mathbf{n}}
		\frac{(-n_1)_{j_1}\cdots (-n_k)_{j_k}}{(1-\beta-\langle\mathbf{n}\rangle)_{\langle\mathbf{j}\rangle}}\frac{\mathbf{x}^{\mathbf{j}}}{\mathbf{j}!}\notag\\
		&=\sum_{\mathbf{n},\mathbf{j}=\mathbf{0}}^{\infty}(-\mathbf{u})^{\mathbf{n}+\mathbf{j}}
		\frac{(\alpha)_{\langle \mathbf{n}\rangle+\langle \mathbf{j}\rangle}}{(\gamma)_{\langle \mathbf{n}\rangle+\langle \mathbf{j}\rangle}}\frac{(\beta)_{\langle \mathbf{n}\rangle+\langle \mathbf{j}\rangle}}{(\mathbf{n}+\mathbf{j})!}
		\frac{(-n_1-j_1)_{j_1}\cdots (-n_k-j_k)_{j_k}}{(1-\beta-\langle\mathbf{n}\rangle-\langle\mathbf{j}\rangle)_{\langle\mathbf{j}\rangle}}\frac{\mathbf{x}^{\mathbf{j}}}{\mathbf{j}!}\notag\\
		&=\sum_{\mathbf{n},\mathbf{j}=\mathbf{0}}^{\infty}\frac{(\alpha)_{\langle \mathbf{n}\rangle+\langle \mathbf{j}\rangle}(\beta)_{\langle \mathbf{n}\rangle}}{(\gamma)_{\langle \mathbf{n}\rangle+\langle \mathbf{j}\rangle}}
		\frac{(-\mathbf{u})^{\mathbf{n}}}{\mathbf{n}!}
		\frac{(-\mathbf{u}\circ\mathbf{x})^{\mathbf{j}}}{\mathbf{j}!}.
		\label{Prop-1-Proof-2}
	\end{align}
	The case $k = 1$ (i.e., the one-dimensional case) in \eqref{Prop-1-Proof-2} is trivial. However, when $k\geq 2$, the situation becomes somewhat different. 
	
	Recall that (\cite[p. 19, Eq. (2.4)]{Srivastava-1984})
	\[
	\sum_{\mathbf{n}=0}^{\infty}f(\langle\mathbf{n}\rangle)
	\frac{\mathbf{x}^{\mathbf{n}}}{\mathbf{n}!}
	=\sum_{n=0}^{\infty}f(n)
	\frac{\langle\mathbf{x}\rangle^n}{n!},
	\]
	then the multiple series in \eqref{Prop-1-Proof-2} can be further simplified.
	
	We have
	\begin{align}
		\sum_{\mathbf{n},\mathbf{j}=\mathbf{0}}^{\infty}
		\frac{(\alpha)_{\langle \mathbf{n}\rangle+\langle \mathbf{j}\rangle}(\beta)_{\langle \mathbf{n}\rangle}}{(\gamma)_{\langle \mathbf{n}\rangle+\langle \mathbf{j}\rangle}}
		\frac{(-\mathbf{u})^{\mathbf{n}}}{\mathbf{n}!}
		\frac{(-\mathbf{u}\circ\mathbf{x})^{\mathbf{j}}}{\mathbf{j}!}
		&=\sum_{\mathbf{j}=\mathbf{0}}^{\infty}
		\frac{(\alpha)_{\langle \mathbf{j}\rangle}}{(\gamma)_{\langle \mathbf{j}\rangle}}
		\frac{(-\mathbf{u}\circ\mathbf{x})^{\mathbf{j}}}{\mathbf{j}!}
		\sum_{\mathbf{n}=\mathbf{0}}^{\infty}
		\frac{(\alpha+\langle \mathbf{j}\rangle)_{\langle \mathbf{n}\rangle}(\beta)_{\langle \mathbf{n}\rangle}}{(\gamma+\langle \mathbf{j}\rangle)_{\langle \mathbf{n}\rangle}}
		\frac{(-\mathbf{u})^{\mathbf{n}}}{\mathbf{n}!}\notag\\
		&=\sum_{\mathbf{j}=\mathbf{0}}^{\infty}
		\frac{(\alpha)_{\langle \mathbf{j}\rangle}}{(\gamma)_{\langle \mathbf{j}\rangle}}
		\frac{(-\mathbf{u}\circ\mathbf{x})^{\mathbf{j}}}{\mathbf{j}!}
		\sum_{n=0}^{\infty}
		\frac{(\alpha+\langle \mathbf{j}\rangle)_{n}(\beta)_{n}}{(\gamma+\langle \mathbf{j}\rangle)_{n}}
		\frac{(-\langle\mathbf{u}\rangle)^n}{n!}\notag\\
		&=\sum_{\mathbf{j}=\mathbf{0}}^{\infty}
		\frac{(\alpha)_{\langle \mathbf{j}\rangle}}{(\gamma)_{\langle \mathbf{j}\rangle}}
		\frac{(-\mathbf{u}\circ\mathbf{x})^{\mathbf{j}}}{\mathbf{j}!}
		{}_{2}F_{1}\left[\begin{matrix}
			\alpha+\langle \mathbf{j}\rangle, \beta\\
			\gamma+\langle \mathbf{j}\rangle
		\end{matrix};-\langle\mathbf{u}\rangle\right]\notag\\
		&=\sum_{j=0}^{\infty}
		\frac{(\alpha)_{j}}{(\gamma)_{j}}
		\frac{(-\langle\mathbf{u}\circ\mathbf{x}\rangle)^{j}}{j!}
		{}_{2}F_{1}\left[\begin{matrix}
			\alpha+j, \beta\\
			\gamma+j
		\end{matrix};-\langle\mathbf{u}\rangle\right]\notag\\
		&=\Phi_1\left[\alpha,\beta;\gamma;-\langle\mathbf{u}\rangle,-\langle\mathbf{u}\circ\mathbf{x}\rangle\right].\label{Prop-1-Proof-3}
	\end{align}
	This evidently completes the proof of \eqref{Prop-1-1}. The formula \eqref{Prop-1-2} follows immediately from the reduction formula
	\[
	\Phi_1[a,b;a;x,y]=(1-x)^{-b}\mathrm{e}^{y}. 
	\]
\end{proof}

\emph{Alternative proofs of \eqref{Prop-1-1} and \eqref{Prop-1-2}.} Recall the contour integral representation (see \cite[p. 45, Eq. (12.3)]{Erdelyi-1937}; see also \cite[p. 352, Corollary 2]{Ozarslan-2014})
\begin{equation}\label{ContourIR-MLaguerre}
	L_{\mathbf{n}}^{(\alpha)}(\mathbf{x})=\frac{\Gamma(\alpha+1+\langle\mathbf{n}\rangle)}{2\pi\mathrm{i}\,  \mathbf{n}!}\int_{-\infty}^{(0+)}\mathrm{e}^s s^{-\alpha-1}\left(1-\frac{x_1}{s}\right)^{n_1}
	\cdots 
	\left(1-\frac{x_k}{s}\right)^{n_k}\mathrm{d}s,
\end{equation}
where $\alpha+1+\langle\mathbf{n}\rangle\notin\mathbb{Z}_{\leq0}$, $\int_{-\infty}^{(0+)}$ is a loop starting at $-\infty$, encircling the point $0$ in the positive sense, and returning to $-\infty$. 
Both \eqref{Prop-1-1} and \eqref{Prop-1-2} can be derived by replacing $L_{\mathbf{n}}^{(\alpha)}(\mathbf{x})$ on the right-hand side with its contour integral representation \eqref{ContourIR-MLaguerre}, and interchanging the order of summation and integration, and then evaluating the resulting integral. For example,  
\begin{align*}
	\sum_{\mathbf{n}=\mathbf{0}}^{\infty}
	L_{\mathbf{n}}^{(-\beta-\langle \mathbf{n}\rangle)}(\mathbf{x})
	\mathbf{u}^{\mathbf{n}}
	&=\frac{\Gamma(1-\beta)}{2\pi\mathrm{i}}
	\sum_{\mathbf{n}=\mathbf{0}}^{\infty}
	\frac{\mathbf{u}^{\mathbf{n}}}{\mathbf{n}!}\int_{-\infty}^{(0+)}\mathrm{e}^s s^{\beta+\langle \mathbf{n}\rangle-1}\left(1-\frac{x_1}{s}\right)^{n_1}
	\cdots 
	\left(1-\frac{x_k}{s}\right)^{n_k}\mathrm{d}s\\
	&=\frac{\Gamma(1-\beta)}{2\pi\mathrm{i}}
	\int_{-\infty}^{(0+)}\mathrm{e}^s s^{\beta-1}
	\left[\sum_{\mathbf{n}=\mathbf{0}}^{\infty}
	\frac{\mathbf{u}^{\mathbf{n}}}{\mathbf{n}!}
	(s-x_1)^{n_1}\cdots (s-x_k)^{n_k}\right]\mathrm{d}s\\
	&=\mathrm{e}^{-\langle\mathbf{u}\circ\mathbf{x}\rangle}\cdot\frac{\Gamma(1-\beta)}{2\pi\mathrm{i}}
	\int_{-\infty}^{(0+)}\mathrm{e}^{(1+\langle\mathbf{u}\rangle)s} s^{\beta-1}
	\mathrm{d}s=\mathrm{e}^{-\langle\mathbf{u}\circ \mathbf{x}\rangle}(1+\langle\mathbf{u}\rangle)^{-\beta},
\end{align*}
where we have used the following contour integral representation for the reciprocal gamma function \cite[p. 139, Eq. (5.9.2)]{NIST Handbook}:
\[
\frac{1}{\Gamma(z)}=\frac{1}{2\pi\mathrm{i}}\int_{-\infty}^{(0+)}\mathrm{e}^{s}s^{-z}\mathrm{d}s.
\]

It may be pointed out here that \eqref{Prop-1-1} can also be derived by suitably integrating \eqref{Prop-1-2}. In fact, by letting $\mathbf{u}=(u_1,\cdots,u_k)\rightarrow (tu_1,\cdots,tu_k)=t\mathbf{u}$ $(t\in(0,1))$ in \eqref{Prop-1-2}, we have then
\begin{equation}\label{Prop-1-Proof-4}
	\mathrm{e}^{-t\langle \mathbf{u}\circ \mathbf{x}\rangle}(1+t\langle \mathbf{u}\rangle)^{-\beta}
	=\sum_{\mathbf{n}=\mathbf{0}}^{\infty}
	L_{\mathbf{n}}^{(-\beta-\langle \mathbf{n}\rangle)}(\mathbf{x})
	\mathbf{u}^{\mathbf{n}}t^{\langle\mathbf{n}\rangle}.
\end{equation}
Integrating the above identity with respect to the measure
\[
\mathrm{d}\mu_{\alpha,\gamma-\alpha}(t):=\frac{\Gamma(\gamma)}{\Gamma(\alpha)\Gamma(\gamma-\alpha)}t^{\alpha-1}(1-t)^{\gamma-\alpha-1}\mathrm{d}t~~~
(\Re(\gamma)>\Re(\alpha)>0)
\]
over $(0,1)$, we obtain
\begin{equation}\label{Prop-1-Proof-5}
	\int_0^1\mathrm{e}^{-t\langle \mathbf{u}\circ \mathbf{x}\rangle}(1+t\langle \mathbf{u}\rangle)^{-\beta}
	\mathrm{d}\mu_{\alpha,\gamma-\alpha}(t)
	=\sum_{\mathbf{n}=\mathbf{0}}^{\infty}
	\frac{(\alpha)_{\langle\mathbf{n}\rangle}}{(\gamma)_{\langle\mathbf{n}\rangle}}
	L_{\mathbf{n}}^{(-\beta-\langle \mathbf{n}\rangle)}(\mathbf{x})
	\mathbf{u}^{\mathbf{n}}.
\end{equation}
The left-hand side of \eqref{Prop-1-Proof-5} can at once be interpreted as the $\Phi_1$ function in view of the integral representation \cite[p. 3, Eq. (2.3)]{Hang-Hu-Luo-2026}:
\begin{equation}\label{Prop-1-Proof-6}
	\Phi_1[\alpha,\beta,\gamma;x,y]=\int_0^1(1-xt)^{-\beta}\mathrm{e}^{yt}\mathrm{d}\mu_{\alpha,\gamma-\alpha}(t), ~~~ \Re(\gamma)>\Re(\alpha)>0.
\end{equation}

\subsection{Special cases}

An easy but elegant result can be obtained by using the decomposition technique \cite[Chapter 3]{Srivastava-Manocha-Book-1984}. 
\begin{corollary}\label{Prop-1-Cor-0}
	Let $-\beta\in\mathbb{R}\setminus\mathbb{Z}_{\leq 0}$ and $r_j>0$ $(j=1,\cdots,k)$ with $\langle\mathbf{r}\rangle<1$. Also, let $\mathbf{x}\in\mathbb{R}^k$ and $\theta_j\in(-\pi,\pi]$ $(j=1,\cdots,k)$. We have
	\begin{align}\label{Prop-1-Cor-0-1}
		&\sum_{\mathbf{n}=\mathbf{0}}^{\infty}
		L_{\mathbf{n}}^{(-\beta-\langle \mathbf{n}\rangle)}(\mathbf{x})
		\mathbf{r}^{\mathbf{n}}
		\cos\langle\mathbf{n}\circ\bm{\theta}\rangle\notag\\
		&\hspace{0.5cm}=(S(\bm{\theta}))^{-\beta}\exp\left(-\sum_{j=1}^{k}x_jr_j\cos\theta_j\right)
		\cos\left(\sum_{j=1}^{k}x_jr_j\sin\theta_j+\beta\Phi(\bm{\theta})\right)
	\end{align}
	and
	\begin{align}\label{Prop-1-Cor-0-2}
		&\sum_{\mathbf{n}=\mathbf{0}}^{\infty}
		L_{\mathbf{n}}^{(-\beta-\langle \mathbf{n}\rangle)}(\mathbf{x})
		\mathbf{r}^{\mathbf{n}}
		\sin\langle\mathbf{n}\circ\bm{\theta}\rangle\notag\\
		&\hspace{0.5cm}=-(S(\bm{\theta}))^{-\beta}\exp\left(-\sum_{j=1}^{k}x_jr_j\cos\theta_j\right)
		\sin\left(\sum_{j=1}^{k}x_j\sin\theta_j+\beta\Phi(\bm{\theta})\right),
	\end{align}
	where
	\begin{equation}\label{Prop-1-Cor-0-3}
		S(\bm{\theta}):=\left|1+\sum_{j=1}^{k}r_j\mathrm{e}^{\mathrm{i}\theta_j}\right|,\quad 
		\Phi(\bm{\theta}):=\arg\left(1+\sum_{j=1}^{k}r_j\mathrm{e}^{\mathrm{i}\theta_j}\right).
	\end{equation}
\end{corollary}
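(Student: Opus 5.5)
The plan is to specialize the generating function \eqref{Prop-1-2} of Proposition \ref{Prop-1} to complex arguments on polydisc circles and then separate real and imaginary parts; this is the decomposition technique referred to above. Concretely, I will put $u_j=r_j\mathrm{e}^{\mathrm{i}\theta_j}$ for $j=1,\cdots,k$. Since $|u_1|+\cdots+|u_k|=\langle\mathbf{r}\rangle<1$, the hypothesis of Proposition \ref{Prop-1} holds. Hence \eqref{Prop-1-2} is valid and its right-hand side converges absolutely. Because $\mathbf{u}^{\mathbf{n}}=\mathbf{r}^{\mathbf{n}}\mathrm{e}^{\mathrm{i}\langle\mathbf{n}\circ\bm{\theta}\rangle}$, the right-hand side becomes
\[
\sum_{\mathbf{n}=\mathbf{0}}^{\infty}L_{\mathbf{n}}^{(-\beta-\langle\mathbf{n}\rangle)}(\mathbf{x})\,\mathbf{r}^{\mathbf{n}}\bigl(\cos\langle\mathbf{n}\circ\bm{\theta}\rangle+\mathrm{i}\sin\langle\mathbf{n}\circ\bm{\theta}\rangle\bigr).
\]

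Next I must check that the coefficients are real, so that taking real and imaginary parts acts termwise. From the explicit finite sum for $L_{\mathbf{n}}^{(-\beta-\langle\mathbf{n}\rangle)}(\mathbf{x})$ used at the start of the proof of Proposition \ref{Prop-1}, this is a polynomial in $\mathbf{x}$ whose coefficients are rational expressions in $\beta$ with real coefficients. The denominators $(1-\beta-\langle\mathbf{n}\rangle)_{\langle\mathbf{j}\rangle}$ are nonzero because $-\beta\notin\mathbb{Z}_{\leq0}$. Hence for real $\beta$ and $\mathbf{x}\in\mathbb{R}^k$ every $L_{\mathbf{n}}^{(-\beta-\langle\mathbf{n}\rangle)}(\mathbf{x})$ is real. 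Absolute convergence then allows the real and imaginary parts of the series to be taken termwise.

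For the left-hand side, set $w=\sum_j r_j\mathrm{e}^{\mathrm{i}\theta_j}$, so that $|w|<1$. Then $\Re(1+w)>0$, and the principal branch in $(1+\langle\mathbf{u}\rangle)^{-\beta}$ (the one used in \eqref{Prop-1-2}, real at $\mathbf{u}=\mathbf{0}$) gives $(1+w)^{-\beta}=S(\bm{\theta})^{-\beta}\mathrm{e}^{-\mathrm{i}\beta\Phi(\bm{\theta})}$ with $\Phi(\bm{\theta})\in(-\pi/2,\pi/2)$ as in \eqref{Prop-1-Cor-0-3}. Also
\[
\mathrm{e}^{-\langle\mathbf{u}\circ\mathbf{x}\rangle}=\exp\Bigl(-\sum_j x_jr_j\cos\theta_j\Bigr)\exp\Bigl(-\mathrm{i}\sum_j x_jr_j\sin\theta_j\Bigr).
\]
Multiplying, the left-hand side equals
\[
S(\bm{\theta})^{-\beta}\exp\Bigl(-\sum_j x_jr_j\cos\theta_j\Bigr)\exp\Bigl(-\mathrm{i}\Bigl(\sum_j x_jr_j\sin\theta_j+\beta\Phi(\bm{\theta})\Bigr)\Bigr).
\]
Its real part is the right-hand side of \eqref{Prop-1-Cor-0-1}, and its imaginary part is $-S^{-\beta}\exp(\cdots)\sin(\cdots)$. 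Comparing real parts gives \eqref{Prop-1-Cor-0-1}, and comparing imaginary parts gives \eqref{Prop-1-Cor-0-2}. In \eqref{Prop-1-Cor-0-2} the sine's argument must read $\sum_j x_jr_j\sin\theta_j$, so the missing $r_j$ there is evidently a misprint.

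I expect the only delicate step to be the branch bookkeeping. One must confirm that the branch of $(1+\langle\mathbf{u}\rangle)^{-\beta}$ in \eqref{Prop-1-2} is the principal one, and that $\arg$ in \eqref{Prop-1-Cor-0-3} is the principal argument. This holds because $1+w$ stays in the right half-plane throughout $|w|<1$, so $\Phi$ is continuous and no ambiguity arises.
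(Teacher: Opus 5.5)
Your proposal is correct and follows the paper's proof. You substitute $u_j=r_j\mathrm{e}^{\mathrm{i}\theta_j}$ in \eqref{Prop-1-2}, write $\bigl(1+\sum_{j}r_j\mathrm{e}^{\mathrm{i}\theta_j}\bigr)^{-\beta}=S(\bm{\theta})^{-\beta}\mathrm{e}^{-\mathrm{i}\beta\Phi(\bm{\theta})}$, and compare real and imaginary parts. You are also right that the sine in \eqref{Prop-1-Cor-0-2} should have argument $\sum_j x_jr_j\sin\theta_j+\beta\Phi(\bm{\theta})$, as the paper's own intermediate computation shows. One small slip is in your justification that the coefficients are real. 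The denominators $(1-\beta-\langle\mathbf{n}\rangle)_{\langle\mathbf{j}\rangle}$ vanish for negative integers $\beta$ (e.g.\ $\beta=-1$ with $\langle\mathbf{n}\rangle=2$ and $\langle\mathbf{j}\rangle\geq1$), and the hypothesis $-\beta\notin\mathbb{Z}_{\leq0}$ allows such $\beta$. Reality is more cleanly read off the denominator-free form $L_{\mathbf{n}}^{(-\beta-\langle\mathbf{n}\rangle)}(\mathbf{x})=(-1)^{\langle\mathbf{n}\rangle}\sum_{\mathbf{m}=\mathbf{0}}^{\mathbf{n}}\frac{(\beta)_{\langle\mathbf{n}\rangle-\langle\mathbf{m}\rangle}}{(\mathbf{n}-\mathbf{m})!\,\mathbf{m}!}\mathbf{x}^{\mathbf{m}}$, the coefficient of $\mathbf{u}^{\mathbf{n}}$ in \eqref{Prop-1-2}, which is real for every real $\beta$ and $\mathbf{x}\in\mathbb{R}^k$.
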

\begin{proof}
	Letting $u_j=r_j\mathrm{e}^{\mathrm{i}\theta_j}$ $(j=1,\cdots,k)$ in \eqref{Prop-1-2}, we obtain
	\begin{align}\label{Prop-1-Cor-0-Proof-1}
		\exp\left(-\sum_{j=1}^{k}x_j r_j \mathrm{e}^{\mathrm{i}\theta_j}\right)\left(1+\sum_{j=1}^{k}r_j\mathrm{e}^{\mathrm{i}\theta_j}\right)^{-\beta}
		&=\sum_{\mathbf{n}=\mathbf{0}}^{\infty}
		L_{\mathbf{n}}^{(-\beta-\langle \mathbf{n}\rangle)}(\mathbf{x})
		\mathbf{r}^{\mathbf{n}}\mathrm{e}^{\mathrm{i}\langle\mathbf{n}\circ\bm{\theta}\rangle}\notag\\ 
		&=\sum_{\mathbf{n}=\mathbf{0}}^{\infty}
		L_{\mathbf{n}}^{(-\beta-\langle \mathbf{n}\rangle)}(\mathbf{x})
		\mathbf{r}^{\mathbf{n}}(\cos\langle\mathbf{n}\circ\bm{\theta}\rangle+\mathrm{i}\sin\langle\mathbf{n}\circ\bm{\theta}\rangle).
	\end{align}
	If we write
	\begin{equation}
		\left(1+\sum_{j=1}^{k}r_j\mathrm{e}^{\mathrm{i}\theta_j}\right)^{-\beta}
		=(S(\bm{\theta}))^{-\beta}\exp\left(-\mathrm{i}\beta\Phi(\bm{\theta})\right), 
	\end{equation}	
	then the expression on the left side of \eqref{Prop-1-Cor-0-Proof-1} gives
	\begin{align}\label{Prop-1-Cor-0-Proof-2}
		&\exp\left(-\sum_{j=1}^{k}x_jr_j\mathrm{e}^{\mathrm{i}\theta_j}\right)\left(1+\sum_{j=1}^{k}r_j\mathrm{e}^{\mathrm{i}\theta_j}\right)^{-\beta}\notag\\
		&\hspace{0.5cm}=(S(\bm{\theta}))^{-\beta}\exp\left(-\sum_{j=1}^{k}x_jr_j\cos\theta_j\right)\exp\left(-\mathrm{i}\sum_{j=1}^{k}x_jr_j\sin\theta_j-\mathrm{i}\beta\Phi(\bm{\theta})\right).
	\end{align}
	Using \eqref{Prop-1-Cor-0-Proof-2} in \eqref{Prop-1-Cor-0-Proof-1} and taking the real and imaginary parts on both sides of the resulting equation, we obtain the desired results \eqref{Prop-1-Cor-0-1} and \eqref{Prop-1-Cor-0-2}, respectively.
\end{proof}

In particular, when $k=1$ and $r=1$ in \eqref{Prop-1-Cor-0-3}, we have 
\[
S(\theta)=2\cos\frac{\theta}{2},\quad \Phi(\theta)=\frac{\theta}{2}, 
\]
and consequently we obtain therefore from the results \eqref{Prop-1-Cor-0-1} and \eqref{Prop-1-Cor-0-2} of Corollary \ref{Prop-1-Cor-0}, the following known familiar results \cite[p. 209, Eqs. (11) and (12)]{Srivastava-Manocha-Book-1984}:
\begin{equation}\label{Prop-1-Cor-0-4}
	\sum_{n=0}^{\infty}
	L_{n}^{(-\beta-n)}(x)
	\cos n\theta
	=\left(2\cos\frac{\theta}{2}\right)^{-\beta}\mathrm{e}^{-x\cos\theta}\cos\left(x\sin\theta+\frac{\beta\theta}{2}\right)
\end{equation}
and
\begin{equation}\label{Prop-1-Cor-0-5}
	\sum_{n=0}^{\infty}
	L_{n}^{(-\beta-n)}(x)
	\sin n\theta
	=-\left(2\cos\frac{\theta}{2}\right)^{-\beta}\mathrm{e}^{-x\cos\theta}\sin\left(x\sin\theta+\frac{\beta\theta}{2}\right).
\end{equation}

We deem it appropriate to examine the convergence analysis for \eqref{Prop-1-Cor-0-4} and \eqref{Prop-1-Cor-0-5} in order to justify the setting of the substitution $r=1$ (see above), as this is not addressed in \cite{Srivastava-Manocha-Book-1984}. Since
\[
L_n^{(-\beta-n)}(z)=(-1)^n\frac{(\beta)_n}{n!} {}_{1}F_{1}\left[\begin{matrix}
	-n\\
	1-\beta-n
\end{matrix};x\right],
\]
Tannery's theorem \cite[p. 243]{Wang-2010} gives immediately 
\[
L_n^{(-\beta-n)}(x)\sim (-1)^{n}\frac{(\beta)_n}{n!}\mathrm{e}^{x}, \quad n\rightarrow+\infty,
\]
and therefore both series \eqref{Prop-1-Cor-0-4} and \eqref{Prop-1-Cor-0-5} converge for all $\theta\in(-\pi,\pi]$ under the condition that $\beta<0$. A natural question is whether one can set $\mathbf{r}=\mathbf{1}$ in the Corollary \ref{Prop-1-Cor-0} as was done to obtain the results \eqref{Prop-1-Cor-0-3} and \eqref{Prop-1-Cor-0-4}. This may require an accurate estimate for $L_{\mathbf{n}}^{(-\beta-\langle\mathbf{n}\rangle)}(\mathbf{x})$. A more particular and specific problem is given below in Section \ref{Sec-ConcludingRemarks}.

\subsection{Some applications}

A simple application of \eqref{Prop-1-2} yields the multiplication formula, which generalizes a formula of Carlitz (see \cite[p. 223]{Carlitz-1960}). 
\begin{corollary}\label{Prop-1-Cor-1}
	Let $\lambda\in\mathbb{C}$, $-\beta\in\mathbb{C}\setminus\mathbb{Z}_{\leq0}$ and $\mathbf{x}\in\mathbb{C}^k$. Then we have
	\begin{equation}\label{Prop-1-Cor-1-1}
		L_{\mathbf{m}}^{(-\beta-\langle \mathbf{m}\rangle)}(\lambda\mathbf{x})
		=\sum_{\mathbf{n}=\mathbf{0}}^{\mathbf{m}}\frac{(1-\lambda)^{\langle \mathbf{m}\rangle-\langle\mathbf{n}\rangle}\mathbf{x}^{\mathbf{m}-\mathbf{n}}}{(\mathbf{m}-\mathbf{n})!}L_{\mathbf{n}}^{(-\beta-\langle \mathbf{n}\rangle)}(\mathbf{x}).
	\end{equation}
\end{corollary}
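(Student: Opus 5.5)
We will use the generating function \eqref{Prop-1-2} of Proposition \ref{Prop-1} twice and then compare coefficients. First we replace $\mathbf{x}$ by $\lambda\mathbf{x}$ in \eqref{Prop-1-2}:
\[
\mathrm{e}^{-\lambda\langle\mathbf{u}\circ\mathbf{x}\rangle}(1+\langle\mathbf{u}\rangle)^{-\beta}
=\sum_{\mathbf{m}=\mathbf{0}}^{\infty}L_{\mathbf{m}}^{(-\beta-\langle\mathbf{m}\rangle)}(\lambda\mathbf{x})\,\mathbf{u}^{\mathbf{m}},\qquad |u_1|+\cdots+|u_k|<1 .
\]
Next we split the exponential factor as
\[
\mathrm{e}^{-\lambda\langle\mathbf{u}\circ\mathbf{x}\rangle}=\mathrm{e}^{(1-\lambda)\langle\mathbf{u}\circ\mathbf{x}\rangle}\cdot\mathrm{e}^{-\langle\mathbf{u}\circ\mathbf{x}\rangle}.
\]
The factor $\mathrm{e}^{-\langle\mathbf{u}\circ\mathbf{x}\rangle}(1+\langle\mathbf{u}\rangle)^{-\beta}$ is exactly the left-hand side of \eqref{Prop-1-2}, so we replace it by $\sum_{\mathbf{n}}L_{\mathbf{n}}^{(-\beta-\langle\mathbf{n}\rangle)}(\mathbf{x})\mathbf{u}^{\mathbf{n}}$.

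The remaining factor factorizes coordinatewise and expands as an entire multiple power series:
\[
\mathrm{e}^{(1-\lambda)\langle\mathbf{u}\circ\mathbf{x}\rangle}=\prod_{j=1}^{k}\mathrm{e}^{(1-\lambda)x_ju_j}=\sum_{\mathbf{p}=\mathbf{0}}^{\infty}\frac{(1-\lambda)^{\langle\mathbf{p}\rangle}\mathbf{x}^{\mathbf{p}}}{\mathbf{p}!}\mathbf{u}^{\mathbf{p}}.
\]
Both series converge absolutely in the polydisc-type region $|u_1|+\cdots+|u_k|<1$: the exponential series converges everywhere, and the other is a convergent power series there. Hence the Cauchy product of multiple series is legitimate and can be rearranged by setting $\mathbf{m}=\mathbf{n}+\mathbf{p}$:
\[
\sum_{\mathbf{m}=\mathbf{0}}^{\infty}\mathbf{u}^{\mathbf{m}}\sum_{\mathbf{n}=\mathbf{0}}^{\mathbf{m}}\frac{(1-\lambda)^{\langle\mathbf{m}\rangle-\langle\mathbf{n}\rangle}\mathbf{x}^{\mathbf{m}-\mathbf{n}}}{(\mathbf{m}-\mathbf{n})!}L_{\mathbf{n}}^{(-\beta-\langle\mathbf{n}\rangle)}(\mathbf{x}).
\]

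Finally, two power series in $\mathbf{u}$ that agree on an open neighbourhood of the origin have equal coefficients, and comparing the coefficients of $\mathbf{u}^{\mathbf{m}}$ gives \eqref{Prop-1-Cor-1-1}. The only point needing care is this justification of the multiple Cauchy product and coefficient identification. It follows from absolute convergence near $\mathbf{u}=\mathbf{0}$, and we can restrict to small $|\mathbf{u}|$ if needed. The hypothesis $-\beta\notin\mathbb{Z}_{\le0}$ is inherited from Proposition \ref{Prop-1}. Setting $k=1$ recovers Carlitz's formula, which serves as a check.
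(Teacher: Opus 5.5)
Your proposal is correct and follows essentially the same route as the paper. The paper also replaces $\mathbf{x}$ by $\lambda\mathbf{x}$ in \eqref{Prop-1-2}, writes $\mathrm{e}^{-\lambda\langle\mathbf{u}\circ\mathbf{x}\rangle}=\mathrm{e}^{(1-\lambda)\langle\mathbf{u}\circ\mathbf{x}\rangle}\mathrm{e}^{-\langle\mathbf{u}\circ\mathbf{x}\rangle}$, forms the Cauchy product with the exponential series and compares coefficients of $\mathbf{u}^{\mathbf{m}}$; your remarks on absolute convergence and coefficient identification only make explicit what the paper leaves implicit.
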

\begin{proof}
	Note that
	\begin{align*}
		\sum_{\mathbf{m}=\mathbf{0}}^{\infty}
		L_{\mathbf{m}}^{(-\beta-\langle \mathbf{m}\rangle)}(\lambda\mathbf{x})
		\mathbf{u}^{\mathbf{m}}
		&=\mathrm{e}^{-\lambda\langle\mathbf{u}\circ \mathbf{x}\rangle}(1+\langle\mathbf{u}\rangle)^{-\beta}\\
		&=\mathrm{e}^{(1-\lambda)\langle\mathbf{u}\circ \mathbf{x}\rangle}\cdot \mathrm{e}^{-\langle\mathbf{u}\circ \mathbf{x}\rangle}(1+\langle\mathbf{u}\rangle)^{-\beta}\\
		&=\sum_{\mathbf{m}=\mathbf{0}}^{\infty}\sum_{\mathbf{n}=\mathbf{0}}^{\infty}\frac{(1-\lambda)^{\langle \mathbf{m}\rangle}\mathbf{x}^{\mathbf{m}}}{\mathbf{m}!}L_{\mathbf{n}}^{(-\beta-\langle \mathbf{n}\rangle)}(\mathbf{x})\mathbf{u}^{\mathbf{m}+\mathbf{n}}\\
		&=\sum_{\mathbf{m}=\mathbf{0}}^{\infty}\left(\sum_{\mathbf{n}=\mathbf{0}}^{\mathbf{m}}\frac{(1-\lambda)^{\langle \mathbf{m}\rangle-\langle\mathbf{n}\rangle}\mathbf{x}^{\mathbf{m}-\mathbf{n}}}{(\mathbf{m}-\mathbf{n})!}L_{\mathbf{n}}^{(-\beta-\langle \mathbf{n}\rangle)}(\mathbf{x})\right)\mathbf{u}^{\mathbf{m}}.
	\end{align*}
	Then the result follows at once by comparing the coefficients.
\end{proof}

The next corollary is not only more appealing in its form but also possesses significant applications.
\begin{corollary}\label{Prop-1-Cor-2}
	Let $-\beta\in\mathbb{C}\setminus\mathbb{Z}_{\leq0}$, and let $\mathbf{u},\mathbf{x}\in\mathbb{C}^k$ with $|u_1|+\cdots+|u_k|<1$. We have
	\begin{equation}\label{Prop-1-Cor-2-1}
		\sum_{\mathbf{n}=\mathbf{0}}^{\infty}
		L_{\mathbf{n}+\mathbf{m}}^{(-\beta-\langle \mathbf{n}\rangle-\langle\mathbf{m}\rangle)}(\mathbf{x})
		\frac{(\mathbf{n}+\mathbf{m})!}{\mathbf{m}!\mathbf{n}!}\mathbf{u}^{\mathbf{n}}
		=
		\mathrm{e}^{-\langle\mathbf{u}\circ \mathbf{x}\rangle}(1+\langle\mathbf{u}\rangle)^{-\beta-\langle\mathbf{m}\rangle}L_{\mathbf{m}}^{(-\beta-\langle \mathbf{m}\rangle)}\left((1+\langle\mathbf{u}\rangle)\mathbf{x}\right).
	\end{equation}
\end{corollary}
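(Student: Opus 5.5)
The plan is to differentiate the generating function \eqref{Prop-1-2} with respect to $\mathbf{u}$. Write $G(\mathbf{u}):=\mathrm{e}^{-\langle\mathbf{u}\circ\mathbf{x}\rangle}(1+\langle\mathbf{u}\rangle)^{-\beta}=\sum_{\mathbf{N}}L_{\mathbf{N}}^{(-\beta-\langle\mathbf{N}\rangle)}(\mathbf{x})\mathbf{u}^{\mathbf{N}}$. By Taylor's theorem in $k$ variables, $G(\mathbf{u}+\mathbf{v})=\sum_{\mathbf{m}}\frac{\mathbf{v}^{\mathbf{m}}}{\mathbf{m}!}\partial^{\mathbf{m}}G(\mathbf{u})$. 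Term-by-term differentiation of the power series gives
\[
\frac{1}{\mathbf{m}!}\partial^{\mathbf{m}}G(\mathbf{u})=\sum_{\mathbf{n}=\mathbf{0}}^{\infty}\frac{(\mathbf{n}+\mathbf{m})!}{\mathbf{m}!\,\mathbf{n}!}L_{\mathbf{n}+\mathbf{m}}^{(-\beta-\langle\mathbf{n}\rangle-\langle\mathbf{m}\rangle)}(\mathbf{x})\mathbf{u}^{\mathbf{n}},
\]
which is exactly the left-hand side of \eqref{Prop-1-Cor-2-1}. So it suffices to compute the Taylor coefficients of $G(\mathbf{u}+\mathbf{v})$ in $\mathbf{v}$ in closed form.

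For this, factor
\[
G(\mathbf{u}+\mathbf{v})=\mathrm{e}^{-\langle\mathbf{u}\circ\mathbf{x}\rangle}(1+\langle\mathbf{u}\rangle)^{-\beta}\cdot \mathrm{e}^{-\langle\mathbf{v}\circ\mathbf{x}\rangle}\Bigl(1+\frac{\langle\mathbf{v}\rangle}{1+\langle\mathbf{u}\rangle}\Bigr)^{-\beta}.
\]
Then set $\mathbf{w}=\mathbf{v}/(1+\langle\mathbf{u}\rangle)$, so that $\langle\mathbf{v}\circ\mathbf{x}\rangle=\langle\mathbf{w}\circ((1+\langle\mathbf{u}\rangle)\mathbf{x})\rangle$. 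The second factor becomes $\mathrm{e}^{-\langle\mathbf{w}\circ\mathbf{y}\rangle}(1+\langle\mathbf{w}\rangle)^{-\beta}$ with $\mathbf{y}=(1+\langle\mathbf{u}\rangle)\mathbf{x}$. By \eqref{Prop-1-2} this equals
\[
\sum_{\mathbf{m}}L_{\mathbf{m}}^{(-\beta-\langle\mathbf{m}\rangle)}(\mathbf{y})\,\mathbf{w}^{\mathbf{m}}=\sum_{\mathbf{m}}L_{\mathbf{m}}^{(-\beta-\langle\mathbf{m}\rangle)}(\mathbf{y})\,(1+\langle\mathbf{u}\rangle)^{-\langle\mathbf{m}\rangle}\mathbf{v}^{\mathbf{m}}.
\]
Comparing coefficients of $\mathbf{v}^{\mathbf{m}}$ in the two expansions of $G(\mathbf{u}+\mathbf{v})$ then yields \eqref{Prop-1-Cor-2-1}.

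The only delicate point is justification. Both expansions are power series in $\mathbf{v}$ that converge for $\mathbf{v}$ small, with $\mathbf{u}$ fixed and $|u_1|+\cdots+|u_k|<1$. The function $G$ is holomorphic near $\mathbf{u}$, since $1+\langle\mathbf{u}\rangle\neq0$ there and the principal branch is used. The rearrangement into the double series over $(\mathbf{n},\mathbf{m})$ is legitimate by absolute convergence of \eqref{Prop-1-2} in the polydisc $\sum|u_j|<1$, applied at $|u_j|+|v_j|$. Uniqueness of Taylor coefficients then finishes the argument. I expect no real obstacle beyond the bookkeeping of the binomial factor $(\mathbf{n}+\mathbf{m})!/(\mathbf{m}!\,\mathbf{n}!)$, which comes from $\partial^{\mathbf{m}}\mathbf{u}^{\mathbf{N}}$.
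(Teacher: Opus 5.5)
Your proposal is correct and is essentially the paper's own argument (Rainville's method): the paper also replaces $\mathbf{u}$ by $\mathbf{u}+\mathbf{v}$ in \eqref{Prop-1-2}, expands the series side via the binomial expansion of $(\mathbf{u}+\mathbf{v})^{\mathbf{n}}$ (equivalent to your Taylor-coefficient formulation), factors the closed form with $\mathbf{v}/(1+\langle\mathbf{u}\rangle)$ and $(1+\langle\mathbf{u}\rangle)\mathbf{x}$ exactly as you do, and compares coefficients of $\mathbf{v}^{\mathbf{m}}$. Your absolute-convergence justification for $\sum_j(|u_j|+|v_j|)<1$ is more explicit than the paper's one-line remark; one small terminology slip is that this region is the $\ell^1$-ball rather than a polydisc.
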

\begin{proof}
	The proof is based on Rainville's method \cite[pp. 13--15]{MCBride-Book-1971}. First, we replace $\mathbf{u}$ with $\mathbf{u}+\mathbf{v}$ in \eqref{Prop-1-2} to obtain
	\begin{equation}\label{Prop-1-Cor-2-Proof-1}
		\mathrm{e}^{-\langle\mathbf{u}\circ \mathbf{x}\rangle-\langle\mathbf{v}\circ \mathbf{x}\rangle}(1+\langle\mathbf{u}\rangle+\langle\mathbf{v}\rangle)^{-\beta}
		=\sum_{\mathbf{n}=\mathbf{0}}^{\infty}
		L_{\mathbf{n}}^{(-\beta-\langle \mathbf{n}\rangle)}(\mathbf{x})
		(\mathbf{u}+\mathbf{v})^{\mathbf{n}}.
	\end{equation}
	Here, we assume $|u_1|+|v_1|+\cdots+|u_k|+|v_k|<1$ to guarantee convergence. 
	The right hand side of \eqref{Prop-1-Cor-2-Proof-1} can be written as
	\begin{align}\label{Prop-1-Cor-2-Proof-2}
		\sum_{\mathbf{n}=\mathbf{0}}^{\infty}
		L_{\mathbf{n}}^{(-\beta-\langle \mathbf{n}\rangle)}(\mathbf{x})
		(\mathbf{u}+\mathbf{v})^{\mathbf{n}}
		&=\sum_{\mathbf{n}=\mathbf{0}}^{\infty}
		L_{\mathbf{n}}^{(-\beta-\langle \mathbf{n}\rangle)}(\mathbf{x})
		\sum_{\mathbf{m}=\mathbf{0}}^{\mathbf{n}}\frac{\mathbf{n}!}{\mathbf{m}!(\mathbf{n}-\mathbf{m})!}\mathbf{u}^{\mathbf{n}-\mathbf{m}}\mathbf{v}^{\mathbf{m}}\notag\\
		&=\sum_{\mathbf{m}=\mathbf{0}}^{\infty}\left(\sum_{\mathbf{n}=\mathbf{0}}^{\infty}
		L_{\mathbf{n}+\mathbf{m}}^{(-\beta-\langle \mathbf{n}\rangle-\langle\mathbf{m}\rangle)}(\mathbf{x})
		\frac{(\mathbf{n}+\mathbf{m})!}{\mathbf{m}!\mathbf{n}!}\mathbf{u}^{\mathbf{n}}\right)\mathbf{v}^{\mathbf{m}}.
	\end{align}
	The left hand side of \eqref{Prop-1-Cor-2-Proof-1} can be regrouped as
	\begin{align*}
		\mathrm{e}^{-\langle\mathbf{u}\circ \mathbf{x}\rangle-\langle\mathbf{v}\circ \mathbf{x}\rangle}(1+\langle\mathbf{u}\rangle+\langle\mathbf{v}\rangle)^{-\beta}
		&=\mathrm{e}^{-\langle\mathbf{u}\circ \mathbf{x}\rangle}
		(1+\langle\mathbf{u}\rangle)^{-\beta}
		\cdot 
		\mathrm{e}^{-\langle\mathbf{v}\circ \mathbf{x}\rangle}
		\left(1+\frac{\langle\mathbf{v}\rangle}{1+\langle\mathbf{u}\rangle}\right)^{-\beta}\\
		&=\mathrm{e}^{-\langle\mathbf{u}\circ \mathbf{x}\rangle}
		(1+\langle\mathbf{u}\rangle)^{-\beta}
		\cdot 
		\mathrm{e}^{-\big\langle\frac{\mathbf{v}}{1+\langle\mathbf{u}\rangle}\circ((1+\langle\mathbf{u}\rangle)\mathbf{x})\big\rangle}
		\left(1+\left\langle\frac{\mathbf{v}}{1+\langle\mathbf{u}\rangle}\right\rangle\right)^{-\beta},
	\end{align*}
	and thus by using the generating function \eqref{Prop-1-2} again, we have
	\begin{align}\label{Prop-1-Cor-2-Proof-3}
		\mathrm{e}^{-\langle\mathbf{u}\circ \mathbf{x}\rangle-\langle\mathbf{v}\circ \mathbf{x}\rangle}(1+\langle\mathbf{u}\rangle+\langle\mathbf{v}\rangle)^{-\beta}
		&=\mathrm{e}^{-\langle\mathbf{u}\circ \mathbf{x}\rangle}
		(1+\langle\mathbf{u}\rangle)^{-\beta}
		\sum_{\mathbf{m}=\mathbf{0}}^{\infty}
		L_{\mathbf{m}}^{(-\beta-\langle \mathbf{m}\rangle)}((1+\langle\mathbf{u}\rangle)\mathbf{x})
		\mathbf{v}^{\mathbf{m}}
		(1+\langle\mathbf{u}\rangle)^{-\langle\mathbf{m}\rangle}\notag\\
		&=\sum_{\mathbf{m}=\mathbf{0}}^{\infty}
		\left(L_{\mathbf{m}}^{(-\beta-\langle \mathbf{m}\rangle)}((1+\langle\mathbf{u}\rangle)\mathbf{x})
		\mathrm{e}^{-\langle\mathbf{u}\circ \mathbf{x}\rangle}(1+\langle\mathbf{u}\rangle)^{-\beta-\langle\mathbf{m}\rangle}\right) 
		\mathbf{v}^{\mathbf{m}}.
	\end{align}
	The result \eqref{Prop-1-Cor-2-1} then follows by equating the coefficients of \eqref{Prop-1-Cor-2-Proof-2} and \eqref{Prop-1-Cor-2-Proof-3} in view of \eqref{Prop-1-Cor-2-Proof-1}.
\end{proof}	
\begin{remark}
	When $k=1$, \eqref{Prop-1-Cor-2-1} reduces to
	\begin{equation}\label{Prop-1-Cor-2-Remark-1}
		\sum_{n=0}^{\infty}
		L_{n+m}^{(-\beta-n-m)}(x)
		\frac{(n+m)!}{m!n!}u^{n}
		=
		\mathrm{e}^{-ux}(1+u)^{-\beta-m}L_{m}^{(-\beta-m)}\left((1+u)x\right),
	\end{equation}
	or equivalently, 
	\begin{equation}\label{Prop-1-Cor-2-Remark-2}
		\sum_{n=0}^{\infty}
		f_{n+m}^{\beta}(x)
		\frac{(n+m)!}{m!n!}u^{n}
		=
		\mathrm{e}^{ux}(1-u)^{-\beta-m}
		f_{m}^{\beta}\left((1-u)x\right),
	\end{equation}
	where $f_n^{\beta}(z):=(-1)^n L_n^{(-\beta-n)}(z)$ (see \cite[p. 15, Eq. (4)]{MCBride-Book-1971}). If we let $\beta\rightarrow-\beta-m$, then \eqref{Prop-1-Cor-2-Remark-1} becomes
	\begin{equation}\label{Prop-1-Cor-2-Remark-3}
		\sum_{n=0}^{\infty}
		L_{n+m}^{(\beta-n)}(x)
		\frac{(n+m)!}{m!n!}u^{n}
		=
		\mathrm{e}^{-ux}(1+u)^{\beta}L_{m}^{(\beta)}\left((1+u)x\right),
	\end{equation}
	where $\beta\in\mathbb{C}\setminus\mathbb{Z}_{\leq0}$. 
	There are some different proofs of \eqref{Prop-1-Cor-2-Remark-3} (see, for example, \cite{Carlitz-1960} and \cite{Mittal-1977}).
\end{remark}	

Finally, let us consider the series
\begin{align*}
	S:=\sum_{\mathbf{n}=\mathbf{0}}^{\infty}
	\mathbf{n}! 
	L_{\mathbf{n}}^{(-\alpha-\langle \mathbf{n}\rangle)}(\mathbf{x})
	L_{\mathbf{n}}^{(-\beta-\langle \mathbf{n}\rangle)}(\mathbf{y})
	\mathbf{u}^{\mathbf{n}}.
\end{align*}
Since
\begin{align*}
	L_{\mathbf{n}}^{(-\beta-\langle\mathbf{n}\rangle)}(\mathbf{y})
	&=(-1)^{\langle\mathbf{n}\rangle}\sum_{\mathbf{m}=\mathbf{0}}^{\mathbf{n}}
	\frac{(\beta)_{\langle\mathbf{n}\rangle-\langle\mathbf{m}\rangle}}{(\mathbf{n}-\mathbf{m})!\mathbf{m}!}
	\mathbf{y}^{\mathbf{m}},
\end{align*}
we have
\begin{align}
	S&=\sum_{\mathbf{n}=\mathbf{0}}^{\infty}
	\mathbf{n}! 
	L_{\mathbf{n}}^{(-\alpha-\langle \mathbf{n}\rangle)}(\mathbf{x})
	(-1)^{\langle\mathbf{n}\rangle}\sum_{\mathbf{m}=\mathbf{0}}^{\mathbf{n}}
	\frac{(\beta)_{\langle\mathbf{n}\rangle-\langle\mathbf{m}\rangle}}{(\mathbf{n}-\mathbf{m})!\mathbf{m}!}
	\mathbf{y}^{\mathbf{m}}
	\mathbf{u}^{\mathbf{n}}\notag\\
	&=\sum_{\mathbf{n}=\mathbf{0}}^{\infty}
	(-1)^{\langle\mathbf{n}\rangle}
	(\beta)_{\langle\mathbf{n}\rangle}
	\mathbf{u}^{\mathbf{n}}
	\sum_{\mathbf{m}=\mathbf{0}}^{\infty}
	L_{\mathbf{n}+\mathbf{m}}^{(-\alpha-\langle \mathbf{n}\rangle-\langle\mathbf{m}\rangle)}(\mathbf{x})
	\frac{(\mathbf{n}+\mathbf{m})! }{\mathbf{n}!\mathbf{m}!}
	(-\mathbf{u}\circ\mathbf{y})^{\mathbf{m}}.
	\label{Th-1-Proof-1}
\end{align}
Applying Corollary \ref{Prop-1-Cor-2} to the inner series of \eqref{Th-1-Proof-1} gives immediately
\begin{align}\label{Th-1-Proof-2}
	S&=\sum_{\mathbf{n}=\mathbf{0}}^{\infty}
	(-1)^{\langle\mathbf{n}\rangle}
	(\beta)_{\langle\mathbf{n}\rangle}
	\mathbf{u}^{\mathbf{n}}
	\cdot \mathrm{e}^{\langle\mathbf{u}\circ\mathbf{y}\circ \mathbf{x}\rangle}(1-\langle\mathbf{u}\circ\mathbf{y}\rangle)^{-\alpha-\langle\mathbf{n}\rangle}L_{\mathbf{n}}^{(-\alpha-\langle \mathbf{n}\rangle)}\left((1-\langle\mathbf{u}\circ\mathbf{y}\rangle)\mathbf{x}\right)\notag\\
	&=\mathrm{e}^{\langle\mathbf{u}\circ\mathbf{y}\circ \mathbf{x}\rangle}
	(1-\langle\mathbf{u}\circ\mathbf{y}\rangle)^{-\alpha}
	\sum_{\mathbf{n}=\mathbf{0}}^{\infty}
	(-1)^{\langle\mathbf{n}\rangle}
	(\beta)_{\langle\mathbf{n}\rangle}
	\mathbf{u}^{\mathbf{n}}
	\cdot (1-\langle\mathbf{u}\circ\mathbf{y}\rangle)^{-\langle\mathbf{n}\rangle}L_{\mathbf{n}}^{(-\alpha-\langle \mathbf{n}\rangle)}\left((1-\langle\mathbf{u}\circ\mathbf{y}\rangle)\mathbf{x}\right).
\end{align}
If $\beta=-N\in\mathbb{Z}_{\leq 0}$ in \eqref{Th-1-Proof-2}, the series terminates and we obtain
\begin{align}
	S&=\mathrm{e}^{\langle\mathbf{u}\circ\mathbf{y}\circ \mathbf{x}\rangle}
	(1-\langle\mathbf{u}\circ\mathbf{y}\rangle)^{-\alpha}\notag\\
	&\hspace{0.5cm}\cdot\sum_{\mathbf{n}=\mathbf{0}}^{\langle\mathbf{n}\rangle\leq N}
	(-1)^{\langle\mathbf{n}\rangle}
	(-N)_{\langle\mathbf{n}\rangle}
	L_{\mathbf{n}}^{(-\alpha-\langle \mathbf{n}\rangle)}\left((1-\langle\mathbf{u}\circ\mathbf{y}\rangle)\mathbf{x}\right)
	\cdot\mathbf{u}^{\mathbf{n}}
	(1-\langle\mathbf{u}\circ\mathbf{y}\rangle)^{-\langle\mathbf{n}\rangle}.
	\label{Th-1-Proof-3}
\end{align}
However, the finite sum in \eqref{Th-1-Proof-3} cannot be evaluated using Corollary \ref{Prop-1-Cor-1} unless $k=1$. The case $k=1$ has already been treated by Carlitz \cite{Carlitz-1960}. 

If $\beta\in\mathbb{C}\setminus\mathbb{Z}_{\leq 0}$ in \eqref{Th-1-Proof-2}, the series does not converge in general. However, it is not difficult to show (by adopting a formal procedure of computation) that 
\begin{align*}
	\sum_{\mathbf{n}=\mathbf{0}}^{\infty}
	(-1)^{\langle\mathbf{n}\rangle}
	(\beta)_{\langle\mathbf{n}\rangle}
	L_{\mathbf{n}}^{(-\alpha-\langle \mathbf{n}\rangle)}(\mathbf{y})\mathbf{u}^{\mathbf{n}}
	&\cong\sum_{\mathbf{n}=\mathbf{0}}^{\infty}
	\sum_{\mathbf{m}=\mathbf{0}}^{\infty}
	(\beta)_{\langle\mathbf{n}\rangle+\langle\mathbf{m}\rangle}
	\frac{(\alpha)_{\langle\mathbf{n}\rangle}}{\mathbf{n}!\mathbf{m}!}
	\mathbf{y}^{\mathbf{m}}\mathbf{u}^{\mathbf{n}+\mathbf{m}}\\
	&\cong\sum_{\mathbf{n}=\mathbf{0}}^{\infty}
	\frac{(\alpha)_{\langle\mathbf{n}\rangle}(\beta)_{\langle\mathbf{n}\rangle}}{\mathbf{n}!}
	\mathbf{u}^{\mathbf{n}}
	\sum_{\mathbf{m}=\mathbf{0}}^{\infty}
	\frac{(\beta+\langle\mathbf{n}\rangle)_{\langle\mathbf{m}\rangle}}{\mathbf{m}!}
	(\mathbf{y}\circ\mathbf{u})^{\mathbf{m}}\\
	&\cong\sum_{\mathbf{n}=\mathbf{0}}^{\infty}
	\frac{(\alpha)_{\langle\mathbf{n}\rangle}(\beta)_{\langle\mathbf{n}\rangle}}{\mathbf{n}!}
	\mathbf{u}^{\mathbf{n}}
	(1-\langle\mathbf{y}\circ\mathbf{u}\rangle)^{-\beta-\mathbf{n}}\\
	&\cong (1-\langle\mathbf{y}\circ\mathbf{u}\rangle)^{-\beta}\sum_{\mathbf{n}=\mathbf{0}}^{\infty}
	\frac{(\alpha)_{\langle\mathbf{n}\rangle}(\beta)_{\langle\mathbf{n}\rangle}}{\mathbf{n}!}
	\mathbf{u}^{\mathbf{n}}
	(1-\langle\mathbf{y}\circ\mathbf{u}\rangle)^{-\mathbf{n}}\\
	&\cong (1-\langle\mathbf{y}\circ\mathbf{u}\rangle)^{-\beta}
	\sum_{n=0}^{\infty}\frac{(\alpha)_n(\beta)_n}{n!}\left(\frac{\langle\mathbf{u}\rangle}{1-\langle\mathbf{y}\circ\mathbf{u}\rangle}\right)^n\\
	&\cong (1-\langle\mathbf{y}\circ\mathbf{u}\rangle)^{-\beta}
	{}_{2}F_{0}\left[\begin{matrix}
		\alpha,\beta\\
		-
	\end{matrix}\frac{\langle\mathbf{u}\rangle}{1-\langle\mathbf{y}\circ\mathbf{u}\rangle}\right].
\end{align*}
where the symbol $\cong$ means ``formally equal''. We thus obtain from \eqref{Th-1-Proof-2}: 
\begin{equation}\label{Th-1-Proof-4} 
	S\cong\mathrm{e}^{\langle\mathbf{u}\circ\mathbf{y}\circ \mathbf{x}\rangle}
	(1-\langle\mathbf{u}\circ\mathbf{y}\rangle)^{-\alpha}
	(1-\langle\mathbf{u}\circ\mathbf{x}\rangle)^{-\beta}
	{}_{2}F_{0}\left[\begin{matrix}
		\alpha,\beta\\
		-
	\end{matrix}\frac{\langle\mathbf{u}\rangle}{(1-\langle\mathbf{x}\circ\mathbf{u}\rangle)(1-\langle\mathbf{y}\circ\mathbf{u}\rangle)}\right].
\end{equation}
The relation \eqref{Th-1-Proof-4} is a multivariate generalization of the following bilinear generating relation (\cite[p. 23, Eq. (5a)]{MCBride-Book-1971}; see also \cite[p. 52, Eq. (4.21)]{Mittal-1977}):
\[ 
\sum_{n=0}^{\infty}n!f_n^{\alpha}(x) f_n^{\beta}(y)t^n
\cong \mathrm{e}^{xyt}(1-yt)^{-\alpha}(1-xt)^{-\beta}{}_{2}F_{0}\left[\begin{matrix}
	\alpha,\beta\\
	-
\end{matrix};\frac{t}{(1-xt)(1-yt)}\right],
\] 
where $f_n^\alpha(x)$ is the modified Laguerre polynomials.

\section{Generating functions of type II}\label{Sec-GF-Type-II}

\subsection{Main results}

Let $\beta_r-\beta_{r-1}\in\mathbb{C}\setminus\mathbb{Z}_{\leq0}$ for $r=1,\cdots,k$; where $\beta_0\equiv\beta$. Also, let $\alpha,\sigma_1,\cdots,\sigma_k\in\mathbb{C}$, $\gamma\in\mathbb{C}\setminus\mathbb{Z}_{\leq 0}$, $x\in\mathbb{C}$ $(|x|<1)$ and $y\in\mathbb{C}$. Tremblay and Lavertu \cite[p. 13, Eq. (2.4)]{Tremblay-Lavertu-1972} proved that
\begin{align}\label{GF-TremblayLavertu-0}
	\Phi_1\left[\alpha,\beta;\gamma;x,y\right]
	&=\sum_{r_1,\cdots,r_k=0}^{\infty}\frac{(\alpha)_{r_1+\cdots+r_k}}{(\gamma)_{r_1+\cdots+r_k}} (-x)^{r_1+\cdots+r_k}\notag\\
	&\hspace{1cm}\cdot L_{r_1}^{(\beta_1-\beta-r_1)}\left((1-\sigma_1)y/x\right) \cdots 
	L_{r_k}^{(\beta_k-\beta_{k-1}-r_k)}\left((1-\sigma_k)\sigma_1\cdots\sigma_{k-1}y/x\right)\notag\\
	&\hspace{1cm}\cdot\Phi_1\left[\alpha+r_1+\cdots+r_k,\beta_k;\gamma+r_1+\cdots+r_k;x,\sigma_1\cdots\sigma_k y\right], 
\end{align}

Special cases of \eqref{GF-TremblayLavertu-0} yield some interesting formulas, and one such special case was stated by Tremblay and Lavertu as the main result \cite[p. 12, Eq. (1.4)]{Tremblay-Lavertu-1972} given by
\begin{align}\label{GF-TremblayLavertu}
	\Phi_1\left[\alpha,\beta;\gamma;-u,-u(w_1+\cdots+w_k)\right]
	&=\sum_{r_1,\cdots,r_k=0}^{\infty}\frac{(\alpha)_{r_1+\cdots+r_k}}{(\gamma)_{r_1+\cdots+r_k}} u^{r_1+\cdots+r_k}\notag\\
	&\hspace{1cm}\cdot L_{r_k}^{(-\beta_{k-1}-r_k)}(w_k)
	\prod_{j=1}^{k-1}L_{r_j}^{(\beta_j-\beta_{j-1}-r_j)}(w_j). 
\end{align}
Note that when $k=1$, \eqref{GF-TremblayLavertu} reduces to \eqref{GF-AbdulHalim-AlSalam}.

The formula \eqref{GF-TremblayLavertu-0} is of interest to us because it is quite different from the existing multiple generating functions for Laguerre polynomials, see for instance, the well-known generating function of Srivastava and Singhal \cite[p. 1239, Eq. (5)]{Srivastava-Singhal-1972}; see also \cite[p. 9, Eq. (1.13)]{Srivastava-1984}. However, the formula \eqref{GF-TremblayLavertu-0} seems to have been overlooked in subsequent researches. In this subsection, we explore the multivariate generalization \eqref{GF-TremblayLavertu-0}.

First, we prove the following lemma.

\begin{lemma}\label{Lemma}
	Let $\sigma, \alpha\in\mathbb{C}$ and $\beta_1-\beta, \gamma\in\mathbb{C}\setminus\mathbb{Z}_{\leq 0}$. 
	Also, let $\mathbf{x},\mathbf{y}\in\mathbb{C}^k$ with $|x_1|+\cdots+|x_k|<1$. We have
	\begin{align}\label{Lemma-1}
		\Phi_1\left[\alpha,\beta;\gamma;\langle\mathbf{x}\rangle,\langle\mathbf{y}\rangle\right]
		&=\sum_{\mathbf{n}=\mathbf{0}}^{\infty}\frac{(\alpha)_{\langle\mathbf{n}\rangle}}{(\gamma)_{\langle\mathbf{n}\rangle}}
		(-\mathbf{x})^{\mathbf{n}}
		L_{\mathbf{n}}^{(\beta_1-\beta-\langle\mathbf{n}\rangle)}\left((1-\bm{\sigma})\circ\mathbf{y}/\mathbf{x}\right)\notag\\
		&\hspace{1.5cm}\cdot \Phi_1\left[\alpha+\langle\mathbf{n}\rangle,\beta_1;\gamma+\langle\mathbf{n}\rangle;\langle\mathbf{x}\rangle,\langle\bm{\sigma}\circ\mathbf{y}\rangle\right].
	\end{align} 	
\end{lemma}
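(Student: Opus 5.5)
The plan is to imitate the third proof of Proposition \ref{Prop-1}, the one based on the Euler-type integral representation \eqref{Prop-1-Proof-6}. The key point is that the integrand of $\Phi_1$ factors in a way that separates $\beta$ from $\beta_1$. First I assume $\Re(\gamma)>\Re(\alpha)>0$ and write
\[
\Phi_1\left[\alpha,\beta;\gamma;\langle\mathbf{x}\rangle,\langle\mathbf{y}\rangle\right]
=\int_0^1 (1-\langle\mathbf{x}\rangle t)^{-\beta_1}\mathrm{e}^{\langle\bm{\sigma}\circ\mathbf{y}\rangle t}\cdot
(1-\langle\mathbf{x}\rangle t)^{\beta_1-\beta}\mathrm{e}^{\langle(\mathbf{1}-\bm{\sigma})\circ\mathbf{y}\rangle t}\,
\mathrm{d}\mu_{\alpha,\gamma-\alpha}(t).
\]

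Next I expand the second factor with the generating function \eqref{Prop-1-2}. In \eqref{Prop-1-2} I replace $\beta$ by $\beta-\beta_1$, which is admissible because $\beta_1-\beta\notin\mathbb{Z}_{\leq0}$. I also set $\mathbf{u}=-t\mathbf{x}$, so that $1+\langle\mathbf{u}\rangle=1-\langle\mathbf{x}\rangle t$. Finally I take the argument to be $(\mathbf{1}-\bm{\sigma})\circ\mathbf{y}/\mathbf{x}$, so that $-\langle\mathbf{u}\circ\mathbf{w}\rangle=\langle(\mathbf{1}-\bm{\sigma})\circ\mathbf{y}\rangle t$. This gives
\[
(1-\langle\mathbf{x}\rangle t)^{\beta_1-\beta}\mathrm{e}^{\langle(\mathbf{1}-\bm{\sigma})\circ\mathbf{y}\rangle t}
=\sum_{\mathbf{n}=\mathbf{0}}^{\infty}L_{\mathbf{n}}^{(\beta_1-\beta-\langle\mathbf{n}\rangle)}\left((\mathbf{1}-\bm{\sigma})\circ\mathbf{y}/\mathbf{x}\right)(-\mathbf{x})^{\mathbf{n}}t^{\langle\mathbf{n}\rangle}.
\]
The apparent division by $x_j$ is harmless. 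By the explicit form of $L_{\mathbf{n}}^{(-\beta-\langle\mathbf{n}\rangle)}$ used in the proof of Proposition \ref{Prop-1}, each product $(-\mathbf{x})^{\mathbf{n}}L_{\mathbf{n}}^{(\cdot)}(\mathbf{w}/\mathbf{x})$ is a polynomial in $\mathbf{x}$ and $\mathbf{w}$. So the identity holds for all $\mathbf{x}$ with $|x_1|+\cdots+|x_k|<1$, and the $k$ coordinate constraints cause no trouble.

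I then insert this expansion and integrate term by term. The interchange is justified because for $t\in[0,1]$ we have $\sum_j|x_jt|\le\sum_j|x_j|<1$, so the series converges uniformly on $[0,1]$. For each term I use the elementary identity
\[
t^{\langle\mathbf{n}\rangle}\mathrm{d}\mu_{\alpha,\gamma-\alpha}(t)=\frac{(\alpha)_{\langle\mathbf{n}\rangle}}{(\gamma)_{\langle\mathbf{n}\rangle}}\mathrm{d}\mu_{\alpha+\langle\mathbf{n}\rangle,\gamma-\alpha}(t).
\]
Applying \eqref{Prop-1-Proof-6} once more, with parameters $(\alpha+\langle\mathbf{n}\rangle,\beta_1;\gamma+\langle\mathbf{n}\rangle)$, turns each integral into
\[
\frac{(\alpha)_{\langle\mathbf{n}\rangle}}{(\gamma)_{\langle\mathbf{n}\rangle}}\,\Phi_1\left[\alpha+\langle\mathbf{n}\rangle,\beta_1;\gamma+\langle\mathbf{n}\rangle;\langle\mathbf{x}\rangle,\langle\bm{\sigma}\circ\mathbf{y}\rangle\right].
\]
This is exactly \eqref{Lemma-1} under the restriction $\Re(\gamma)>\Re(\alpha)>0$.

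The main obstacle is removing that restriction by analytic continuation in $\alpha$ and $\gamma$. This requires the right side of \eqref{Lemma-1} to converge locally uniformly in $(\alpha,\gamma)$, with $\gamma$ kept away from $\mathbb{Z}_{\le0}$. My approach is to bound $\Phi_1[\alpha+\langle\mathbf{n}\rangle,\beta_1;\gamma+\langle\mathbf{n}\rangle;X,Y]$ by a constant times $(1-|X|)^{-|\beta_1|}\mathrm{e}^{|Y|}$, uniformly in $\mathbf{n}$ on compacts. The ratios $(\alpha+m)_{p}/(\gamma+m)_p$ are bounded by roughly $(m+p)^{|\alpha-\gamma|}$-type factors, which are absorbed by a slight enlargement of $|X|$. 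Separately, I bound $|(-\mathbf{x})^{\mathbf{n}}L_{\mathbf{n}}|$ by the coefficients of the majorant $\mathrm{e}^{\sum|x_jw_j|}(1-\sum|x_j|)^{-|\beta-\beta_1|}$. This would give geometric convergence in $\langle\mathbf{n}\rangle$.

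If this bookkeeping becomes messy, I would fall back on a purely series-based proof. That proof expands both sides as power series in $\mathbf{x}$ and $\mathbf{y}$ and compares coefficients. By the reduction $\sum_{\mathbf{n}}f(\langle\mathbf{n}\rangle)\mathbf{x}^{\mathbf{n}}/\mathbf{n}!=\sum_n f(n)\langle\mathbf{x}\rangle^n/n!$, the comparison amounts to a Vandermonde-type convolution identity for $(\beta)_m$, $(\beta_1)_m$ and $(\beta-\beta_1)_m$.
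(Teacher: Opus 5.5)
Your proposal is correct and is essentially the paper's alternative (integration-method) proof of Lemma~\ref{Lemma}. That proof multiplies \eqref{Prop-1-Proof-4} by $(1-Xt)^{-\beta_1}\mathrm{e}^{Yt}$, integrates against $\mathrm{d}\mu_{\alpha,\gamma-\alpha}$, and then specializes $X$, $Y$, $\mathbf{x}$ and $\beta\to\beta-\beta_1$; you run the same computation starting from the left-hand side, your fallback series argument is exactly the paper's main proof via \eqref{Lemma-Proof-B}, and your analytic continuation removing $\Re(\gamma)>\Re(\alpha)>0$ (together with the polynomiality of $(-\mathbf{x})^{\mathbf{n}}L_{\mathbf{n}}(\cdot/\mathbf{x})$) supplies details the paper's integral proof leaves implicit.
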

\begin{proof}
	We observe that
	\begin{align*}
		L_{\mathbf{n}}^{(\beta_1-\beta-\langle\mathbf{n}\rangle)}((1-\bm{\sigma})\circ\mathbf{y}/\mathbf{x})
		&=(-1)^{\langle\mathbf{n}\rangle}\frac{(\beta-\beta_1)_{\langle\mathbf{n}\rangle}}{\mathbf{n}!}
		\sum_{\mathbf{j}=\mathbf{0}}^{\mathbf{n}}
		\frac{(-n_1)_{j_1}\cdots (-n_k)_{j_k}}{(1-\beta+\beta_1-\langle\mathbf{n}\rangle)_{\langle\mathbf{j}\rangle}}\frac{(\mathbf{1}-\bm{\sigma})^{\mathbf{j}}}{\mathbf{j}!}
		\frac{\mathbf{y}^{\mathbf{j}}}{\mathbf{x}^{\mathbf{j}}}\\
		&=(-1)^{\langle\mathbf{n}\rangle}\sum_{\mathbf{j}=\mathbf{0}}^{\mathbf{n}}
		\frac{(\beta-\beta_1)_{\langle\mathbf{n}\rangle-\langle\mathbf{j}\rangle}}{(\mathbf{n}-\mathbf{j})!\mathbf{j}!}(\mathbf{1}-\bm{\sigma})^{\mathbf{j}}
		\frac{\mathbf{y}^{\mathbf{j}}}{\mathbf{x}^{\mathbf{j}}}.
	\end{align*}
	
	Following similar calculations as in \eqref{Prop-1-Proof-3}, we have then
	\begin{align}
		&\sum_{\mathbf{n}=\mathbf{0}}^{\infty}\frac{(\alpha)_{\langle\mathbf{n}\rangle}}{(\gamma)_{\langle\mathbf{n}\rangle}}
		(-\mathbf{x})^{\mathbf{n}}
		\Phi_1\left[\alpha+\langle\mathbf{n}\rangle,\beta_1;\gamma+\langle\mathbf{n}\rangle;\langle\mathbf{x}\rangle,\langle\bm{\sigma}\circ\mathbf{y}\rangle\right]
		L_{\mathbf{n}}^{(\beta_1-\beta-\langle\mathbf{n}\rangle)}\left((1-\bm{\sigma})\circ\mathbf{y}/\mathbf{x}\right)
		\notag\\
		&\hspace{1cm}=\sum_{\mathbf{n}=\mathbf{0}}^{\infty}\frac{(\alpha)_{\langle\mathbf{n}\rangle}}{(\gamma)_{\langle\mathbf{n}\rangle}}
		\mathbf{x}^{\mathbf{n}}
		\sum_{\mathbf{p},\mathbf{q}=\mathbf{0}}^{\infty}
		\frac{(\alpha+\langle\mathbf{n}\rangle)_{\langle\mathbf{p}\rangle+\langle\mathbf{q}\rangle}(\beta_1)_{\langle\mathbf{p}\rangle}}{(\gamma+\langle\mathbf{n}\rangle)_{\langle\mathbf{p}\rangle+\langle\mathbf{q}\rangle}}
		\frac{\mathbf{x}^{\mathbf{p}}}{\mathbf{p}!}
		\frac{(\bm{\sigma}\circ\mathbf{y})^{\mathbf{q}}}{\mathbf{q}!}\sum_{\mathbf{j}=\mathbf{0}}^{\mathbf{n}}
		\frac{(\beta-\beta_1)_{\langle\mathbf{n}\rangle-\langle\mathbf{j}\rangle}}{(\mathbf{n}-\mathbf{j})!\mathbf{j}!}
		(\mathbf{1}-\bm{\sigma})^{\mathbf{j}}
		\frac{\mathbf{y}^{\mathbf{j}}}{\mathbf{x}^{\mathbf{j}}}
		\notag\\
		&\hspace{1cm}=\sum_{\mathbf{n},\mathbf{j}=\mathbf{0}}^{\infty}\frac{(\alpha)_{\langle\mathbf{n}\rangle+\langle\mathbf{j}\rangle}}{(\gamma)_{\langle\mathbf{n}\rangle+\langle\mathbf{j}\rangle}}
		\mathbf{x}^{\mathbf{n}+\mathbf{j}}
		\sum_{\mathbf{p},\mathbf{q}=\mathbf{0}}^{\infty}
		\frac{(\alpha+\langle\mathbf{n}\rangle+\langle\mathbf{j}\rangle)_{\langle\mathbf{p}\rangle+\langle\mathbf{q}\rangle}(\beta_1)_{\langle\mathbf{p}\rangle}}{(\gamma+\langle\mathbf{n}\rangle+\langle\mathbf{j}\rangle)_{\langle\mathbf{p}\rangle+\langle\mathbf{q}\rangle}}
		\frac{\mathbf{x}^{\mathbf{p}}}{\mathbf{p}!}
		\frac{(\bm{\sigma}\circ \mathbf{y})^{\mathbf{q}}}{\mathbf{q}!}
		\frac{(\beta-\beta_1)_{\langle\mathbf{n}\rangle}}{\mathbf{n}!\mathbf{j}!}(\mathbf{1}-\bm{\sigma})^{\mathbf{j}}
		\frac{\mathbf{y}^{\mathbf{j}}}{\mathbf{x}^{\mathbf{j}}}\notag\\
		&\hspace{1cm}=\sum_{\mathbf{n},\mathbf{j},\mathbf{p},\mathbf{q}=\mathbf{0}}^{\infty}
		\frac{(\alpha)_{\langle\mathbf{n}\rangle+\langle\mathbf{j}\rangle+\langle\mathbf{p}\rangle+\langle\mathbf{q}\rangle}(\beta_1)_{\langle\mathbf{p}\rangle}}{(\gamma)_{\langle\mathbf{n}\rangle+\langle\mathbf{j}\rangle+\langle\mathbf{p}\rangle+\langle\mathbf{q}\rangle}}
		\frac{\mathbf{x}^{\mathbf{n}+\mathbf{p}}}{\mathbf{p}!\mathbf{q}!}
		\frac{(\beta-\beta_1)_{\langle\mathbf{n}\rangle}}{\mathbf{n}!\mathbf{j}!}
		\bm{\sigma}^{\mathbf{q}}(\mathbf{1}-\bm{\sigma})^{\mathbf{j}}\mathbf{y}^{\mathbf{q}+\mathbf{j}}\notag\\
		&\hspace{1cm}=\sum_{\mathbf{n},\mathbf{p},\mathbf{j}=\mathbf{0}}^{\infty}
		\frac{(\alpha)_{\langle\mathbf{n}\rangle+\langle\mathbf{j}\rangle+\langle\mathbf{p}\rangle}(\beta_1)_{\langle\mathbf{p}\rangle}}{(\gamma)_{\langle\mathbf{n}\rangle+\langle\mathbf{j}\rangle+\langle\mathbf{p}\rangle}}
		\frac{\mathbf{x}^{\mathbf{p}+\mathbf{n}}}{\mathbf{p}!}
		\frac{(\beta-\beta_1)_{\langle\mathbf{n}\rangle}}{\mathbf{n}!}
		(\mathbf{1}-\bm{\sigma})^{\mathbf{j}}
		\mathbf{y}^{\mathbf{j}}\sum_{\mathbf{q}=\mathbf{0}}^{\mathbf{j}}\frac{(\bm{\sigma}/(\mathbf{1}-\bm{\sigma}))^{\mathbf{q}}}{\mathbf{q}!(\mathbf{j}-\mathbf{q})!}
		~~(\mathbf{j}\rightarrow \mathbf{j}-\mathbf{q})\notag\\
		&\hspace{1cm}=\sum_{\mathbf{n},\mathbf{p},\mathbf{j}=\mathbf{0}}^{\infty}
		\frac{(\alpha)_{\langle\mathbf{n}\rangle+\langle\mathbf{j}\rangle+\langle\mathbf{p}\rangle}(\beta_1)_{\langle\mathbf{p}\rangle}}{(\gamma)_{\langle\mathbf{n}\rangle+\langle\mathbf{j}\rangle+\langle\mathbf{p}\rangle}}
		\frac{\mathbf{x}^{\mathbf{p}+\mathbf{n}}}{\mathbf{p}!}
		\frac{(\beta-\beta_1)_{\langle\mathbf{n}\rangle}}{\mathbf{n}!}
		\frac{(\mathbf{1}-\bm{\sigma})^{\mathbf{j}}\mathbf{y}^{\mathbf{j}}}{\mathbf{j}!}\frac{1}{(\mathbf{1}-\bm{\sigma})^{\mathbf{j}}}\notag\\
		&\hspace{1cm}=\sum_{\mathbf{n},\mathbf{p},\mathbf{j}=\mathbf{0}}^{\infty}
		\frac{(\alpha)_{\langle\mathbf{n}\rangle+\langle\mathbf{j}\rangle+\langle\mathbf{p}\rangle}(\beta_1)_{\langle\mathbf{p}\rangle}}{(\gamma)_{\langle\mathbf{n}\rangle+\langle\mathbf{j}\rangle+\langle\mathbf{p}\rangle}}
		\frac{\mathbf{x}^{\mathbf{p}+\mathbf{n}}}{\mathbf{p}!}
		\frac{(\beta-\beta_1)_{\langle\mathbf{n}\rangle}}{\mathbf{n}!}
		\frac{\mathbf{y}^{\mathbf{j}}}{\mathbf{j}!}\notag\\
		&\hspace{1cm}=\sum_{\mathbf{j},\mathbf{n}=\mathbf{0}}^{\infty}
		\frac{(\alpha)_{\langle\mathbf{n}\rangle+\langle\mathbf{j}\rangle}}{(\gamma)_{\langle\mathbf{n}\rangle+\langle\mathbf{j}\rangle}}
		\frac{\mathbf{y}^{\mathbf{j}}}{\mathbf{j}!}\mathbf{x}^{\mathbf{n}}
		\sum_{\mathbf{p}=\mathbf{0}}^{\mathbf{n}}\frac{(\beta_1)_{\langle\mathbf{p}\rangle}(\beta-\beta_1)_{\langle\mathbf{n}\rangle-\langle\mathbf{p}\rangle}}{(\mathbf{n}-\mathbf{p})!\mathbf{p}!}~~(\mathbf{n}\rightarrow \mathbf{n}-\mathbf{p}).\label{Lemma-Proof-A}
	\end{align}
	
	The inner multiple summation in the last step on the right-hand side of \eqref{Lemma-Proof-A} can be simplified. 
	
	Indeed, by making use of an obvious relationship: $(1-\langle \mathbf{z}\rangle)^{-\lambda_1-\lambda_2}
	=(1-\langle \mathbf{z}\rangle)^{-\lambda_1}
	(1-\langle \mathbf{z}\rangle)^{-\lambda_2}$, we have
	\[
	\sum_{\mathbf{m}=\mathbf{0}}^{\infty}(\lambda_1+\lambda_2)_{\langle\mathbf{m}\rangle}\frac{\mathbf{z}^{\mathbf{m}}}{\mathbf{m}!}
	=\sum_{\mathbf{p},\mathbf{q}=\mathbf{0}}^{\infty}
	(\lambda_1)_{\langle\mathbf{p}\rangle}
	(\lambda_2)_{\langle\mathbf{q}\rangle}
	\frac{\mathbf{z}^{\mathbf{p}+\mathbf{q}}}{\mathbf{p}!\mathbf{q}!}
	=\sum_{\mathbf{m}=\mathbf{0}}^{\infty}
	\left(\sum_{\mathbf{p}=\mathbf{0}}^{\mathbf{m}}
	\frac{(\lambda_1)_{\langle\mathbf{p}\rangle}
		(\lambda_2)_{\langle\mathbf{m}\rangle-\langle\mathbf{p}\rangle}}{\mathbf{p}!(\mathbf{m}-\mathbf{q})!}\right)\mathbf{z}^{\mathbf{m}},
	\]
	which at once leads to	
	\begin{equation}\label{Lemma-Proof-B}
		\frac{(\lambda_1+\lambda_2)_{\langle\mathbf{m}\rangle}}{\mathbf{m}!}=\sum_{\mathbf{p}=\mathbf{0}}^{\mathbf{m}}
		\frac{(\lambda_1)_{\langle\mathbf{p}\rangle}
			(\lambda_2)_{\langle\mathbf{m}\rangle-\langle\mathbf{p}\rangle}}{\mathbf{p}!(\mathbf{m}-\mathbf{q})!}.
	\end{equation}
	
	It follows therefore from \eqref{Lemma-Proof-A} and \eqref{Lemma-Proof-B} that 
	\begin{align*}
		&\sum_{\mathbf{n}=\mathbf{0}}^{\infty}\frac{(\alpha)_{\langle\mathbf{n}\rangle}}{(\gamma)_{\langle\mathbf{n}\rangle}}
		(-\mathbf{x})^{\mathbf{n}}
		\Phi_1\left[\alpha+\langle\mathbf{n}\rangle,\beta_1;\gamma+\langle\mathbf{n}\rangle;\mathbf{x},\bm{\sigma}\circ\mathbf{y}\right]
		L_{\mathbf{n}}^{(\beta_1-\beta-\langle\mathbf{n}\rangle)}\left((1-\bm{\sigma})\mathbf{x}/\mathbf{y}\right)
		\\
		&\hspace{1cm}=\sum_{\mathbf{n},\mathbf{j}=\mathbf{0}}^{\infty}
		\frac{(\alpha)_{\langle\mathbf{n}\rangle+\langle\mathbf{j}\rangle}(\beta)_{\langle\mathbf{n}\rangle}}{(\gamma)_{\langle\mathbf{n}\rangle+\langle\mathbf{j}\rangle}}
		\frac{\mathbf{x}^{\mathbf{n}}}{\mathbf{n}!}
		\frac{\mathbf{y}^{\mathbf{j}}}{\mathbf{j}!}, 
	\end{align*}
	which by appealing to \eqref{Prop-1-Proof-3} leads to the desired result. 
\end{proof}
\begin{remark}
	By letting $\beta_1=0$, $\bm{\sigma}=\mathbf{0}$,  $\mathbf{x}\rightarrow-\mathbf{u}$ and $\mathbf{y}\rightarrow-\mathbf{u}\circ\mathbf{y}$ in \eqref{Lemma-1}, we obtain \eqref{Prop-1-1} of Proposition \ref{Prop-1}.
\end{remark}

\emph{An alternative proof of Lemma \ref{Lemma}.} Considering the importance of Lemma \ref{Lemma}, we present here another approach based on the integration method. Multiplying both sides of \eqref{Prop-1-Proof-4} by $(1-Xt)^{-\beta_1}\mathrm{e}^{Yt}$ and then integrating with respect to the measure $\mu_{\alpha,\gamma-\alpha}(t)$, we obtain 
\begin{align*}
	&\int_{0}^{1}(1-Xt)^{-\beta_1}\mathrm{e}^{Yt}
	\mathrm{e}^{-t\langle \mathbf{u}\circ \mathbf{x}\rangle}(1+t\langle \mathbf{u}\rangle)^{-\beta}
	\mathrm{d}\mu_{\alpha,\gamma-\alpha}(t)\\
	&\hspace{1cm}=\sum_{\mathbf{n}=\mathbf{0}}^{\infty}
	L_{\mathbf{n}}^{(-\beta-\langle \mathbf{n}\rangle)}(\mathbf{x})
	\mathbf{u}^{\mathbf{n}}
	\int_{0}^{1}(1-Xt)^{-\beta_1}\mathrm{e}^{Yt}t^{\langle\mathbf{n}\rangle}\mathrm{d}\mu_{\alpha,\gamma-\alpha}(t)\\
	&\hspace{1cm}=\sum_{\mathbf{n}=\mathbf{0}}^{\infty}
	\frac{(\alpha)_{\langle\mathbf{n}\rangle}}{(\gamma)_{\langle\mathbf{n}\rangle}}
	L_{\mathbf{n}}^{(-\beta-\langle \mathbf{n}\rangle)}(\mathbf{x})
	\mathbf{u}^{\mathbf{n}}
	\int_{0}^{1}(1-Xt)^{-\beta_1}\mathrm{e}^{Yt}\mathrm{d}\mu_{\alpha+\langle\mathbf{n}\rangle,\gamma-\alpha}(t).
\end{align*}
In view of the integral representation \eqref{Prop-1-Proof-6}, the above formula can be expressed as
\begin{align}\label{Lemma-Proof-1}
	&\int_{0}^{1}(1-Xt)^{-\beta_1}\mathrm{e}^{Yt}
	\mathrm{e}^{-t\langle \mathbf{u}\circ \mathbf{x}\rangle}(1+t\langle \mathbf{u}\rangle)^{-\beta}
	\mathrm{d}\mu_{\alpha,\gamma-\alpha}(t)\notag\\
	&\hspace{1cm}=\sum_{\mathbf{n}=\mathbf{0}}^{\infty}
	\frac{(\alpha)_{\langle\mathbf{n}\rangle}}{(\gamma)_{\langle\mathbf{n}\rangle}}
	L_{\mathbf{n}}^{(-\beta-\langle \mathbf{n}\rangle)}(\mathbf{x})
	\mathbf{u}^{\mathbf{n}}
	\Phi_1\left[\alpha+\langle\mathbf{n}\rangle,\beta_1;\gamma+\langle\mathbf{n}\rangle;X,Y\right].
\end{align}
In order to further simplify the right-hand side of \eqref{Lemma-Proof-1} with the help of  \eqref{Prop-1-Proof-6}, we let $\mathbf{u}\rightarrow -\mathbf{u}$ and set $X=\langle\mathbf{u}\rangle$, $Y=\langle\bm{\sigma}\circ\mathbf{y}\rangle$, we then obtain
\begin{align}\label{Lemma-Proof-2}
	&\int_{0}^{1}(1-\langle\mathbf{u}\rangle t)^{-\beta_1-\beta}
	\mathrm{e}^{\langle\bm{\sigma}\circ\mathbf{y}\rangle t+\langle \mathbf{u}\circ \mathbf{x}\rangle t}
	\mathrm{d}\mu_{\alpha,\gamma-\alpha}(t)\notag\\
	&\hspace{1cm}=\sum_{\mathbf{n}=\mathbf{0}}^{\infty}
	\frac{(\alpha)_{\langle\mathbf{n}\rangle}}{(\gamma)_{\langle\mathbf{n}\rangle}}
	L_{\mathbf{n}}^{(-\beta-\langle \mathbf{n}\rangle)}(\mathbf{x})
	(-\mathbf{u})^{\mathbf{n}}
	\Phi_1\left[\alpha+\langle\mathbf{n}\rangle,\beta_1;\gamma+\langle\mathbf{n}\rangle;\langle\mathbf{u}\rangle,\langle\bm{\sigma}\circ\mathbf{y}\rangle\right].
\end{align}
Letting $\mathbf{x}=(\mathbf{1}-\bm{\sigma})\circ \mathbf{y}/\mathbf{u}$ in \eqref{Lemma-Proof-2} and taking into account that 
$\langle\bm{\sigma}\circ\mathbf{y}\rangle+\langle \mathbf{u}\circ \mathbf{x}\rangle=\langle\mathbf{y}\rangle$, we have
\begin{align}\label{Lemma-Proof-3}
	&\int_{0}^{1}(1-\langle\mathbf{u}\rangle t)^{-\beta_1-\beta}
	\mathrm{e}^{\langle\mathbf{y}\rangle t}
	\mathrm{d}\mu_{\alpha,\gamma-\alpha}(t)\notag\\
	&\hspace{1cm}=\sum_{\mathbf{n}=\mathbf{0}}^{\infty}
	\frac{(\alpha)_{\langle\mathbf{n}\rangle}}{(\gamma)_{\langle\mathbf{n}\rangle}}
	L_{\mathbf{n}}^{(-\beta-\langle \mathbf{n}\rangle)}(\mathbf{x})
	(-\mathbf{u})^{\mathbf{n}}
	\Phi_1\left[\alpha+\langle\mathbf{n}\rangle,\beta_1;\gamma+\langle\mathbf{n}\rangle;\langle\mathbf{u}\rangle,\langle\bm{\sigma}\circ\mathbf{y}\rangle\right].
\end{align}
Now, the formula \eqref{Lemma-1} follows by interpreting the right-hand side of \eqref{Lemma-Proof-3} (in view of \eqref{Prop-1-Proof-6}) as the Humbert function $\Phi_1$, and then letting $\beta\rightarrow\beta-\beta_1$.

The following result is based upon the repeated use of the result \eqref{Lemma-1} and thus constitutes a generalized result which has several applications described below.

\begin{theorem}\label{Th-MainR}
	Let $\beta_r-\beta_{r-1}\in\mathbb{C}\setminus\mathbb{Z}_{\leq 0}$ for $r=1,\cdots,L$; where $\beta_0\equiv\beta$. Also, let $\alpha\in\mathbb{C}$, $\gamma\in\mathbb{C}\setminus\mathbb{Z}_{\leq0}$,  $\bm{\sigma}^{(1)},\cdots,\bm{\sigma}^{(L)}\in\mathbb{C}^{k}$ and $\mathbf{x},\mathbf{y}\in\mathbb{C}^k$ with $|x_1|+\cdots+|x_k|<1$. We have
	\begin{align}\label{Th-MainR-1}
		&\Phi_1\left[\alpha,\beta;\gamma;\langle\mathbf{x}\rangle,\langle\mathbf{y}\rangle\right]=\sum_{\mathbf{n}^{(1)},\cdots,\mathbf{n}^{(L)}=\mathbf{0}}^{\infty}\frac{(\alpha)_{\langle\mathbf{n}^{(1)}\rangle+\cdots+\langle\mathbf{n}^{(L)}\rangle}}{(\gamma)_{\langle\mathbf{n}^{(1)}\rangle+\cdots+\langle\mathbf{n}^{(L)}\rangle}}
		(-\mathbf{x})^{\mathbf{n}^{(1)}+\cdots+\mathbf{n}^{(L)}}\notag\\
		&\hspace{1cm}\cdot 
		L_{\mathbf{n}^{(1)}}^{(\beta_1-\beta-\langle\mathbf{n}^{(1)}\rangle)}\left((1-\bm{\sigma}^{(1)})\circ\mathbf{y}/\mathbf{x}\right)
		\cdots 
		L_{\mathbf{n}^{(L)}}^{(\beta_L-\beta_{L-1}-\langle\mathbf{n}^{(L)}\rangle)}\left((1-\bm{\sigma}^{(L)})\circ\bm{\sigma}^{(1)}\cdots\circ\bm{\sigma}^{(L-1)}\circ\mathbf{y}/\mathbf{x}\right)\notag\\
		&\hspace{1cm}\cdot \Phi_1\left[\alpha+\langle\mathbf{n}^{(1)}\rangle+\cdots+\langle\mathbf{n}^{(L)}\rangle,\beta_L;\gamma+\langle\mathbf{n}^{(1)}\rangle+\cdots+\langle\mathbf{n}^{(L)}\rangle;\langle\mathbf{x}\rangle,\langle\bm{\sigma}^{(L)}\circ\cdots\circ\bm{\sigma}^{(1)}\circ\mathbf{y}\rangle\right].
	\end{align}	
\end{theorem}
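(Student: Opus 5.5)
We argue by induction on $L$, with Lemma \ref{Lemma} as both the base case and the inductive engine. For $L=1$, \eqref{Th-MainR-1} is exactly \eqref{Lemma-1} with $\bm{\sigma}=\bm{\sigma}^{(1)}$. Suppose \eqref{Th-MainR-1} holds for $L-1$. In that identity the innermost factor is
\[
\Phi_1\left[\alpha+N,\beta_{L-1};\gamma+N;\langle\mathbf{x}\rangle,\langle\mathbf{Y}\rangle\right],
\qquad N:=\langle\mathbf{n}^{(1)}\rangle+\cdots+\langle\mathbf{n}^{(L-1)}\rangle,
\]
where $\mathbf{Y}:=\bm{\sigma}^{(L-1)}\circ\cdots\circ\bm{\sigma}^{(1)}\circ\mathbf{y}$. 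We expand this factor with Lemma \ref{Lemma}, using the replacements $\alpha\to\alpha+N$, $\gamma\to\gamma+N$, $\beta\to\beta_{L-1}$, $\beta_1\to\beta_L$, $\bm{\sigma}\to\bm{\sigma}^{(L)}$ and $\mathbf{y}\to\mathbf{Y}$. The hypotheses of the lemma then hold: $\beta_L-\beta_{L-1}\notin\mathbb{Z}_{\leq0}$, and $\gamma+N\notin\mathbb{Z}_{\leq 0}$ since $\gamma\notin\mathbb{Z}_{\leq0}$ and $N\ge0$.

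The expansion produces a sum over $\mathbf{n}^{(L)}$ with coefficient $\frac{(\alpha+N)_{\langle\mathbf{n}^{(L)}\rangle}}{(\gamma+N)_{\langle\mathbf{n}^{(L)}\rangle}}(-\mathbf{x})^{\mathbf{n}^{(L)}}$. Multiply this by the outer coefficient $\frac{(\alpha)_N}{(\gamma)_N}$ and use $(\alpha)_N(\alpha+N)_M=(\alpha)_{N+M}$, and likewise for $\gamma$. This gives the Pochhammer ratio in \eqref{Th-MainR-1} with total index $\langle\mathbf{n}^{(1)}\rangle+\cdots+\langle\mathbf{n}^{(L)}\rangle$. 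The powers $(-\mathbf{x})^{\mathbf{n}^{(j)}}$ combine multiplicatively. The new Laguerre factor is
\[
L_{\mathbf{n}^{(L)}}^{(\beta_L-\beta_{L-1}-\langle\mathbf{n}^{(L)}\rangle)}\left((\mathbf{1}-\bm{\sigma}^{(L)})\circ\mathbf{Y}/\mathbf{x}\right),
\]
which is exactly the last Laguerre factor in \eqref{Th-MainR-1}. The remaining $\Phi_1$ has parameters $\alpha+N+\langle\mathbf{n}^{(L)}\rangle$, $\beta_L$ and $\gamma+N+\langle\mathbf{n}^{(L)}\rangle$, with second argument $\langle\bm{\sigma}^{(L)}\circ\mathbf{Y}\rangle$, as required. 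Substituting back into the $(L-1)$ identity and merging the sums gives \eqref{Th-MainR-1}.

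The main obstacle is justifying that the nested expansion can be regrouped into a single multiple sum, since $\mathbf{x}$ appears in denominators inside the Laguerre arguments. The cleanest route is the one used in the proof of Lemma \ref{Lemma}. After clearing $\mathbf{x}^{-\mathbf{j}}$ against $(-\mathbf{x})^{\mathbf{n}}$ (each Laguerre polynomial has degree at most $\mathbf{n}^{(j)}$ componentwise), every term becomes a polynomial in $\mathbf{x}$, $\mathbf{y}$ and the $\bm{\sigma}$'s. The full expansion is then a power series in these variables.

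One should check absolute convergence for $\langle|\mathbf{x}|\rangle<1$. Bounding each Laguerre factor by its coefficients with absolute values reduces the question to a majorant series of $\Phi_1$ type in $|\mathbf{x}|$ and $|\mathbf{y}|$, with the $\bm{\sigma}$-dependence entering only through entire exponential factors. A careful estimate is needed only at this step.

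Alternatively, the reordering issue can be bypassed entirely by iterating the integral-representation proof of Lemma \ref{Lemma}. In that approach the identity is first proved for $\Re(\gamma)>\Re(\alpha)>0$ and $\Re(\gamma)-\Re(\alpha)$ large, and then extended to the stated parameter range by analytic continuation in $\alpha$ and $\gamma$.
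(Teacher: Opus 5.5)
Your proposal is correct and follows the paper's own argument: the paper also applies Lemma \ref{Lemma} to the innermost $\Phi_1$ with $\alpha\to\alpha+N$, $\gamma\to\gamma+N$, $\beta\to\beta_{L-1}$, $\beta_1\to\beta_L$, $\bm{\sigma}\to\bm{\sigma}^{(L)}$, merges the Pochhammer ratios, and closes by induction on $L$. Your extra remarks on clearing the $\mathbf{x}^{-\mathbf{j}}$ denominators and on absolute convergence to justify regrouping the nested sums go beyond the paper, which performs the iteration formally without addressing these points.
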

\begin{proof}
	The derivation of \eqref{Th-MainR-1} is relatively simple. In fact, by repeatedly applying \eqref{Lemma-1} $L$-times, we obtain
	\begin{align*}
		&\Phi_1\left[\alpha,\beta;\gamma;\langle\mathbf{x}\rangle,\langle\mathbf{y}\rangle\right]\\
		&=\sum_{\mathbf{n}^{(1)}=\mathbf{0}}^{\infty}\frac{(\alpha)_{\langle\mathbf{n}^{(1)}\rangle}}{(\gamma)_{\langle\mathbf{n}^{(1)}\rangle}}
		(-\mathbf{x})^{\mathbf{n}^{(1)}}
		L_{\mathbf{n}^{(1)}}^{(\beta_1-\beta-\langle\mathbf{n}^{(1)}\rangle)}\left((1-\bm{\sigma}^{(1)})\circ\mathbf{y}/\mathbf{x}\right)\\
		&\hspace{1cm}\cdot \Phi_1\left[\alpha+\langle\mathbf{n}^{(1)}\rangle,\beta_1;\gamma+\langle\mathbf{n}^{(1)}\rangle;\langle\mathbf{x}\rangle,\langle\bm{\sigma}^{(1)}\circ\mathbf{y}\rangle\right]\\
		&=\sum_{\mathbf{n}^{(1)}=\mathbf{0}}^{\infty}\frac{(\alpha)_{\langle\mathbf{n}^{(1)}\rangle}}{(\gamma)_{\langle\mathbf{n}^{(1)}\rangle}}
		(-\mathbf{x})^{\mathbf{n}^{(1)}}
		L_{\mathbf{n}^{(1)}}^{(\beta_1-\beta-\langle\mathbf{n}^{(1)}\rangle)}\left((1-\bm{\sigma}^{(1)})\circ\mathbf{y}/\mathbf{x}\right)\\
		&\hspace{1cm}\cdot \sum_{\mathbf{n}^{(2)}=\mathbf{0}}^{\infty}\frac{(\alpha+\langle\mathbf{n}^{(1)}\rangle)_{\langle\mathbf{n}^{(2)}\rangle}}{(\gamma+\langle\mathbf{n}^{(1)}\rangle)_{\langle\mathbf{n}^{(2)}\rangle}}
		(-\mathbf{x})^{\mathbf{n}^{(2)}}
		L_{\mathbf{n}^{(2)}}^{(\beta_2-\beta_1-\langle\mathbf{n}^{(2)}\rangle)}\left((1-\bm{\sigma}^{(2)})\circ\bm{\sigma}^{(1)}\circ\mathbf{y}/\mathbf{x}\right)\\
		&\hspace{1cm}\cdot \Phi_1\left[\alpha+\langle\mathbf{n}^{(1)}\rangle+\langle\mathbf{n}^{(2)}\rangle,\beta_2;\gamma+\langle\mathbf{n}^{(1)}\rangle+\langle\mathbf{n}^{(2)}\rangle;\langle\mathbf{x}\rangle,\langle\bm{\sigma}^{(2)}\circ\bm{\sigma}^{(1)}\circ\mathbf{y}\rangle\right]\\
		&=\sum_{\mathbf{n}^{(1)},\mathbf{n}^{(2)}=\mathbf{0}}^{\infty}\frac{(\alpha)_{\langle\mathbf{n}^{(1)}\rangle+\langle\mathbf{n}^{(2)}\rangle}}{(\gamma)_{\langle\mathbf{n}^{(1)}\rangle+\langle\mathbf{n}^{(2)}\rangle}}
		(-\mathbf{x})^{\mathbf{n}^{(1)}+\mathbf{n}^{(2)}}
		L_{\mathbf{n}^{(1)}}^{(\beta_1-\beta-\langle\mathbf{n}^{(1)}\rangle)}\left((1-\bm{\sigma}^{(1)})\circ\mathbf{y}/\mathbf{x}\right)\\
		&\hspace{1cm}\cdot 
		L_{\mathbf{n}^{(2)}}^{(\beta_2-\beta_1-\langle\mathbf{n}^{(2)}\rangle)}\left((1-\bm{\sigma}^{(2)})\circ\bm{\sigma}^{(1)}\circ\mathbf{y}/\mathbf{x}\right)\\
		&\hspace{1cm}\cdot \Phi_1\left[\alpha+\langle\mathbf{n}^{(1)}\rangle+\langle\mathbf{n}^{(2)}\rangle,\beta_2;\gamma+\langle\mathbf{n}^{(1)}\rangle+\langle\mathbf{n}^{(2)}\rangle;\langle\mathbf{x}\rangle,\langle\bm{\sigma}^{(2)}\circ\bm{\sigma}^{(1)}\circ\mathbf{y}\rangle\right]\\
		&=\hspace{0.5cm}\cdots\\
		&=\sum_{\mathbf{n}^{(1)},\cdots,\mathbf{n}^{(L)}=\mathbf{0}}^{\infty}\frac{(\alpha)_{\langle\mathbf{n}^{(1)}\rangle+\cdots+\langle\mathbf{n}^{(L)}\rangle}}{(\gamma)_{\langle\mathbf{n}^{(1)}\rangle+\cdots+\langle\mathbf{n}^{(L)}\rangle}}
		(-\mathbf{x})^{\mathbf{n}^{(1)}+\cdots+\mathbf{n}^{(L)}}
		L_{\mathbf{n}^{(1)}}^{(\beta_1-\beta-\langle\mathbf{n}^{(1)}\rangle)}\left((1-\bm{\sigma}^{(1)})\circ\mathbf{y}/\mathbf{x}\right)\\
		&\hspace{1cm}\cdot 
		L_{\mathbf{n}^{(2)}}^{(\beta_2-\beta_1-\langle\mathbf{n}^{(2)}\rangle)}\left((1-\bm{\sigma}^{(2)})\circ\bm{\sigma}^{(1)}\circ\mathbf{y}/\mathbf{x}\right)\\
		&\hspace{1cm}\cdots 
		L_{\mathbf{n}^{(L)}}^{(\beta_L-\beta_{L-1}-\langle\mathbf{n}^{(L)}\rangle)}\left((1-\bm{\sigma}^{(L)})\circ\bm{\sigma}^{(1)}\cdots\circ\bm{\sigma}^{(L-1)}\circ\mathbf{y}/\mathbf{x}\right)\\
		&\hspace{1cm}\cdot \Phi_1\left[\alpha+\langle\mathbf{n}^{(1)}\rangle+\cdots+\langle\mathbf{n}^{(L)}\rangle,\beta_L;\gamma+\langle\mathbf{n}^{(1)}\rangle+\cdots+\langle\mathbf{n}^{(L)}\rangle;\langle\mathbf{x}\rangle,\langle\bm{\sigma}^{(L)}\circ\cdots\circ\bm{\sigma}^{(1)}\circ\mathbf{y}\rangle\right].
	\end{align*} 
	The desired Theorem \ref{Th-MainR} then follows immediately by the implementation of the principle of mathematical induction.
\end{proof}

\subsection{Special cases}

As our main result, Theorem \ref{Th-MainR} has many useful consequences. In this subsection, we will discuss these specific cases in detail.

Letting $\mathbf{x}=-\mathbf{u}$, $\mathbf{y}=-\mathbf{u}\circ(\mathbf{w}^{(1)}+\cdots+\mathbf{w}^{(L+1)})$ and $\bm{\sigma}^{(i)}=\mathbf{1}-\mathbf{w}^{(i)}/(\mathbf{w}^{(i)}+\cdots+\mathbf{w}^{(L+1)})$ in Theorem \ref{Th-MainR}, we obtain the following corollary.

\begin{corollary}\label{Cor-1}
	Let $\beta_r-\beta_{r-1}\in\mathbb{C}\setminus\mathbb{Z}_{\leq 0}$ for $r=1,\cdots,L$, where $\beta_0\equiv\beta$. Also, let $\alpha\in\mathbb{C}$, $\gamma\in\mathbb{C}\setminus\mathbb{Z}_{\leq0}$,  $\mathbf{w}^{(1)},\cdots,\mathbf{w}^{(L+1)}\in\mathbb{C}^{k}$ and $\mathbf{u}\in\mathbb{C}^k$ with $|u_1|+\cdots+|u_k|<1$. We have
	\begin{align*}
		&\Phi_1\left[\alpha,\beta;\gamma;-\langle\mathbf{u}\rangle,-\langle\mathbf{u}\circ(\mathbf{w}^{(1)}+\cdots+\mathbf{w}^{(L+1)})\rangle\right]=\sum_{\mathbf{n}^{(1)},\cdots,\mathbf{n}^{(L)}=\mathbf{0}}^{\infty}\frac{(\alpha)_{\langle\mathbf{n}^{(1)}\rangle+\cdots+\langle\mathbf{n}^{(L)}\rangle}}{(\gamma)_{\langle\mathbf{n}^{(1)}\rangle+\cdots+\langle\mathbf{n}^{(L)}\rangle}}
		\mathbf{u}^{\mathbf{n}^{(1)}+\cdots+\mathbf{n}^{(L)}}\notag\\
		&\hspace{1cm}\cdot 
		L_{\mathbf{n}^{(1)}}^{(\beta_1-\beta-\langle\mathbf{n}^{(1)}\rangle)}(\mathbf{w}^{(1)})
		\cdots 
		L_{\mathbf{n}^{(L)}}^{(\beta_L-\beta_{L-1}-\langle\mathbf{n}^{(L)}\rangle)}(\mathbf{w}^{(L)})\notag\\
		&\hspace{1cm}\cdot \Phi_1\left[\alpha+\langle\mathbf{n}^{(1)}\rangle+\cdots+\langle\mathbf{n}^{(L)}\rangle,\beta_L;\gamma+\langle\mathbf{n}^{(1)}\rangle+\cdots+\langle\mathbf{n}^{(L)}\rangle;-\langle\mathbf{u}\rangle,-\langle\mathbf{u}\circ\mathbf{w}^{(L+1)}\rangle\right].
	\end{align*}
\end{corollary}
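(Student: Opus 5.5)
We specialize Theorem \ref{Th-MainR}, as indicated just before the statement. We put $\mathbf{x}=-\mathbf{u}$ and $\mathbf{y}=-\mathbf{u}\circ\mathbf{W}$, where $\mathbf{W}:=\mathbf{w}^{(1)}+\cdots+\mathbf{w}^{(L+1)}$, and we write $\mathbf{W}^{(i)}:=\mathbf{w}^{(i)}+\cdots+\mathbf{w}^{(L+1)}$ for the tail sums. We choose $\bm{\sigma}^{(i)}=\mathbf{1}-\mathbf{w}^{(i)}/\mathbf{W}^{(i)}$, so that $\bm{\sigma}^{(i)}=\mathbf{W}^{(i+1)}/\mathbf{W}^{(i)}$ componentwise. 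Under this substitution the left-hand side of \eqref{Th-MainR-1} becomes $\Phi_1[\alpha,\beta;\gamma;-\langle\mathbf{u}\rangle,-\langle\mathbf{u}\circ\mathbf{W}\rangle]$. The prefactor $(-\mathbf{x})^{\mathbf{n}^{(1)}+\cdots+\mathbf{n}^{(L)}}$ becomes $\mathbf{u}^{\mathbf{n}^{(1)}+\cdots+\mathbf{n}^{(L)}}$, and the hypothesis $|u_1|+\cdots+|u_k|<1$ is exactly the convergence condition of the theorem.

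The key computation is a telescoping product. Since $\mathbf{W}^{(1)}=\mathbf{W}$, we get
\[
\bm{\sigma}^{(1)}\circ\cdots\circ\bm{\sigma}^{(i-1)}\circ\mathbf{y}/\mathbf{x}=\frac{\mathbf{W}^{(2)}}{\mathbf{W}^{(1)}}\circ\cdots\circ\frac{\mathbf{W}^{(i)}}{\mathbf{W}^{(i-1)}}\circ\mathbf{W}=\mathbf{W}^{(i)}.
\]
Multiplying by $\mathbf{1}-\bm{\sigma}^{(i)}=\mathbf{w}^{(i)}/\mathbf{W}^{(i)}$ shows that the argument of the $i$-th Laguerre factor is exactly $\mathbf{w}^{(i)}$, for $i=1,\dots,L$. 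In the same way,
\[
\langle\bm{\sigma}^{(L)}\circ\cdots\circ\bm{\sigma}^{(1)}\circ\mathbf{y}\rangle=-\langle\mathbf{u}\circ\mathbf{W}^{(L+1)}\rangle=-\langle\mathbf{u}\circ\mathbf{w}^{(L+1)}\rangle.
\]
So the last $\Phi_1$ factor becomes $\Phi_1[\cdots;-\langle\mathbf{u}\rangle,-\langle\mathbf{u}\circ\mathbf{w}^{(L+1)}\rangle]$, which completes the identification term by term.

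The main obstacle is legitimacy, not algebra. The quotients $\mathbf{y}/\mathbf{x}$ and $\mathbf{w}^{(i)}/\mathbf{W}^{(i)}$ require $u_j\neq0$ and $W^{(i)}_j\neq0$. We would first prove the identity under these genericity assumptions. We would then remove them by continuity or analyticity: after the substitution, both sides are entire in the $\mathbf{w}^{(i)}$ (the Laguerre factors are polynomials) and analytic in $\mathbf{u}$ on $\langle|\mathbf{u}|\rangle<1$. Term-by-term continuity is justified by locally uniform convergence, which we would obtain from the same absolute-convergence estimates underlying Lemma \ref{Lemma}, namely the expanded multiple series with nonnegative majorants. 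The excluded sets are proper analytic subsets, so the identity extends to all admissible parameters.
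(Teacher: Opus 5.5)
Your proposal is correct and matches the paper's argument: the paper obtains Corollary \ref{Cor-1} by the same substitution $\mathbf{x}=-\mathbf{u}$, $\mathbf{y}=-\mathbf{u}\circ(\mathbf{w}^{(1)}+\cdots+\mathbf{w}^{(L+1)})$, $\bm{\sigma}^{(i)}=\mathbf{1}-\mathbf{w}^{(i)}/(\mathbf{w}^{(i)}+\cdots+\mathbf{w}^{(L+1)})$ in Theorem \ref{Th-MainR}, and leaves the telescoping implicit. Your explicit telescoping check and your continuity argument that removes the genericity conditions ($u_j\neq0$ and nonvanishing components of the tail sums) are sound additions that the paper does not make.
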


Corollary \ref{Cor-1} generalizes a result of Tremblay and Lavertu \cite[p. 14, Eq. (2.5)]{Tremblay-Lavertu-1972}.

Letting further $\beta_L=0$ and $\mathbf{w}^{(L+1)}=\mathbf{0}=(0,\cdots,0)$ in Corollary \ref{Cor-1}, we then get following result. 
\begin{corollary}\label{Cor-2}
	Let $-\beta_{L-1},\beta_r-\beta_{r-1}\in\mathbb{C}\setminus\mathbb{Z}_{\leq 0}$ for $r=1,\cdots,L-1$, where $\beta_0\equiv\beta$. Also, let $\alpha\in\mathbb{C}$, $\gamma\in\mathbb{C}\setminus\mathbb{Z}_{\leq0}$,  $\mathbf{w}^{(1)},\cdots,\mathbf{w}^{(L)}\in\mathbb{C}^{k}$ and $\mathbf{u}\in\mathbb{C}^k$ with $|u_1|+\cdots+|u_k|<1$. We have
	\begin{align*}
		\Phi_1\left[\alpha,\beta;\gamma;-\langle\mathbf{u}\rangle,-\langle\mathbf{u}\circ(\mathbf{w}^{(1)}+\cdots+\mathbf{w}^{(L)})\rangle\right]
		&=\sum_{\mathbf{n}^{(1)},\cdots,\mathbf{n}^{(L)}=\mathbf{0}}^{\infty}\frac{(\alpha)_{\langle\mathbf{n}^{(1)}\rangle+\cdots+\langle\mathbf{n}^{(L)}\rangle}}{(\gamma)_{\langle\mathbf{n}^{(1)}\rangle+\cdots+\langle\mathbf{n}^{(L)}\rangle}}
		\mathbf{u}^{\mathbf{n}^{(1)}+\cdots+\mathbf{n}^{(L)}}\notag\\
		&\hspace{0.5cm}\cdot 
		L_{\mathbf{n}^{(L)}}^{(-\beta_{L-1}-\langle\mathbf{n}^{(L)}\rangle)}(\mathbf{w}^{(L)})\prod_{j=1}^{L-1}
		L_{\mathbf{n}^{(j)}}^{(\beta_j-\beta_{j-1}-\langle\mathbf{n}^{(j)}\rangle)}(\mathbf{w}^{(j)}).
	\end{align*}
\end{corollary}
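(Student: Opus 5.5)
We specialize Corollary \ref{Cor-1}, setting $\beta_L=0$ and $\mathbf{w}^{(L+1)}=\mathbf{0}$. The only real work is to collapse the trailing Humbert function. We also have to check that the hypotheses of Corollary \ref{Cor-1} survive the specialization.

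\textbf{Hypotheses.} Corollary \ref{Cor-1} requires $\beta_r-\beta_{r-1}\notin\mathbb{Z}_{\le 0}$ for $r=1,\dots,L$. With $\beta_L=0$, the condition for $r=L$ becomes $-\beta_{L-1}\notin\mathbb{Z}_{\le0}$, which is exactly what is assumed in Corollary \ref{Cor-2}. The conditions for $r=1,\dots,L-1$ are carried over unchanged.

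\textbf{Substitution.} With $\mathbf{w}^{(L+1)}=\mathbf{0}$ the sum $\mathbf{w}^{(1)}+\cdots+\mathbf{w}^{(L+1)}$ reduces to $\mathbf{w}^{(1)}+\cdots+\mathbf{w}^{(L)}$, so the left-hand side of Corollary \ref{Cor-1} is already the left-hand side of Corollary \ref{Cor-2}. The $L$-th Laguerre factor becomes $L_{\mathbf{n}^{(L)}}^{(-\beta_{L-1}-\langle\mathbf{n}^{(L)}\rangle)}(\mathbf{w}^{(L)})$, and the remaining factors are untouched.

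\textbf{Collapsing the Humbert function.} The last factor becomes
\[
\Phi_1\left[\alpha+N,0;\gamma+N;-\langle\mathbf{u}\rangle,0\right],\qquad N:=\langle\mathbf{n}^{(1)}\rangle+\cdots+\langle\mathbf{n}^{(L)}\rangle .
\]
In the double series defining $\Phi_1$, every term with $m\ge1$ vanishes because $(0)_m=0$, and every term with $n\ge1$ vanishes because its argument $y=0$. Only the term $m=n=0$ survives, so this factor equals $1$.

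\textbf{Where care is needed.} Corollary \ref{Cor-1} comes from Theorem \ref{Th-MainR} via $\bm{\sigma}^{(i)}=\mathbf{1}-\mathbf{w}^{(i)}/(\mathbf{w}^{(i)}+\cdots+\mathbf{w}^{(L+1)})$, which involves divisions. Once $\mathbf{w}^{(L+1)}=\mathbf{0}$, the denominator for $i=L$ is just $\mathbf{w}^{(L)}$. If some component of $\mathbf{w}^{(L)}$ vanishes, or if some partial sum $\mathbf{w}^{(i)}+\cdots+\mathbf{w}^{(L)}$ has a zero component, the substitution is undefined.

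The identity in Corollary \ref{Cor-1} is nonetheless an equality between two expressions, each analytic in the $\mathbf{w}$'s: the left side is an entire function of the $\mathbf{w}$'s, and the right side is a normally convergent series of polynomials in them for $\langle|\mathbf{u}|\rangle<1$. It therefore holds for generic $\mathbf{w}$ and extends to all $\mathbf{w}$ by continuity or analytic continuation.

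Alternatively, we can avoid the divisions altogether. Apply Lemma \ref{Lemma} repeatedly in its own variables, or directly re-run the coefficient comparison from the proof of Proposition \ref{Prop-1} with the generating function \eqref{Prop-1-2} and the integral representation \eqref{Prop-1-Proof-6}. Either route involves no division, so it gives the degenerate configurations directly.

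Apart from this justification, the proof is a routine substitution.
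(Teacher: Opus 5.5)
Your proposal is correct and is the paper's own argument: Corollary \ref{Cor-2} is obtained by setting $\beta_L=0$ and $\mathbf{w}^{(L+1)}=\mathbf{0}$ in Corollary \ref{Cor-1}, so the trailing factor $\Phi_1[\alpha+N,0;\gamma+N;-\langle\mathbf{u}\rangle,0]$ equals $1$ and the $r=L$ hypothesis becomes $-\beta_{L-1}\notin\mathbb{Z}_{\le 0}$. Your continuity argument for the divisions hidden in $\bm{\sigma}^{(i)}$ is extra care the paper does not take (it treats Corollary \ref{Cor-1} as valid for all $\mathbf{w}$), and it works, with two small corrections: the terms contain the entire, not polynomial, factor $\Phi_1[\cdots;-\langle\mathbf{u}\circ\mathbf{w}^{(L+1)}\rangle]$, and the route through Lemma \ref{Lemma} still divides by $\mathbf{x}$.
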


Corollary \ref{Cor-2} provides a generalization of formula \eqref{GF-TremblayLavertu}. The derivation of the next three corollaries will rely on some known hypergeometric identities.

\begin{corollary}\label{Cor-3}
	Let $a_r\in\mathbb{C}\setminus\mathbb{Z}_{\leq 0}$ $(r=1,\cdots,L+1)$ and $a_1+\cdots+a_{L+1}\in\mathbb{C}\setminus\mathbb{Z}_{\leq0}$. Also, let  $\mathbf{w}^{(1)},\cdots,\mathbf{w}^{(L+1)}\in\mathbb{C}^{k}$ and $\mathbf{u}\in\mathbb{C}^k$ with $\langle\mathbf{u}\rangle=-1$. We have
	\begin{align}\label{Cor-3-1}
		&L_m^{(a_1+\cdots+a_{L+1})}\left(-\langle\mathbf{u}\circ(\mathbf{w}^{(1)}+\cdots+\mathbf{w}^{(L+1)})\rangle\right)
		=\sum_{\mathbf{n}^{(1)},\cdots,\mathbf{n}^{(L)}
			=\mathbf{0}}^{\langle\mathbf{n}^{(1)}\rangle+\cdots+\langle\mathbf{n}^{(L)}\rangle\leq m}
		(-\mathbf{u})^{\mathbf{n}^{(1)}+\cdots+\mathbf{n}^{(L)}}
		\notag\\
		&\hspace{1cm}\cdot 
		L_{\mathbf{n}^{(1)}}^{(a_1-\langle\mathbf{n}^{(1)}\rangle)}(\mathbf{w}^{(1)})
		\cdots 
		L_{\mathbf{n}^{(L)}}^{(a_L-\langle\mathbf{n}^{(L)}\rangle)}(\mathbf{w}^{(L)})
		L_{m-\langle\mathbf{n}^{(1)}\rangle-\cdots-\langle\mathbf{n}^{(L)}\rangle}^{(a_{L+1}+\langle\mathbf{n}^{(1)}\rangle+\cdots+\langle\mathbf{n}^{(L)}\rangle)}
		(-\langle\mathbf{u}\circ\mathbf{w}^{(L+1)}\rangle).
	\end{align}
\end{corollary}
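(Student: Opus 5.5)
The plan is to specialize Corollary~\ref{Cor-1}. The key input is the classical evaluation of $\Phi_1$ at unit first argument when the numerator parameter is a negative integer. Writing
\[
\Phi_1[a,b;c;x,y]=\sum_{j\ge0}\frac{(a)_j}{(c)_j}\frac{y^j}{j!}\,{}_2F_1\left[\begin{matrix}a+j,b\\ c+j\end{matrix};x\right],
\]
as in \eqref{Prop-1-Proof-3}, I will take $a=-m$ and $x=1$ and apply the Chu--Vandermonde formula ${}_2F_1(-m+j,b;c+j;1)=(c-b)_{m-j}/(c+j)_{m-j}$. Combined with $(c-b)_{m-j}=(-1)^j(c-b)_m/(1-c+b-m)_j$, this collapses the sum to
\[
\Phi_1[-m,b;c;1,y]=\frac{(c-b)_m}{(c)_m}\,{}_1F_1\left[\begin{matrix}-m\\ 1+b-c-m\end{matrix};\pm y\right],
\]
and hence, by \eqref{Def-LaguerreP}, to an explicit constant times $L_m^{(b-c-m)}(\pm y)$. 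This is the only hypergeometric identity needed.

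Next I would apply Corollary~\ref{Cor-1} with $\alpha=-m$ and $\langle\mathbf{u}\rangle=-1$, so that the first argument $-\langle\mathbf{u}\rangle$ of every $\Phi_1$ equals $1$. The free parameters are chosen as follows:
\[
\beta_j-\beta_{j-1}=a_j\quad(j=1,\dots,L),\qquad\text{i.e.}\qquad \beta_j=\beta+a_1+\cdots+a_j .
\]
Then $\beta$ and $\gamma$ are fixed so that the left-hand $\Phi_1$ becomes $L_m^{(a_1+\cdots+a_{L+1})}$. With $N:=\langle\mathbf{n}^{(1)}\rangle+\cdots+\langle\mathbf{n}^{(L)}\rangle$, the inner $\Phi_1[-m+N,\beta_L;\gamma+N;1,\cdot]$ should become $L_{m-N}^{(a_{L+1}+N)}$. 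One has to check that the parameter-matching conditions for the outer and inner functions are compatible; both reduce to a single linear relation between $\beta$ and $\gamma$ involving $m$ and the $a_r$, which is the essential consistency check. The prefactor $(-m)_N$ kills all terms with $N>m$, which produces the truncated sum in \eqref{Cor-3-1}. Because of this termination, both sides become polynomial identities in $\mathbf{u}$ and in the $\mathbf{w}^{(i)}$. The restriction $|u_1|+\cdots+|u_k|<1$ of Corollary~\ref{Cor-1}, which is incompatible with $\langle\mathbf{u}\rangle=-1$ only through the triangle inequality, is then removed by polynomial identity / analytic continuation: I would first prove the identity with $\langle\mathbf{u}\rangle$ a free variable $t$ near $0$ (keeping $\Phi_1$'s first argument $-t$), observe that both sides are polynomials in $t$, and then set $t=-1$.

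The main obstacle is the bookkeeping of the constants. The factor $(\alpha)_N/(\gamma)_N=(-m)_N/(\gamma)_N$ from Corollary~\ref{Cor-1} must combine with the Chu--Vandermonde prefactors $(\gamma+N-\beta_L)_{m-N}/(\gamma+N)_{m-N}$ and with the normalizations $(m-N)!/(a_{L+1}+N+1)_{m-N}$ and $m!/(A+1)_m$, where $A=a_1+\cdots+a_{L+1}$. These must collapse exactly to the sign needed to turn $\mathbf{u}^{\mathbf{n}}$ into $(-\mathbf{u})^{\mathbf{n}}$, with no leftover factor. The sign of the Laguerre argument, $\pm\langle\mathbf{u}\circ\mathbf{w}\rangle$, also has to be tracked carefully.

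If the direct matching produces a stray Pochhammer ratio, I would instead fix $\gamma$ by requiring $(\gamma-\beta)_m/(\gamma)_m$ to equal $m!/(A+1)_m$ up to sign. The natural candidate for this is $\gamma=-A-m$ with $\beta=-A-m-(A+1)+\cdots$, adjusted accordingly, and I would verify the cancellation first in the case $k=L=1$. In that case the formula should reduce to the classical addition theorem $L_m^{(a+b+1)}(x+y)=\sum_n L_n^{(a)}(x)L_{m-n}^{(b)}(y)$ type, which serves as a check on signs.
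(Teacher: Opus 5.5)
Your route is the paper's: specialize Corollary \ref{Cor-1} with $\alpha=-m$, $\langle\mathbf{u}\rangle=-1$ and $\beta_j-\beta_{j-1}=a_j$, then evaluate every $\Phi_1$ at first argument $1$. However, the one hypergeometric identity you build on is wrong, and the error breaks exactly the consistency check you call essential. Chu--Vandermonde gives ${}_2F_1[-m+j,b;c+j;1]=(c-b+j)_{m-j}/(c+j)_{m-j}$, not $(c-b)_{m-j}/(c+j)_{m-j}$; for $m=2$, $j=1$ the left side is $1-b/(c+1)=(c-b+1)/(c+1)$. Using $(c-b+j)_{m-j}=(c-b)_m/(c-b)_j$ and $(c)_j(c+j)_{m-j}=(c)_m$, the sum collapses to
\[
\Phi_1[-m,b;c;1,y]=\frac{(c-b)_m}{(c)_m}\,{}_1F_1\left[\begin{matrix}-m\\ c-b\end{matrix};y\right]=\frac{m!}{(c)_m}\,L_m^{(c-b-1)}(y).
\]
So the Laguerre parameter is $c-b-1$ and the argument is $+y$. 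Your ${}_1F_1[-m;1+b-c-m;\pm y]$ already differs from this in the linear term when $m=2$. With your version, the inner factor $\Phi_1[-m+N,\beta_L;\gamma+N;1,\cdot]$ would be a multiple of $L_{m-N}^{(\beta_L-\gamma-m)}$, where $N=\langle\mathbf{n}^{(1)}\rangle+\cdots+\langle\mathbf{n}^{(L)}\rangle$. That parameter does not depend on $N$, so it can never equal $L_{m-N}^{(a_{L+1}+N)}$, and no choice of $\beta,\gamma$ works. This is also why your fallback $\gamma=-A-m$ is the wrong normalization.

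With the corrected evaluation the plan goes through at once. The outer requirement $\gamma-\beta-1=A$ and the inner requirement $\gamma+N-\beta_L-1=a_{L+1}+N$ are the same relation $\gamma-\beta=A+1$. This is the paper's choice, and $\gamma\notin\mathbb{Z}_{\le0}$ is otherwise free. The constants then combine as
\[
\frac{(-m)_N}{(\gamma)_N}\cdot\frac{(m-N)!}{(\gamma+N)_{m-N}}=\frac{(-1)^N\,m!}{(\gamma)_m},
\]
against $m!/(\gamma)_m$ on the left. Cancelling $m!/(\gamma)_m$ yields \eqref{Cor-3-1} with the factor $(-\mathbf{u})^{\mathbf{n}^{(1)}+\cdots+\mathbf{n}^{(L)}}$.

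The paper reaches the same point through the non-terminating formula $\Phi_1[a,b;c;1,y]=\frac{\Gamma(c)\Gamma(c-a-b)}{\Gamma(c-a)\Gamma(c-b)}{}_1F_1[a;c-b;y]$. That formula carries side conditions $\Re(\gamma-\alpha-\beta)>0$ and $\Re(\gamma-\alpha-\beta_L)>0$ that are not among the hypotheses; the terminating Chu--Vandermonde evaluation avoids them.

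Your polynomial-identity argument for reaching $\langle\mathbf{u}\rangle=-1$, despite the hypothesis $|u_1|+\cdots+|u_k|<1$ of Corollary \ref{Cor-1}, is correct. With $\alpha=-m$ both sides are polynomials in $\mathbf{u}$ that agree on an open set. This covers a step the paper leaves implicit.

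Finally, for $k=L=1$ the statement reduces to Tremblay--Lavertu's \eqref{Cor-3-2}, with parameters $a_1-n$ and $a_2+n$, not to the classical addition theorem. Use that formula for your sign check.
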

\begin{proof}
	Letting $\langle\mathbf{u}\rangle=-1$ in Corollary \ref{Cor-1} and taking into account the formula (\cite[p. 15, Eq. (3.5)]{Tremblay-Lavertu-1972})
	\[
	\Phi_1[a,b;c;1,y]=
	\frac{\Gamma(c)\Gamma(c-a-b)}{\Gamma(c-a)\Gamma(c-b)}
	{}_{1}F_{1}\left[\begin{matrix}
		a\\
		c-b
	\end{matrix};y\right],\quad \Re(c-a-b)>0,
	\]
	we obtain 
	\begin{align}\label{Cor-3-Proof-1}
		&\frac{\Gamma(\gamma-\beta_L)\Gamma(\gamma-\alpha-\beta)}{\Gamma(\gamma-\alpha-\beta_L)\Gamma(\gamma-\beta)}
		{}_{1}F_{1}\left[\begin{matrix}
			\alpha\\
			\gamma-\beta
		\end{matrix};-\langle\mathbf{u}\circ(\mathbf{w}^{(1)}+\cdots+\mathbf{w}^{(L+1)})\rangle\right]\notag\\
		&\hspace{1cm}=\sum_{\mathbf{n}^{(1)},\cdots,\mathbf{n}^{(L)}=\mathbf{0}}^{\infty}\frac{(\alpha)_{\langle\mathbf{n}^{(1)}\rangle+\cdots+\langle\mathbf{n}^{(L)}\rangle}}{(\gamma-\beta_L)_{\langle\mathbf{n}^{(1)}\rangle+\cdots+\langle\mathbf{n}^{(L)}\rangle}}
		\mathbf{u}^{\mathbf{n}^{(1)}+\cdots+\mathbf{n}^{(L)}}
		L_{\mathbf{n}^{(1)}}^{(\beta_1-\beta-\langle\mathbf{n}^{(1)}\rangle)}(\mathbf{w}^{(1)})
		\cdots 
		L_{\mathbf{n}^{(L)}}^{(\beta_L-\beta_{L-1}-\langle\mathbf{n}^{(L)}\rangle)}(\mathbf{w}^{(L)})\notag\\
		&\hspace{1.5cm}\cdot
		{}_{1}F_{1}\left[\begin{matrix}
			\alpha+\langle\mathbf{n}^{(1)}\rangle+\cdots+\langle\mathbf{n}^{(L)}\rangle\\
			\gamma-\beta_L+\langle\mathbf{n}^{(1)}\rangle+\cdots+\langle\mathbf{n}^{(L)}\rangle
		\end{matrix};-\langle\mathbf{u}\circ\mathbf{w}^{(L+1)}\rangle\right],
	\end{align}
	where $\Re(\gamma-\beta-\alpha)>0$ and $\Re(\gamma-\beta_L-\alpha)>0$. 
	
	Next, let $\alpha=-m$ $(m\in\mathbb{Z}_{\geq0})$, $\beta_r-\beta_{r-1}=a_r\in\mathbb{C}\setminus\mathbb{Z}_{\leq0}$  $(r=1,\cdots,L; ~\beta_0\equiv\beta)$ and 
	\[
	\gamma-\beta=a_1+\cdots+a_L+a_{L+1}+1,
	\]  
	in \eqref{Cor-3-Proof-1}. Then $\gamma-\beta_L=a_{L+1}+1$ and thus \eqref{Cor-3-Proof-1} becomes
	\begin{align}\label{Cor-3-Proof-2}
		&\frac{(a_1+\cdots+a_{L+1}+1)_m}{(a_{L+1}+1)_m}
		{}_{1}F_{1}\left[\begin{matrix}
			-m\\
			a_1+\cdots+a_{L+1}+1
		\end{matrix};-\langle\mathbf{u}\circ(\mathbf{w}^{(1)}+\cdots+\mathbf{w}^{(L+1)})\rangle\right]\notag\\
		&\hspace{1cm}=\sum_{\mathbf{n}^{(1)},\cdots,\mathbf{n}^{(L)}
			=\mathbf{0}}^{\langle\mathbf{n}^{(1)}\rangle+\cdots+\langle\mathbf{n}^{(L)}\rangle\leq m}\frac{(-m)_{\langle\mathbf{n}^{(1)}\rangle+\cdots+\langle\mathbf{n}^{(L)}\rangle}}{(a_{L+1}+1)_{\langle\mathbf{n}^{(1)}\rangle+\cdots+\langle\mathbf{n}^{(L)}\rangle}}
		\mathbf{u}^{\mathbf{n}^{(1)}+\cdots+\mathbf{n}^{(L)}}
		L_{\mathbf{n}^{(1)}}^{(a_1-\langle\mathbf{n}^{(1)}\rangle)}(\mathbf{w}^{(1)})
		\cdots 
		L_{\mathbf{n}^{(L)}}^{(a_L-\langle\mathbf{n}^{(L)}\rangle)}(\mathbf{w}^{(L)})\notag\\
		&\hspace{1.5cm}\cdot
		{}_{1}F_{1}\left[\begin{matrix}
			-m+\langle\mathbf{n}^{(1)}\rangle+\cdots+\langle\mathbf{n}^{(L)}\rangle\\
			a_{L+1}+1+\langle\mathbf{n}^{(1)}\rangle+\cdots+\langle\mathbf{n}^{(L)}\rangle
		\end{matrix};-\langle\mathbf{u}\circ\mathbf{w}^{(L+1)}\rangle\right].
	\end{align}
	
	Finally, interpreting the terminated ${}_{1}F_{1}$ functions in \eqref{Cor-3-Proof-2} as the Laguerre polynomial via \eqref{Def-LaguerreP} yields the desired result \eqref{Cor-3-1}.
\end{proof}
\begin{remark}
	When $k=1$ in Corallary \ref{Cor-3}, we obtain the following Tremblay and Lavertu's result \cite[p. 16, Eq. (3.9)]{Tremblay-Lavertu-1972}:
	\begin{equation}\label{Cor-3-2}
		L_m^{(a_1+\cdots+a_{L+1})}\left(\sum_{j=1}^{L+1}w_j\right)
		=\sum_{n_1,\cdots,n_L=0}^{n_1+\cdots+n_L\leq m}
		L_{n_1}^{(a_1-n_1)}(w_1)
		\cdots 
		L_{n_L}^{(a_L-n_L)}(w_L)
		L_{m-n_1-\cdots-n_L}^{(a_{L+1}+n_1+\cdots+n_L)}
		(w_{L+1}).
	\end{equation}
	The formula \eqref{Cor-3-2} is a remarkable result in the sense that it can be regarded as a generalization of the usual addition theorem (\cite[p. 460, Eq. (18.18.10)]{NIST Handbook}) for the Laguerre polynomials \eqref{Def-LaguerreP}. In fact, our Corollary \ref{Cor-3} is quite an interesting result since the left-hand side of \eqref{Cor-3-1} consists of a Laguerre polynomial of sum of finite number of variables while the right-hand side of \eqref{Cor-3-1} involves the product of multivariate Laguerre polynomials.
\end{remark}

\begin{corollary}\label{Cor-4}
	Let $\mathbf{u}, \mathbf{w}^{(1)}, \cdots, \mathbf{w}^{(L)} \in \mathbb{C}^k$ with $\langle \mathbf{u} \rangle =1$. Also, let $\Re(\beta_L)<1$ and $\alpha-\beta_L+1 \in \mathbb{C}\setminus\mathbb{Z}_{\leq 0}$. We have
	\begin{align}\label{Cor-4-1}
		&\Phi_1\left[\alpha,\beta;\alpha-\beta_L+1;-1, -\langle \mathbf{u} \circ (\mathbf{w}^{(1)}+\cdots+\mathbf{w}^{(L)}) \rangle\right]
		=\frac{\Gamma(\alpha-\beta_L+1)}{\Gamma(\alpha)}\notag\\
		&\hspace{1cm} \cdot\sum_{\mathbf{n}^{(1)},\cdots,\mathbf{n}^{(L)}=\mathbf{0}}^{\infty} \frac{\Gamma\left(\frac{1}{2}(\alpha+\langle\mathbf{n}^{(1)}\rangle +\cdots+ \langle\mathbf{n}^{(L)}\rangle)+1\right)~\mathbf{u}^{\mathbf{n}^{(1)}+\cdots+\mathbf{n}^{(L)}} }{\Gamma\left(\frac{1}{2}(\alpha+\langle\mathbf{n}^{(1)}\rangle +\cdots+ \langle\mathbf{n}^{(L)}\rangle)-\beta_L+1\right)~ (\alpha+\langle\mathbf{n}^{(1)}\rangle +\cdots+ \langle\mathbf{n}^{(L)}\rangle)}\notag\\
		&\hspace{1cm}\cdot 
		L_{\mathbf{n}^{(1)}}^{(\beta_1-\beta-\langle \mathbf{n}^{(1)} \rangle)}(\mathbf{w}^{(1)}) \cdots L_{\mathbf{n}^{(L)}}^{(\beta_L-\beta_{L-1}-\langle \mathbf{n}^{(L)} \rangle)}(\mathbf{w}^{(L)}).
	\end{align}
\end{corollary}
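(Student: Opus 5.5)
The plan is to specialize Corollary \ref{Cor-1} and then evaluate the inner Humbert function in closed form by Kummer's summation theorem. In Corollary \ref{Cor-1} I take $\mathbf{w}^{(L+1)}=\mathbf{0}$ and $\gamma=\alpha-\beta_L+1$. Writing $N:=\langle\mathbf{n}^{(1)}\rangle+\cdots+\langle\mathbf{n}^{(L)}\rangle$, the last factor becomes
\[
\Phi_1\left[\alpha+N,\beta_L;\alpha+N-\beta_L+1;-\langle\mathbf{u}\rangle,0\right]
={}_{2}F_{1}\left[\begin{matrix}\alpha+N,\beta_L\\ 1+(\alpha+N)-\beta_L\end{matrix};-\langle\mathbf{u}\rangle\right].
\]
This ${}_2F_1$ is exactly of the well-poised form $(a,b;1+a-b)$, which Kummer's theorem sums at argument $-1$.

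The key steps, in order, are as follows.

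First, I would establish the specialized identity for $\mathbf{u}$ replaced by $t\mathbf{u}$ with $0<t<1$ small. In that range $\sum_j|tu_j|<1$, so Corollary \ref{Cor-1} applies directly.

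Second, I would let $t\to1^-$, so that $\langle t\mathbf{u}\rangle=t\to1$. On the right I use Kummer's formula
\[
{}_{2}F_{1}\left[\begin{matrix}a,b\\1+a-b\end{matrix};-1\right]=\frac{\Gamma(1+a-b)\,\Gamma(1+\tfrac a2)}{\Gamma(1+a)\,\Gamma(1+\tfrac a2-b)},
\]
with $a=\alpha+N$ and $b=\beta_L$. This is valid at $-1$ because $\Re(c-a-b)=1-2\Re(\beta_L)>-1$, which holds since $\Re(\beta_L)<1$. The left side tends to $\Phi_1[\alpha,\beta;\alpha-\beta_L+1;-1,-\langle\mathbf{u}\circ(\mathbf{w}^{(1)}+\cdots+\mathbf{w}^{(L)})\rangle]$, which is the left side of \eqref{Cor-4-1}.

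Third, I collect the Gamma factors:
\[
\frac{(\alpha)_N}{(\gamma)_N}\cdot\frac{\Gamma(\gamma+N)}{\Gamma(1+\alpha+N)}
=\frac{\Gamma(\gamma)\,\Gamma(\alpha+N)}{\Gamma(\alpha)\,\Gamma(\alpha+N+1)}
=\frac{\Gamma(\alpha-\beta_L+1)}{\Gamma(\alpha)\,(\alpha+N)}.
\]
Multiplying by the remaining factor $\Gamma(1+\tfrac{\alpha+N}{2})/\Gamma(1+\tfrac{\alpha+N}{2}-\beta_L)$ gives exactly the coefficient in \eqref{Cor-4-1}.

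The main obstacle is the analytic step, because $\langle\mathbf{u}\rangle=1$ forces $|u_1|+\cdots+|u_k|\ge1$. The hypothesis of Corollary \ref{Cor-1} is therefore never satisfied, and \eqref{Cor-4-1} must be understood as a boundary value or continuation. I would first try to justify the limit $t\to1^-$ by an Abel-type argument. The left side is entire in its second argument and continuous in the first up to $-1$ whenever the defining series for $\Phi_1$ converges there. On the right, each inner ${}_2F_1$ is continuous up to $-1$ under $\Re(\beta_L)<1$. One then needs a uniform bound on the multiple series in $t$, for instance by dominating the polynomial factors $L_{\mathbf{n}^{(j)}}$ through their explicit finite-sum form. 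If uniform domination fails, I would fall back on interpreting \eqref{Cor-4-1} by analytic continuation in $\mathbf{u}$ along the hyperplane $\langle\mathbf{u}\rangle=1$, or as a formal identity, in the same spirit as the $\cong$ computation of Section \ref{Sec-GF-Type-I}.
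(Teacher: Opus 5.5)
Your formal computation is exactly the paper's proof. The paper sets $\langle\mathbf{u}\rangle=1$, $\mathbf{w}^{(L+1)}=\mathbf{0}$, $\gamma=\alpha-\beta_L+1$ in Corollary~\ref{Cor-1}, sums ${}_2F_1[\alpha+N,\beta_L;\gamma+N;-1]$ by Kummer's theorem, and simplifies the Gamma factors as you do. It never addresses that $\langle\mathbf{u}\rangle=1$ lies outside $|u_1|+\cdots+|u_k|<1$. You noticed this, but the repair you sketch does not work.

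With $\mathbf{u}\mapsto t\mathbf{u}$, Corollary~\ref{Cor-1} gives the identity only for $t<1/(|u_1|+\cdots+|u_k|)$, and this bound is $<1$ unless every $u_j\ge 0$. Beyond it the multiple series genuinely diverges. So no uniform bound can exist, and the series does not define a function of $t$ near $t=1$. For example, take $k=2$, $L=1$, $\mathbf{u}=(2,-1)$. We have $L^{(a)}_{(n,0)}(w_1,w_2)=L^{(a)}_n(w_1)$ and $L_n^{(\beta_1-\beta-n)}(w_1)\sim(-1)^n\frac{(\beta-\beta_1)_n}{n!}\mathrm{e}^{w_1}$. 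Hence the terms with $\mathbf{n}^{(1)}=(n,0)$ have size $(2t)^n$ times a power of $n$. Indeed the right side of \eqref{Cor-4-1} itself diverges as an unordered multiple series for such $\mathbf{u}$. Also, convergence at the boundary depends on $\beta$, not only on $\beta_L$, so continuity of each inner ${}_2F_1$ up to $-1$ is not enough.

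The missing idea is to sum by the degrees $N_j=\langle\mathbf{n}^{(j)}\rangle$. Replacing $\mathbf{u}$ by $s\mathbf{u}$ in \eqref{Prop-1-2} and comparing with \eqref{GF-Laguerre-2} gives, for $\langle\mathbf{u}\rangle=1$ and any admissible $b$,
\[
\sum_{\langle\mathbf{n}\rangle=N}\mathbf{u}^{\mathbf{n}}L_{\mathbf{n}}^{(-b-N)}(\mathbf{w})=L_N^{(-b-N)}\big(\langle\mathbf{u}\circ\mathbf{w}\rangle\big).
\]
So \eqref{Cor-4-1}, read in this order, is just its own $k=1$ case at $w_j=\langle\mathbf{u}\circ\mathbf{w}^{(j)}\rangle$. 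Collecting further $N_1+\cdots+N_L=N$, the generating functions telescope to $\mathrm{e}^{-sW}(1+s)^{\beta_L-\beta}$. The product of Laguerre polynomials therefore becomes $\Lambda_N:=L_N^{(\beta_L-\beta-N)}(W)$, with $W=w_1+\cdots+w_L$.

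Now $\Lambda_N\sim(-1)^N\frac{(\beta-\beta_L)_N}{N!}\mathrm{e}^{W}$, and the Kummer coefficient is of order $N^{\beta_L-1}$. So the $N$-th term behaves like a constant times $(-1)^N N^{\beta-2}$. An extra hypothesis such as $\Re(\beta)<2$, absent from the statement, is therefore needed.

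Under that hypothesis, put an Abel factor $r^N$ on this one-variable series instead of deforming $\mathbf{u}$. With $t\mathbf{u}$ the parameter also sits inside ${}_2F_1(-t)$, so you never have a genuine power series. Euler's integral gives
\[
\frac{(\alpha)_N}{(\gamma)_N}\,{}_2F_1\left[\begin{matrix}\alpha+N,\beta_L\\ \gamma+N\end{matrix};-1\right]=\frac{\Gamma(\gamma)}{\Gamma(\alpha)\Gamma(1-\beta_L)}\int_0^1 s^{\alpha+N-1}(1-s^2)^{-\beta_L}\,\mathrm{d}s,
\]
which is Kummer's value. Hence for $0\le r<1$ the series equals
\[
\frac{\Gamma(\gamma)}{\Gamma(\alpha)\Gamma(1-\beta_L)}\int_0^1 s^{\alpha-1}(1-s^2)^{-\beta_L}\mathrm{e}^{-rsW}(1+rs)^{\beta_L-\beta}\,\mathrm{d}s.
\]
Abel's theorem and dominated convergence (for $\Re(\alpha)>0$) then give $\Phi_1[\alpha,\beta;\gamma;-1,-W]$ via \eqref{Prop-1-Proof-6}.
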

\begin{proof}
	Letting $\langle\mathbf{u}\rangle=1, \mathbf{w}^{(L+1)}=\mathbf{0}=(0,\dots,0)$ and $\gamma=\alpha-\beta_L+1$ in Corollary \ref{Cor-1}, we have
	\begin{align}\label{Cor-4-Proof-1}
		&\Phi_1\left[\alpha,\beta;\alpha-\beta_L+1;-1, -\langle \mathbf{u} \circ (\mathbf{w}^{(1)}+\cdots+\mathbf{w}^{(L)}) \rangle\right] \notag\\
		&\hspace{1cm}=\sum_{\mathbf{n}^{(1)},\cdots,\mathbf{n}^{(L)}=\mathbf{0}}^{\infty} \frac{(\alpha)_{\langle \mathbf{n}^{(1)} \rangle+\cdots+\langle \mathbf{n}^{(L)} \rangle}}{(\alpha-\beta_L+1)_{\langle \mathbf{n}^{(1)} \rangle+\cdots+\langle \mathbf{n}^{(L)} \rangle}} \mathbf{u}^{\mathbf{n}^{(1)}+\cdots+\mathbf{n}^{(L)}} L_{\mathbf{n}^{(1)}}^{(\beta_1-\beta-\langle \mathbf{n}^{(1)} \rangle)}(\mathbf{w}^{(1)})\notag \\
		&\hspace{1.5cm} \cdots L_{\mathbf{n}^{(L)}}^{(\beta_L-\beta_{L-1}-\langle \mathbf{n}^{(L)} \rangle)}(\mathbf{w}^{(L)})  {}_2F_1 \left[ \begin{matrix}
			\alpha+\langle \mathbf{n}^{(1)} \rangle+\cdots+\langle \mathbf{n}^{(L)} \rangle,\beta_L \\
			\alpha-\beta_L+\langle \mathbf{n}^{(1)} \rangle+\cdots+\langle \mathbf{n}^{(L)} \rangle+1
		\end{matrix}; -1 \right] .
	\end{align}
	The hypergeometric sum in \eqref{Cor-4-Proof-1} can be evaluated with the help of Kummer's summation theorem \cite[p. 68, Eq. (2)]{Rainville-Book-1960}:
	\[
	{}_2F_1
	\left[ \begin{matrix}
		a, b\\
		a-b+1
	\end{matrix}; -1\right]
	=\frac{\Gamma(a-b+1) \Gamma(\frac{a}{2}+1)}{\Gamma(\frac{a}{2}-b+1)\Gamma(a+1)},
	~\Re(b)<1,~1+a-b\in\mathbb{C}\setminus\mathbb{Z}_{\leq 0}. 
	\]
	After a little simplification, we obtain \eqref{Cor-4-1}.
\end{proof}
\begin{remark}
	If $k=1$ in Corollary \ref{Cor-4}, then it follows that $u=1$ and thus \eqref{Cor-4-1} reduces to Tremblay and Lavertu's result \cite[p. 15, Eq. (3.4)]{Tremblay-Lavertu-1972}.
\end{remark}

\begin{corollary}\label{Cor-5}
	Let $\mathbf{u}, \mathbf{w}^{(1)}, \cdots, \mathbf{w}^{(L)} \in \mathbb{C}^k$ with $\langle \mathbf{u} \rangle =1$. Also, let $\Re(\beta)<1$ and $\alpha-\beta+1 \in \mathbb{C}\setminus\mathbb{Z}_{\leq 0}$. We have
	\begin{align*}
		&\frac{\Gamma(\frac{\alpha}{2})}{2\Gamma(\frac{\alpha}{2}-\beta+1)} {}_1F_2 \left[\begin{matrix}
			\frac{\alpha}{2} \\
			\frac{1}{2}, \frac{\alpha}{2}-\beta+1
		\end{matrix}; \frac{\langle \mathbf{u} \circ (\mathbf{w}^{(1)}+\cdots+\mathbf{w}^{(L)}) \rangle^2}{4} \right]\notag\\
		&\hspace{1cm}- \frac{\Gamma(\frac{\alpha}{2}+\frac{1}{2})}{2\Gamma(\frac{\alpha}{2}-\beta+\frac{3}{2})} \langle \mathbf{u} \circ (\mathbf{w}^{(1)}+\cdots+\mathbf{w}^{(L)}) \rangle \cdot {}_1F_2\left[ \begin{matrix}
			\frac{\alpha}{2}+\frac{1}{2} \\
			\frac{3}{2}, \frac{\alpha}{2}-\beta+\frac{3}{2}
		\end{matrix}; \frac{\langle \mathbf{u} \circ (\mathbf{w}^{(1)}+\cdots+\mathbf{w}^{(L)}) \rangle^2}{4} \right]\notag\\
		&=\sum_{\mathbf{n}^{(1)},\cdots,\mathbf{n}^{(L)}=\mathbf{0}}^{\infty} \frac{\Gamma\left(\frac{1}{2}(\alpha+\langle\mathbf{n}^{(1)}\rangle +\cdots+ \langle\mathbf{n}^{(L)}\rangle)+1\right)~\mathbf{u}^{\mathbf{n}^{(1)}+\cdots+\mathbf{n}^{(L)}} }{\Gamma\left(\frac{1}{2}(\alpha+\langle\mathbf{n}^{(1)}\rangle +\cdots+ \langle\mathbf{n}^{(L)}\rangle)-\beta+1\right)~ (\alpha+\langle\mathbf{n}^{(1)}\rangle +\cdots+ \langle\mathbf{n}^{(L)}\rangle)} \notag\\
		&\hspace{1cm}\cdot L_{\mathbf{n}^{(1)}}^{(\beta_1-\beta-\langle \mathbf{n}^{(1)} \rangle)}(\mathbf{w}^{(1)}) \cdots L_{\mathbf{n}^{(L-1)}}^{(\beta_{L-1}-\beta_{L-2}-\langle \mathbf{n}^{(L-1)} \rangle)}(\mathbf{w}^{(L-1)}) L_{\mathbf{n}^{(L)}}^{(\beta-\beta_{L-1}-\langle \mathbf{n}^{(L)} \rangle)}(\mathbf{w}^{(L)}) .
	\end{align*}
\end{corollary}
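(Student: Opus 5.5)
Corollary \ref{Cor-5} comes from Corollary \ref{Cor-4} by choosing $\beta_L=\beta$, which makes the $L$-th Laguerre factor $L_{\mathbf{n}^{(L)}}^{(\beta-\beta_{L-1}-\langle\mathbf{n}^{(L)}\rangle)}(\mathbf{w}^{(L)})$, exactly as in the statement. With $\beta_L=\beta$ the hypotheses of Corollary \ref{Cor-4} become $\Re(\beta)<1$ and $\alpha-\beta+1\notin\mathbb{Z}_{\leq0}$, matching those of Corollary \ref{Cor-5}. The right-hand side of \eqref{Cor-4-1} then equals $\frac{\Gamma(\alpha-\beta+1)}{\Gamma(\alpha)}$ times the series in Corollary \ref{Cor-5}. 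So the task reduces to evaluating $\Phi_1[\alpha,\beta;\alpha-\beta+1;-1,-Y]$ in closed form, where $Y:=\langle\mathbf{u}\circ(\mathbf{w}^{(1)}+\cdots+\mathbf{w}^{(L)})\rangle$. Multiplying by $\frac{\Gamma(\alpha)}{\Gamma(\alpha-\beta+1)}$ should then give the stated combination of two ${}_1F_2$ functions.

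To evaluate this $\Phi_1$, I would expand in powers of $y$ as in \eqref{Prop-1-Proof-3}:
\[
\Phi_1[a,b;c;x,y]=\sum_{j\geq0}\frac{(a)_j}{(c)_j}\frac{y^j}{j!}\,{}_2F_1\left[\begin{matrix}a+j,b\\ c+j\end{matrix};x\right].
\]
With $a=\alpha$, $b=\beta$, $c=\alpha-\beta+1$ and $x=-1$, each inner ${}_2F_1$ has the Kummer form $(a',b;a'-b+1;-1)$ with $a'=\alpha+j$. Kummer's summation theorem, already used in the proof of Corollary \ref{Cor-4}, gives
\[
\frac{\Gamma(\alpha+j-\beta+1)\Gamma(\frac{\alpha+j}{2}+1)}{\Gamma(\frac{\alpha+j}{2}-\beta+1)\Gamma(\alpha+j+1)}.
\]
Combining this with $(\alpha)_j/(\alpha-\beta+1)_j$ and the prefactor $\Gamma(\alpha)/\Gamma(\alpha-\beta+1)$, the $j$-th term becomes
\[
\frac{\Gamma(\frac{\alpha+j}{2}+1)}{(\alpha+j)\,\Gamma(\frac{\alpha+j}{2}-\beta+1)}\frac{(-Y)^j}{j!}
=\frac{\Gamma(\frac{\alpha+j}{2})}{2\,\Gamma(\frac{\alpha+j}{2}-\beta+1)}\frac{(-Y)^j}{j!},
\]
using $\Gamma(z+1)=z\Gamma(z)$ with $z=\frac{\alpha+j}{2}$.

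Next I would split the sum by parity. For $j=2m$, apply $\Gamma(\frac{\alpha}{2}+m)=\Gamma(\frac{\alpha}{2})(\frac{\alpha}{2})_m$, the analogous identity in the denominator, and $(2m)!=4^m m!(\frac12)_m$. This yields $\frac{\Gamma(\alpha/2)}{2\Gamma(\alpha/2-\beta+1)}\,{}_1F_2[\frac{\alpha}{2};\frac12,\frac{\alpha}{2}-\beta+1;Y^2/4]$. For $j=2m+1$, use $(2m+1)!=4^m m!(\frac32)_m$. This yields the second term, with its minus sign and factor $Y$, which is the ${}_1F_2$ with parameters $\frac{\alpha}{2}+\frac12;\frac32,\frac{\alpha}{2}-\beta+\frac32$.

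The main obstacle is justification rather than algebra. The point $x=-1$ lies on the boundary of the disc $|x|<1$ where $\Phi_1$ is defined by its double series, so it has to be understood through the $j$-expansion. Each ${}_2F_1$ at $-1$ converges under the condition $\Re(\beta)<1$, which is exactly Kummer's condition, and the resulting series in $j$ is entire in $Y$ because of the $1/j!$ decay. I would take this $j$-expansion as the meaning of $\Phi_1$ at $x=-1$, consistent with how Corollary \ref{Cor-4} was derived. I would also check that the sign convention $-Y$ carries through correctly to produce the minus sign in front of the odd part.
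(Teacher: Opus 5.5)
Your proof is correct and follows the paper's route: set $\beta_L=\beta$ in Corollary~\ref{Cor-4} and evaluate $\Phi_1[\alpha,\beta;\alpha-\beta+1;-1,-Y]$ in closed form. The only difference is that the paper quotes this evaluation from Hang--Hu--Luo (their Eq.~(2.9)), whereas you derive it yourself by termwise Kummer summation followed by a parity split; your computation checks out, including the minus sign that comes from $y=-Y$.
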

\begin{proof}
	The result follows by letting $\beta_L=\beta$ in Corollary \ref{Cor-4} and then making use of the formula \cite[p. 4, Eq. (2.9)]{Hang-Hu-Luo-2026}:
	\begin{align*}
		&\Phi_1\left[a,b;a-b+1;-1,y\right]
		=\frac{\Gamma(a-b+1)}{2\,\Gamma(a)}\Bigg\{
		\frac{\Gamma(\frac{a}{2})}{\Gamma(\frac{a}{2}-b+1)}
		{}_1F_2\left[\begin{matrix}
			\frac{a}{2}\\
			\frac{1}{2},\frac{a}{2}-b+1
		\end{matrix};\frac{y^2}{4}\right]\\
		&\hspace{1cm}+\frac{\Gamma(\frac{a}{2}+\frac{1}{2})}{\Gamma(\frac{a}{2}-b+\frac{3}{2})}\,y\cdot{}_1F_2\left[\begin{matrix}
			\frac{a}{2}+\frac{1}{2}\\
			\frac{3}{2},\frac{a}{2}-b+\frac{3}{2}
		\end{matrix};\frac{y^2}{4}\right]
		\Bigg\}, \quad \Re(b)<1,~a-b+1\in\mathbb{C}\setminus\mathbb{Z}_{\geq0}.	
	\end{align*}
\end{proof}
\begin{remark}
	In particular, when $k=1$ in Corollary \ref{Cor-5}, we obtain the following interesting result: 
	\begin{align*}
		&\frac{\Gamma(\frac{\alpha}{2})}{2\Gamma(\frac{\alpha}{2}-\beta+1)} {}_1F_2 \left[\begin{matrix}
			\frac{\alpha}{2} \\
			\frac{1}{2}, \frac{\alpha}{2}-\beta+1
		\end{matrix}; \frac{(w_1+\cdots+w_L)^2}{4} \right]\notag\\
		&\hspace{1cm}- \frac{\Gamma(\frac{\alpha}{2}+\frac{1}{2})}{2\Gamma(\frac{\alpha}{2}-\beta+\frac{3}{2})} (w_1+\cdots+w_L) \cdot {}_1F_2\left[ \begin{matrix}
			\frac{\alpha}{2}+\frac{1}{2} \\
			\frac{3}{2}, \frac{\alpha}{2}-\beta+\frac{3}{2}
		\end{matrix}; \frac{(w_1+\cdots+w_L)^2}{4} \right]\notag\\
		&=\sum_{n_1,\cdots,n_L=0}^{\infty} \frac{\Gamma\left(\frac{1}{2}(\alpha+n_1+\cdots+n_L)+1\right) }{\Gamma\left(\frac{1}{2}(\alpha+n_1+\cdots+n_L)-\beta+1\right) (\alpha+n_1+\cdots+n_L)} \notag\\
		&\hspace{1cm}\cdot L_{n_1}^{(\beta_1-\beta-n_1)}(w_1) \cdots L_{n_{L-1}}^{(\beta_{L-1}-\beta_{L-2}-n_{L-1})}(w_{L-1}) L_{n_L}^{(\beta-\beta_{L-1}-n_L)}(w_L) .
	\end{align*}
\end{remark}

\section{Product formulas for $L_{\mathbf{n}}^{(\alpha)}(\mathbf{x})$}\label{Sec-ProductFormulas}

The representation of the product of two special functions as an integral in terms of functions of the same class has long been a subject of considerable interest, and even to this day, research along this direction continues to produce significant results (see, for example, \cite{Cohl-Durand-2026}). There are several important product formulas for Laguerre polynomials $L_n^{(\alpha)}(x)$, which have useful applications in many branches of mathematics. Among all these formulas, perhaps the most well-known is \emph{Watson's formula} (see \cite{Watson-1939}; see also \cite{Koornwinder-1977} and \cite{Stempak-1988})
\begin{align}\label{WatsonIntegral-1939}
	L_n^{(\alpha)}(x^2)L_n^{(\alpha)}(y^2)
	&=\frac{2^{\alpha}\Gamma(\alpha+1+n)}{\sqrt{2\pi}~n!}\int_0^\pi L_n^{(\alpha)}(x^2+y^2+2xy\cos\theta)\notag\\
	&\hspace{2cm}\cdot \mathrm{e}^{-xy\cos\theta}j_{\alpha-1/2}(xy\sin\theta)\sin^{2\alpha}\theta\mathrm{d}\theta,
\end{align}
where $\alpha>-1/2$, $x,y>0$, and $j_\alpha(x):=x^{-\alpha}J_{\alpha}(x)$ with $J_{\alpha}(x)$ being the Bessel function. Note that \eqref{WatsonIntegral-1939} can also be written as a double integral. In another article of Watson \cite{Watson-1938}, he derived 
\begin{align}\label{WatsonIntegral-1938}
	\frac{L_m^{(\alpha)}(x)}{\Gamma(\alpha+m+1)}
	\frac{L_n^{(\alpha)}(x)}{\Gamma(\alpha+n+1)}
	&=\frac{2^{m+n+1}}{\pi \Gamma(\alpha+\frac{1}{2})\Gamma(\frac{1}{2})}
	\int_0^{\frac{\pi}{2}}
	\int_0^{\pi}\sin^{2\alpha}\theta\cos^{m+n}\phi\cos(m-n)\phi\notag\\
	&\hspace{2cm}\cdot\frac{L_{m+n}^{(\alpha)}(x+x\sec\phi\cos\theta)}{\Gamma(\alpha+m+n+1)}\mathrm{d}\theta\mathrm{d}\phi,
\end{align} 
which includes Bailey's formula for the Hermite polynomials. Later, Carlitz \cite{Carlitz-1962} obtained a slightly different formula, namely the special case of \eqref{ProductFormula-MLaguerre} with $k=1$. Carlitz's technique is quite concise and effective, and has therefore found wide applications in Bessel polynomials \cite{Chatterjea-1963}, Jacobi polynomials \cite{Chatterjea-1964}, Lauricella polynomials \cite{Liu-Lin-Lu-Srivastava-2013a}, Rice polynomials \cite{Manocha-1969} etc.

Since there is no direct proof of \eqref{ProductFormula-MLaguerre} available in the literature, we provide here a direct proof of a more general version of the result in terms of contour integrals. 

We need two auxiliary integral formulas. Let $\mathbb{T}$ denote the positively oriented unit circle. First, recall that
\begin{equation}\label{AuxiliaryIF-1}
	\frac{(m+n)!}{m!\, n!}=\binom{m+n}{m}
	=\frac{1}{2\pi\mathrm{i}}\int_{\mathbb{T}}(1+z)^{m+n}z^{-m}
	\frac{\mathrm{d}z}{z},
\end{equation}
where $m,n\in\mathbb{Z}_{\geq0}$. This is an easy identity which can be proved by expanding $(1+z)^{m+n}$ via the binomial theorem and then evaluating the integral of the form $\int_{\mathbb{T}}z^{k}\mathrm{d}z$ explicitly (see, for example, \cite[p. 128, Lemma 1]{Chakraborty-Roy-2024}). Next, we require the integral \cite[p. 93, Theorem 1]{Stade-1994}: 
\begin{equation}\label{AuxiliaryIF-2}
	\frac{\Gamma(x+y+1)}{\Gamma(x+1)\Gamma(y+1)}=\frac{1}{2\pi\mathrm{i}}\int_{\mathbb{T}}\left(1+\frac{1}{u}\right)^{x}(1+u)^{y}\frac{\mathrm{d}u}{u},
\end{equation}
where $\Re(x+y)>-1$.

\begin{theorem}\label{ProductF-Th}
	Let $\Re(\alpha+\beta)>-1$. Then we have
	\begin{align}\label{ProductF-Th-1}
		\frac{L_{\mathbf{m}}^{(\alpha)}(\mathbf{x})}{\Gamma(\alpha+1+\langle\mathbf{m}\rangle)} \frac{L_{\mathbf{n}}^{(\beta)}(\mathbf{y})}{\Gamma(\beta+1+\langle\mathbf{n}\rangle)}
		&=
		\frac{1}{(2\pi\mathrm{i})^{k+1}}\int_{\mathbb{T}^{k+1}}
		\omega_{\alpha,\beta}(u)
		\prod_{r=1}^{k}\omega_{m_r,n_r}(z_r)\notag\\
		&\hspace{1.5cm}\cdot
		\frac{L_{\mathbf{m}+\mathbf{n}}^{(\alpha+\beta)}(\widetilde{\Xi}(\mathbf{x},\mathbf{y};u, \mathbf{z}))}{\Gamma(\alpha+\beta+\langle\mathbf{m}+\mathbf{n}\rangle+1)}\mathrm{d}u\mathrm{d}\mathbf{z},
	\end{align}
	where $\widetilde{\Xi}(\mathbf{x},\mathbf{y};u, \mathbf{z})=(\widetilde{\xi}_1+\widetilde{\eta}_1,\cdots,\widetilde{\xi}_k+\widetilde{\eta}_k)$ with
	\begin{align}
		\widetilde{\xi}_r \equiv \widetilde{\xi}_r(u,z_r,x_r)&:=\frac{1+u}{1+z_r}\cdot\frac{z_r}{u}\cdot x_r,\label{ProductF-Th-2}\\
		\widetilde{\eta}_r \equiv 
		\widetilde{\eta}(u,z_r,y_r)&:=\frac{1+u}{1+z_r}\cdot y_r,  \label{ProductF-Th-3}
	\end{align}
	and
	\begin{equation}\label{ProductF-Th-4}
		\omega_{A,B}(u):=\left(1+\frac{1}{u}\right)^{A}(1+u)^{B}\frac{1}{u}.
	\end{equation}
\end{theorem}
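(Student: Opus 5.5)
The natural route is to expand the right-hand side of \eqref{ProductF-Th-1} and integrate term by term, so that the two auxiliary integrals \eqref{AuxiliaryIF-1} and \eqref{AuxiliaryIF-2} collapse it to the product on the left. The starting point is the explicit form \eqref{Def-MLaguerre}, written as
\[
\frac{L_{\mathbf{N}}^{(\lambda)}(\mathbf{w})}{\Gamma(\lambda+1+\langle\mathbf{N}\rangle)}
=\sum_{\mathbf{j}=\mathbf{0}}^{\mathbf{N}}\frac{(-1)^{\langle\mathbf{j}\rangle}\,\mathbf{w}^{\mathbf{j}}}{(\mathbf{N}-\mathbf{j})!\,\mathbf{j}!\,\Gamma(\lambda+1+\langle\mathbf{j}\rangle)},
\]
which uses $(-N)_j/j!=(-1)^j\binom{N}{j}$.

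\textbf{Step 1: expand the left-hand side.} Applying the formula above to both factors gives a double finite sum over $\mathbf{i}\le\mathbf{m}$ and $\mathbf{j}\le\mathbf{n}$. The summand is
\[
\frac{(-1)^{\langle\mathbf{i}+\mathbf{j}\rangle}\,\mathbf{x}^{\mathbf{i}}\mathbf{y}^{\mathbf{j}}}{(\mathbf{m}-\mathbf{i})!\,\mathbf{i}!\,(\mathbf{n}-\mathbf{j})!\,\mathbf{j}!\,\Gamma(\alpha+1+\langle\mathbf{i}\rangle)\,\Gamma(\beta+1+\langle\mathbf{j}\rangle)}.
\]

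\textbf{Step 2: expand the integrand on the right.} Apply the same formula to $L_{\mathbf{m}+\mathbf{n}}^{(\alpha+\beta)}(\widetilde{\Xi})$, with summation index $\mathbf{p}\le\mathbf{m}+\mathbf{n}$. Each factor $(\widetilde\xi_r+\widetilde\eta_r)^{p_r}$ is expanded binomially into $\binom{p_r}{i_r}\widetilde\xi_r^{\,i_r}\widetilde\eta_r^{\,j_r}$ with $i_r+j_r=p_r$. Then $\binom{p_r}{i_r}/p_r!=1/(i_r!\,j_r!)$, and the monomial becomes
\[
(1+u)^{\langle\mathbf{i}+\mathbf{j}\rangle}u^{-\langle\mathbf{i}\rangle}\prod_r z_r^{i_r}(1+z_r)^{-i_r-j_r}\,x_r^{i_r}y_r^{j_r}.
\]
Since all sums are finite and the integrand is continuous on $\mathbb{T}^{k+1}$ (away from the integrable points $u=-1$ and $z_r=-1$), interchanging sum and integral is harmless. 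The integral then factors into a $u$-integral and $k$ separate $z_r$-integrals.

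\textbf{Step 3: the $u$-integral.} The $u$-integral is
\[
\frac{1}{2\pi\mathrm{i}}\int_{\mathbb T}\Bigl(1+\tfrac1u\Bigr)^{\alpha+\langle\mathbf{i}\rangle}(1+u)^{\beta+\langle\mathbf{j}\rangle}\frac{\mathrm{d}u}{u},
\]
using $(1+u)^{a}u^{-a}=(1+1/u)^{a}$. By \eqref{AuxiliaryIF-2} this equals
\[
\frac{\Gamma(\alpha+\beta+1+\langle\mathbf{i}+\mathbf{j}\rangle)}{\Gamma(\alpha+1+\langle\mathbf{i}\rangle)\,\Gamma(\beta+1+\langle\mathbf{j}\rangle)}.
\]
This needs $\Re(\alpha+\beta)>-1$, which is exactly the hypothesis. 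The numerator cancels the factor $1/\Gamma(\alpha+\beta+1+\langle\mathbf{p}\rangle)$ coming from the expansion, which leaves precisely the two gamma denominators appearing in Step 1.

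\textbf{Step 4: the $z_r$-integrals.} Here the integrand is $\omega_{m_r,n_r}(z_r)\,z_r^{i_r}(1+z_r)^{-i_r-j_r}$. Writing $(1+1/z)^{m}=(1+z)^m z^{-m}$, it equals $(1+z_r)^{m_r+n_r-i_r-j_r}z_r^{-(m_r-i_r)}\,\mathrm{d}z_r/z_r$, a single-valued Laurent integrand. Two cases arise.
\begin{itemize}
\item If $i_r\le m_r$ and $j_r\le n_r$, then by \eqref{AuxiliaryIF-1} the integral is $\binom{m_r+n_r-i_r-j_r}{m_r-i_r}$.
\item Otherwise it vanishes, which gives the correct truncation of the index ranges. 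If $i_r>m_r$, the integrand is $z^{i_r-m_r-1}$ (a nonnegative power) times a function holomorphic in the disc, so the integral is $0$. If $j_r>n_r$ with $i_r\le m_r$, the exponent $m_r+n_r-i_r-j_r$ is smaller than $m_r-i_r$, so the polynomial has no $z^{m_r-i_r}$ coefficient and the integral is again $0$; this also covers a negative exponent, handled by the geometric expansion on $|z|<1$ with a harmless boundary point.
\end{itemize}

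\textbf{Step 5: collect terms.} Multiply the surviving binomial by the factor $1/(\mathbf{m}+\mathbf{n}-\mathbf{p})!$ from the expansion. This gives
\[
\frac{1}{(\mathbf{m}-\mathbf{i})!\,(\mathbf{n}-\mathbf{j})!},
\]
and together with $1/(\mathbf{i}!\,\mathbf{j}!)$ and $(-1)^{\langle\mathbf{p}\rangle}$ this reproduces the left-hand side term by term.

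\textbf{Main obstacle.} The delicate point is the branch and convergence bookkeeping in Step 3 and in the boundary case of Step 4. One has to confirm that \eqref{AuxiliaryIF-2} applies with the exponents shifted by the nonnegative integers $\langle\mathbf{i}\rangle$ and $\langle\mathbf{j}\rangle$; the shifted integrand stays integrable near $u=-1$ because $\Re(\alpha+\beta)+\langle\mathbf{i}+\mathbf{j}\rangle>-1$. One also has to confirm that the principal branches of $(1+1/u)^{\alpha}$ and $(1+u)^{\beta}$ are the same ones used in \eqref{ProductF-Th-4}.
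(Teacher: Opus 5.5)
Your argument is correct and is essentially the paper's proof run in the opposite direction. The paper expands the product on the left, replaces the gamma ratio and the binomial coefficients by the contour integrals \eqref{AuxiliaryIF-2} and \eqref{AuxiliaryIF-1}, and then resums $\widetilde{\xi}_r^{\,i_r}\widetilde{\eta}_r^{\,j_r}$ into $(\widetilde{\xi}_r+\widetilde{\eta}_r)^{i_r}$. You instead expand the integrand and integrate term by term, which has the merit of making the truncation to $\mathbf{i}\le\mathbf{m}$, $\mathbf{j}\le\mathbf{n}$ explicit, where the paper only ``temporarily retains'' infinite upper limits. One small remark: the negative-exponent case in your Step 4 never occurs, since $i_r+j_r=p_r\le m_r+n_r$. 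So the $z_r$-integrands are Laurent polynomials with no singularity at $z_r=-1$, which is fortunate, because a pole on $\mathbb{T}$ would not be harmless.
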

\begin{proof}
	From \eqref{Def-MLaguerre} and \eqref{ConfluentLauricella}, we obtain
	\begin{align}\label{ProductFormula-MLaguerre-Proof-1}
		&L_{\mathbf{m}}^{(\alpha)}(\mathbf{x}) L_{\mathbf{n}}^{(\beta)}(\mathbf{y})
		\notag\\
		&=\frac{(\alpha+1)_{\langle\mathbf{m}\rangle}}{\mathbf{m}!} 
		\frac{(\beta+1)_{\langle\mathbf{n}\rangle}}{\mathbf{n}!}
		\sum_{\mathbf{i}=\mathbf{0}}^{\infty} 
		\frac{(-m_1)_{i_1}\cdots (-m_k)_{i_k}}{(\alpha+1)_{\langle\mathbf{i}\rangle}}
		\frac{\mathbf{x}^{\mathbf{i}}}{\mathbf{i}!} 
		\sum_{\mathbf{j}=\mathbf{0}}^{\infty} 
		\frac{(-n_1)_{j_1}\cdots (-n_k)_{j_k}}{(\beta+1)_{\langle\mathbf{j}\rangle}}
		\frac{\mathbf{y}^{\mathbf{j}}}{\mathbf{j}!}\notag\\
		&=\frac{(\alpha+1)_{\langle\mathbf{m}\rangle}(\beta+1)_{\langle\mathbf{n}\rangle}}{(\mathbf{m}+\mathbf{n})!}  
		\sum_{\mathbf{i}=\mathbf{0}}^{\infty} 
		\sum_{\mathbf{j}=\mathbf{0}}^{\infty} \frac{\Gamma(\alpha+1)\Gamma(\beta+1)}{\Gamma(\alpha+1+\langle\mathbf{i}\rangle)\Gamma(\beta+1+\langle\mathbf{j}\rangle)}\frac{\mathbf{x}^{\mathbf{i}}}{\mathbf{i}!}\frac{\mathbf{y}^{\mathbf{j}}}{\mathbf{j}!}\frac{\Gamma(\alpha+\beta+1+\langle\mathbf{i}\rangle+\langle\mathbf{j}\rangle)}{(\alpha+\beta+1)_{\langle\mathbf{i}\rangle+\langle\mathbf{j}\rangle}\Gamma(\alpha+\beta+1)}\notag\\
		&\hspace{1cm}\cdot\prod_{r=1}^{k}\frac{(-m_r-n_r)_{i_r+j_r}\Gamma(m_r+n_r-i_r-j_r+1)}{\Gamma(m_r-i_r+1)\Gamma(n_r-j_r+1)}
		\notag\\
		&=\frac{(\alpha+1)_{\langle\mathbf{m}\rangle}(\beta+1)_{\langle\mathbf{n}\rangle}}{(\mathbf{m}+\mathbf{n})!} 
		\frac{\Gamma(\alpha+1)\Gamma(\beta+1)}{\Gamma(\alpha+\beta+1)}
		\sum_{\mathbf{i}=\mathbf{0}}^{\infty} 
		\sum_{\mathbf{j}=\mathbf{0}}^{\infty}
		\frac{(-m_1-n_1)_{i_1+j_1}\cdots (-m_k-n_k)_{i_k+j_k}}{(\alpha+\beta+1)_{\langle\mathbf{i}\rangle+\langle\mathbf{j}\rangle}} \frac{\mathbf{x}^{\mathbf{i}}}{\mathbf{i}!}\frac{\mathbf{y}^{\mathbf{j}}}{\mathbf{j}!}\notag\\
		&\hspace{1cm}\cdot\frac{\Gamma(\alpha+\beta+1+\langle\mathbf{i}\rangle+\langle\mathbf{j}\rangle)}{\Gamma(\alpha+1+\langle\mathbf{i}\rangle)\Gamma(\beta+1+\langle\mathbf{j}\rangle)}
		\prod_{r=1}^{k}
		\binom{m_{r}+n_{r}-i_{r}-j_{r}}{m_r-i_r}.
	\end{align}
	Here, for the sake of technical convenience in the subsequent analysis, we temporarily retain the upper limit of the summation as $\infty$.
	
	By making use of \eqref{AuxiliaryIF-1} and \eqref{AuxiliaryIF-2}, we have
	\begin{align}\label{ProductFormula-MLaguerre-Proof-2}
		&\frac{\Gamma(\alpha+\beta+1+\langle\mathbf{i}\rangle+\langle\mathbf{j}\rangle)}{\Gamma(\alpha+1+\langle\mathbf{i}\rangle)\Gamma(\beta+1+\langle\mathbf{j}\rangle)}
		\prod_{r=1}^{k}\frac{\Gamma(m_{r}+n_{r}-i_{r}-j_{r}+1)}{\Gamma(m_r-i_r+1)\Gamma(n_r-j_r+1)}\notag\\
		&\hspace{0.5cm}=\frac{1}{(2\pi\mathrm{i})^{k+1}}\int_{\mathbb{T}^{k+1}}
		\omega_{\alpha,\beta}(u)
		\prod_{r=1}^{k}\omega_{m_r,n_r}(z_r)
		\notag\\
		&\hspace{3.5cm}\cdot\prod_{r=1}^{k}\left(\frac{(1+u)z_r}{(1+z_r)u}\right)^{i_r}\left(\frac{1+u}{1+z_r}\right)^{j_r}
		\mathrm{d}u
		\mathrm{d}\mathbf{z},
	\end{align}
	where $\Re(\alpha+\beta)>-1$ and $\omega_{A,B}(z)$ is defined by \eqref{ProductF-Th-4}. Substituting \eqref{ProductFormula-MLaguerre-Proof-2} into \eqref{ProductFormula-MLaguerre-Proof-1} gives 
	\begin{align*}
		&\frac{L_{\mathbf{m}}^{(\alpha)}(\mathbf{x})}{\Gamma(\alpha+1+\langle\mathbf{m}\rangle)} \frac{L_{\mathbf{n}}^{(\beta)}(\mathbf{y})}{\Gamma(\beta+1+\langle\mathbf{n}\rangle)}\\
		&\hspace{0.5cm}=\frac{1}{(\mathbf{m}+\mathbf{n})!}
		\frac{1}{(2\pi\mathrm{i})^{k+1}\Gamma(\alpha+\beta+1)}\int_{\mathbb{T}^{k+1}}
		\omega_{\alpha,\beta}(u)
		\prod_{r=1}^{k}\omega_{m_r,n_r}(z_r)\\
		&\hspace{1.5cm}\cdot\sum_{\mathbf{i}=\mathbf{0}}^{\infty} 
		\sum_{\mathbf{j}=\mathbf{0}}^{\infty} 
		\frac{(-m_1-n_1)_{i_1+j_1}\cdots (-m_k-n_k)_{i_k+j_k}}{(\alpha+\beta+1)_{\langle\mathbf{i}\rangle+\langle\mathbf{j}\rangle}\,\mathbf{i}!\,\mathbf{j}!} 
		\prod_{r=1}^{k}
		\widetilde{\xi}_r^{i_r}\widetilde{\eta}_r^{j_r}\mathrm{d}u\mathrm{d}\mathbf{z},
	\end{align*}
	where $\widetilde{\xi}_r$ and $\widetilde{\eta}_r$ are defined by \eqref{ProductF-Th-2} and \eqref{ProductF-Th-4} respectively.

	Finally, in view of the identity \cite[p. 4546, Eq. (5)]{Liu-Lin-Lu-Srivastava-2013b}:
	\[
	\sum_{\mathbf{i},\mathbf{j}=\mathbf{0}}^{\infty}\Omega(\mathbf{i}+\mathbf{j})\frac{\mathbf{x}^{\mathbf{i}}}{\mathbf{i}!}\frac{\mathbf{y}^{\mathbf{j}}}{\mathbf{j}!}
	=\sum_{\mathbf{i}=\mathbf{0}}^{\infty}\Omega(\mathbf{i})\frac{(\mathbf{x}+\mathbf{y})^{\mathbf{i}}}{\mathbf{i}!},
	\]
	where $\Omega(\mathbf{i})$ is a suitable $k$-dimensional sequence, we obtain 
	\begin{align*}
		&\frac{L_{\mathbf{m}}^{(\alpha)}(\mathbf{x})}{\Gamma(\alpha+1+\langle\mathbf{m}\rangle)} \frac{L_{\mathbf{n}}^{(\beta)}(\mathbf{y})}{\Gamma(\beta+1+\langle\mathbf{n}\rangle)}\\
		&\hspace{0.5cm}=\frac{1}{(\mathbf{m}+\mathbf{n})!}
		\frac{1}{(2\pi\mathrm{i})^{k+1}\Gamma(\alpha+\beta+1)}\int_{\mathbb{T}^{k+1}}
		\omega_{\alpha,\beta}(u)
		\prod_{r=1}^{k}\omega_{m_r,n_r}(z_r)\\
		&\hspace{1.5cm}\cdot\sum_{\mathbf{i}=\mathbf{0}}^{\infty} 
		\frac{(-m_1-n_1)_{i_1}\cdots (-m_k-n_k)_{i_k}}{(\alpha+\beta+1)_{\langle\mathbf{i}\rangle}\,\mathbf{i}!} 
		\prod_{r=1}^{k}
		(\widetilde{\xi}_r+\widetilde{\eta}_r)^{i_r}\mathrm{d}u\mathrm{d}\mathbf{z}\\
		&\hspace{0.5cm}=
		\frac{1}{(2\pi\mathrm{i})^{k+1}}\int_{\mathbb{T}^{k+1}}
		\omega_{\alpha,\beta}(u)
		\prod_{r=1}^{k}\omega_{m_r,n_r}(z_r)\\
		&\hspace{1.5cm}\cdot
		\frac{L_{\mathbf{m}+\mathbf{n}}^{(\alpha+\beta)}(\widetilde{\xi}_1+\widetilde{\eta}_1,\cdots,\widetilde{\xi}_k+\widetilde{\eta}_k)}{\Gamma(\alpha+\beta+\langle\mathbf{m}+\mathbf{n}\rangle+1)}\mathrm{d}u\mathrm{d}\mathbf{z}.
	\end{align*}
	This completes the proof of the product formula \eqref{ProductF-Th-1}.
\end{proof}
\begin{remark}
	Letting $z_r=\mathrm{e}^{2\mathrm{i}\varphi_r}$ $(r=1,\cdots,k)$ and  $u=\mathrm{e}^{2\mathrm{i}\theta}$ in Theorem \ref{ProductF-Th}, we obtain 
	\begin{align}
		\widetilde{\xi}_r(\mathrm{e}^{2\mathrm{i}\theta},\mathrm{e}^{2\mathrm{i}\varphi_r},x_r)&=x_r\cdot \mathrm{e}^{\mathrm{i}(\varphi_r-\theta)}\cos\theta\sec\varphi_r,
		\label{ProductF-Th-Remark-1}\\
		\widetilde{\eta}_r(\mathrm{e}^{2\mathrm{i}\theta},\mathrm{e}^{2\mathrm{i}\varphi_r},y_r)&=y_r\cdot 
		\mathrm{e}^{\mathrm{i}(\theta-\varphi_r)}\cos\theta\sec\varphi_r,
		\label{ProductF-Th-Remark-2}
	\end{align}
	and 
	\begin{align}
		&\frac{1}{(2\pi\mathrm{i})^{k+1}}\omega_{\alpha,\beta}(u)
		\prod_{r=1}^{k}\omega_{m_r,n_r}(z_r)\mathrm{d}u\mathrm{d}\mathbf{z}\\
		&\hspace{0.5cm}=\frac{2^{\alpha+\beta+\langle\mathbf{m}+\mathbf{n}\rangle}}{\pi^{k+1}}
		\mathrm{e}^{\mathrm{i}(\beta-\alpha)\theta+\mathrm{i}\langle(\mathbf{n}-\mathbf{m})\circ\bm{\varphi}\rangle}
		\cos^{\alpha+\beta}\theta
		\cos^{m_1+n_1}\varphi_1\cdots \cos^{m_k+n_k}\varphi_k\mathrm{d}\theta\mathrm{d}\bm{\varphi}. 
		\label{ProductF-Th-Remark-3}
	\end{align}
	Substituting \eqref{ProductF-Th-Remark-1}, \eqref{ProductF-Th-Remark-2} and \eqref{ProductF-Th-Remark-3} into \eqref{ProductF-Th-1}, and letting $\mathbf{x}\leftrightarrow\mathbf{y}$, $\mathbf{m}\leftrightarrow\mathbf{n}$ and $\alpha\leftrightarrow\beta$ in the resulting equation, we reproduce the product formula \eqref{ProductFormula-MLaguerre}. 
\end{remark}

At the end of this section, we would like to briefly discuss the possibility of extending Watson's product formula \eqref{WatsonIntegral-1939} to the multivariate case. Watson's original proof \cite{Watson-1938} relies on the classical Hardy-Hille formula \cite[p. 461, Eq. (18.18.27)]{NIST Handbook}
	\begin{equation}\label{HardyHilleF}
		\sum_{n=0}^{\infty}
		\frac{n!L_{n}^{(\alpha)}(x)L_{n}^{(\alpha)}(y)}{\Gamma(\alpha+1+n)}u^n
		=\frac{(xyu)^{-\frac{\alpha}{2}}}{1-u}
		\exp\left(-\frac{u(x+y)}{1-u}\right)
		I_{\alpha}\left(\frac{2\sqrt{xyu}}{1-u}\right).
	\end{equation}
	By comparing \eqref{GF-Erdelyi} with \eqref{HardyHilleF}, we realize that Watson's method \cite{Watson-1939} works only when $x_1=\cdots=x_k=x$ and $y_1=\cdots=y_k=y$. In that case, however, the multivariate Laguerre polynomials degenerate into the univariate Laguerre polynomials. To see this, we return to the generating function \eqref{GF-MLaguerre} and find that
	\begin{align*}
		\sum_{\mathbf{n}=\mathbf{0}}^{\infty}L_{\mathbf{n}}^{(\alpha)}(x,\cdots,x)\mathbf{u}^{\mathbf{n}}
		&=(1-\langle\mathbf{u}\rangle)^{-\alpha-1}\exp\left(-\frac{x\langle\mathbf{u}\rangle}{1-\langle\mathbf{u}\rangle}\right)\\
		&=\sum_{m=0}^{\infty}L_{m}^{(\alpha)}(x)\langle\mathbf{u}\rangle^{m}
		=\sum_{\mathbf{n}=\mathbf{0}}^{\infty}\frac{\langle\mathbf{n}\rangle!}{\mathbf{n}!}L_{\langle\mathbf{n}\rangle}^{(\alpha)}(x)\mathbf{u}^{\mathbf{n}}.
	\end{align*}
	Thus, we have
	\begin{equation}\label{ReductionF}
		L_{\mathbf{n}}^{(\alpha)}(x,\cdots,x)=\frac{\langle\mathbf{n}\rangle!}{\mathbf{n}!}L_{\langle\mathbf{n}\rangle}^{(\alpha)}(x).
	\end{equation}
	So we may not expect Watson's formula to have a nontrivial multivariable generalization.

\section{Oshima's fractional calculus and the Srivastava-Niukkanen formula}\label{Sec-OshimaFC}

Let $E_k$ be the standard simplex in $\mathbb{R}^k$. The Dirichlet measure $\mathrm{d}\mu_{(\bm{\lambda},\alpha)}(\mathbf{s})$, introduced by Carlson \cite[p. 64, Definition 4.4-1]{Carlson-Book-1977} is defined on $E_k$ by
\begin{equation}\label{DirichletMeasure}
	\mathrm{d}\mu_{(\bm{\lambda},\alpha)}(\mathbf{s})
	=\frac{\Gamma(\langle\bm{\lambda}\rangle+\alpha)}{\Gamma(\lambda_1)\cdots\Gamma(\lambda_k)\Gamma(\alpha)}\mathbf{s}^{\bm{\lambda}-1}(1-\langle\mathbf{s}\rangle)^{\alpha-1}\mathrm{d}\mathbf{s},
\end{equation}
where $\lambda_j>0$ $(j=1,\cdots,k)$ and $\alpha>0$.  Srivastava and Niukkanen \cite[p. 249, Eq. (22)]{Srivastava-Niukkanen} established the following multiple integral representation for the multivariate Laguerre polynomials: 
\begin{align}\label{Srivastava-Niukkanen Integral}
	L_{\mathbf{n}}^{(\langle\bm{\lambda}\rangle+\alpha+k)}(\mathbf{x})
	=\frac{(\langle\bm{\lambda}\rangle+\alpha+k+1)_{\langle\mathbf{n}\rangle}}{(\lambda_1+1)_{n_1}\cdots (\lambda_k+1)_{n_k}} 
	\int_{E_k} 
	\prod_{j=1}^{k} 
	L_{n_j}^{(\lambda_j)}(x_j s_j)\mathrm{d}\mu_{(\bm{\lambda}+\mathbf{1},\alpha+1)}(\mathbf{s}),
\end{align}
where $\lambda_j>-1$ $(j=1,\cdots,k)$ and $\alpha>-1$. This formula plays an important role in deriving upper bounds for multivariate Laguerre polynomials \cite{Luo-Raina-2026}. Here we revisit the formula \eqref{Srivastava-Niukkanen Integral} from the viewpoint of Oshima's fractional calculus \cite{Oshima-2024}.

As usual, the Riemann-Liouville fractional integral operator $I_{0+}^{\alpha}$ is defined by
\[
I_{0+}^{\alpha}f(x):=\frac{1}{\Gamma(\alpha)}\int_0^x (x-t)^{\alpha-1}f(t)\mathrm{d}t. 
\]
If we perform the change of variable $t=xs$, then we can write 
\[
I_{0+}^{\alpha}f(x)=x^{\alpha} K_x^\alpha f(x),
\] 
where the integral transformation $K_x^\mu$ is defined by
\[
K_x^\alpha f(x):=\frac{1}{\Gamma(\alpha)}\int_0^1 (1-s)^{\alpha-1}f(xs)\mathrm{d}s.
\]

In 2025, Oshima \cite[p. 554]{Oshima-2024} extended the definition of $K_x^{\alpha}$ to a multivariate function $f(\mathbf{x})$ as follows: 
\[
K_{\mathbf{x}}^{\alpha}f(\mathbf{x}):=\frac{1}{\Gamma(\alpha)}
\int_{E_k}(1-\langle\mathbf{s}\rangle)^{\alpha-1}f(\mathbf{x}\circ\mathbf{s})\mathrm{d}\mathbf{s}.
\]
A simple calculation gives 
\[
K_{\mathbf{x}}^{\alpha}\mathbf{x}^{\bm{\lambda}}=\frac{\Gamma(\lambda_1)\cdots\Gamma(\lambda_k)}{\Gamma(\lambda_1+\cdots+\lambda_k+\alpha)}\mathbf{x}^{\bm{\lambda}}.
\]
Then Oshima defined the transformation \cite[p. 557]{Oshima-2024}
\begin{equation}\label{OshimaOperator}
	K_{\mathbf{x}}^{\alpha,\bm{\lambda}}:=\mathbf{x}^{\mathbf{1}-\bm{\lambda}}K_{\mathbf{x}}^{\alpha}\mathbf{x}^{\bm{\lambda}-\mathbf{1}}
\end{equation}
on the ring $\mathcal{O}_0$ of convergent power series of $\mathbf{x}$, and showed that if 
\[
u(\mathbf{x})=\sum_{\mathbf{m}=\mathbf{0}}^{\infty}c_{\mathbf{m}}\mathbf{x}^{\mathbf{m}}\in\mathcal{O}_0,
\]
then 
\[
K_{\mathbf{x}}^{\alpha,\bm{\lambda}}u(\mathbf{x})
=\frac{\Gamma(\lambda_1)\cdots\Gamma(\lambda_k)}{\Gamma(\langle\bm{\lambda}\rangle+\alpha)}\sum_{\mathbf{m}=\mathbf{0}}^{\infty}\frac{(\lambda_1)_{m_1}\cdots(\lambda_k)_{m_k}}{(\langle\bm{\lambda}\rangle+\alpha)_{\langle\mathbf{m}\rangle}}c_{\mathbf{m}}\mathbf{x}^{\mathbf{m}}.
\]

Next, let us interpret Oshima's operator \eqref{OshimaOperator} from a different angle. Note that the operator \eqref{OshimaOperator} can be explicitly written as 
\begin{align*}
	K_{\mathbf{x}}^{\alpha,\bm{\lambda}} f(\mathbf{x})
	&=\frac{1}{\Gamma(\alpha)}\mathbf{x}^{1-\bm{\lambda}}
	\int_{E_k}
	(\mathbf{1}-\langle\mathbf{s}\rangle)^{\alpha-1}
	(\mathbf{x}\circ\mathbf{s})^{\bm{\lambda}-1}
	f(\mathbf{x}\circ\mathbf{s})\mathrm{d}\mathbf{s}\\
	&=\frac{1}{\Gamma(\alpha)}
	\int_{E_k}
	\mathbf{s}^{\bm{\lambda}-1}
	(\mathbf{1}-\langle\mathbf{s}\rangle)^{\alpha-1}
	f(\mathbf{x}\circ\mathbf{s})\mathrm{d}\mathbf{s}.
\end{align*}
Thus, with the help of \eqref{DirichletMeasure}, we have equivalently 
\begin{equation}\label{OshimaOperator-1}
	K_{\mathbf{x}}^{\alpha,\bm{\lambda}} f(\mathbf{x})
	=\frac{\Gamma(\lambda_1)\cdots\Gamma(\lambda_k)}{\Gamma(\langle\bm{\lambda}\rangle+\alpha)}
	\int_{E_k}f(\mathbf{x}\circ\mathbf{s})\mathrm{d}\mu_{(\bm{\lambda},\alpha)}(\mathbf{s}).
\end{equation}

Letting $\bm{\lambda}\rightarrow\bm{\lambda}+\mathbf{1}$ and $\alpha\rightarrow\alpha+1$ in \eqref{OshimaOperator-1} gives 
\begin{equation}\label{OshimaOperator-2}
	K_{\mathbf{x}}^{\alpha+1,\bm{\lambda}+\mathbf{1}} f(\mathbf{x})
	=\frac{\Gamma(\lambda_1+1)\cdots\Gamma(\lambda_k+1)}{\Gamma(\langle\bm{\lambda}+\mathbf{1}\rangle+\alpha+1)}
	\int_{E_k}f(\mathbf{x}\circ\mathbf{s})\mathrm{d}\mu_{(\bm{\lambda}+\mathbf{1},\alpha+1)}(\mathbf{s}).
\end{equation}
By setting 
\[
f(\mathbf{x}\circ\mathbf{s})=\prod_{j=1}^{k} 
L_{n_j}^{(\lambda_j)}(x_j s_j)
\]
in \eqref{OshimaOperator-2} and then using \eqref{Srivastava-Niukkanen Integral}, we obtain
\begin{align*}
	K_{\mathbf{x}}^{\alpha+1,\bm{\lambda}+\mathbf{1}} \prod_{j=1}^{k} 
	L_{n_j}^{(\lambda_j)}(x_j)
	&=\frac{\Gamma(\lambda_1+1)\cdots\Gamma(\lambda_k+1)}{\Gamma(\langle\bm{\lambda}\rangle+\alpha+k+1)}
	\int_{E_k}\prod_{j=1}^{k} 
	L_{n_j}^{(\lambda_j)}(x_j s_j)\mathrm{d}\mu_{(\bm{\lambda}+\mathbf{1},\alpha+1)}(\mathbf{s})\\
	&=\frac{\Gamma(\lambda_1+1)\cdots\Gamma(\lambda_k+1)}{\Gamma(\langle\bm{\lambda}\rangle+\alpha+k+1)}
	\frac{(\lambda_1+1)_{n_1}\cdots (\lambda_k+1)_{n_k}}{(\langle\bm{\lambda}\rangle+\alpha+k+1)_{\langle\mathbf{n}\rangle}}
	L_{\mathbf{n}}^{(\langle\bm{\lambda}\rangle+\alpha+k)}(\mathbf{x}).
\end{align*}
A slight rearrangement of the above expression yields the following proposition.

\begin{proposition}\label{Prop-FIR-MLaguerre}
	Let $\lambda_j>-1$ $(j=1,\cdots,k)$ and $\alpha>-1$. We have
	\[
	L_{\mathbf{n}}^{(\langle\bm{\lambda}\rangle+\alpha+k)}(\mathbf{x})
	=
	\frac{\Gamma(\langle\bm{\lambda}+\mathbf{n}\rangle+\alpha+k+1)}{\Gamma(\lambda_1+1+n_1)\cdots\Gamma(\lambda_k+1+n_k)}
	K_{\mathbf{x}}^{\alpha+1,\bm{\lambda}+\mathbf{1}} \prod_{j=1}^{k} 
	L_{n_j}^{(\lambda_j)}(x_j).
	\]
\end{proposition}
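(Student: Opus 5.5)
The identity is essentially a rearrangement of the computation just before the statement, so the plan has two parts. First, use that chain of equalities. Second, back it up with an independent coefficient-wise check that does not rely on the Srivastava--Niukkanen formula \eqref{Srivastava-Niukkanen Integral}.

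\textbf{Step 1 (direct route).} Apply \eqref{OshimaOperator-2} to $f(\mathbf{x})=\prod_{j=1}^k L_{n_j}^{(\lambda_j)}(x_j)$. The hypotheses $\lambda_j>-1$ and $\alpha>-1$ make the Dirichlet measure $\mathrm{d}\mu_{(\bm{\lambda}+\mathbf{1},\alpha+1)}$ well defined, since all its parameters are positive. Then substitute \eqref{Srivastava-Niukkanen Integral} for the integral. Use $\langle\bm{\lambda}+\mathbf{1}\rangle+\alpha+1=\langle\bm{\lambda}\rangle+\alpha+k+1$, together with
\[
\Gamma(\lambda_j+1)(\lambda_j+1)_{n_j}=\Gamma(\lambda_j+1+n_j),
\]
\[
\Gamma(\langle\bm{\lambda}\rangle+\alpha+k+1)(\langle\bm{\lambda}\rangle+\alpha+k+1)_{\langle\mathbf{n}\rangle}=\Gamma(\langle\bm{\lambda}+\mathbf{n}\rangle+\alpha+k+1).
\]
This gives
\[
K_{\mathbf{x}}^{\alpha+1,\bm{\lambda}+\mathbf{1}}\prod_j L_{n_j}^{(\lambda_j)}(x_j)=\frac{\prod_j\Gamma(\lambda_j+1+n_j)}{\Gamma(\langle\bm{\lambda}+\mathbf{n}\rangle+\alpha+k+1)}\,L_{\mathbf{n}}^{(\langle\bm{\lambda}\rangle+\alpha+k)}(\mathbf{x}).
\]
Solving for $L_{\mathbf{n}}$ yields the proposition.

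\textbf{Step 2 (independent check via Oshima's series action).} Write $\prod_j L_{n_j}^{(\lambda_j)}(x_j)=\sum_{\mathbf{m}\le\mathbf{n}} c_{\mathbf{m}}\mathbf{x}^{\mathbf{m}}$, where
\[
c_{\mathbf{m}}=\prod_j\frac{(\lambda_j+1)_{n_j}(-n_j)_{m_j}}{n_j!\,(\lambda_j+1)_{m_j}\,m_j!},
\]
which follows from \eqref{Def-LaguerreP}. Apply Oshima's formula for $K_{\mathbf{x}}^{\alpha,\bm{\lambda}}$ on $\mathcal{O}_0$, with $\bm{\lambda}\to\bm{\lambda}+\mathbf{1}$ and $\alpha\to\alpha+1$. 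The factor $(\lambda_j+1)_{m_j}$ then cancels, and we get
\[
\frac{\prod_j\Gamma(\lambda_j+1)}{\Gamma(\langle\bm{\lambda}\rangle+\alpha+k+1)}\prod_j\frac{(\lambda_j+1)_{n_j}}{n_j!}\sum_{\mathbf{m}}\frac{(-n_1)_{m_1}\cdots(-n_k)_{m_k}}{(\langle\bm{\lambda}\rangle+\alpha+k+1)_{\langle\mathbf{m}\rangle}}\frac{\mathbf{x}^{\mathbf{m}}}{\mathbf{m}!}.
\]
The sum is exactly $\Phi_2^{(k)}[-\mathbf{n};\langle\bm{\lambda}\rangle+\alpha+k+1;\mathbf{x}]$. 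By \eqref{Def-MLaguerre} it equals $\mathbf{n}!\,L_{\mathbf{n}}^{(\langle\bm{\lambda}\rangle+\alpha+k)}(\mathbf{x})/(\langle\bm{\lambda}\rangle+\alpha+k+1)_{\langle\mathbf{n}\rangle}$. Collecting the gamma factors as in Step 1 reproduces the same constant.

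The main point needing care is justifying Oshima's termwise formula in this setting. The operator is applied to a polynomial, so the sum is finite, and each monomial is handled by the Dirichlet integral
\[
\int_{E_k}\mathbf{s}^{\bm{\lambda}+\mathbf{m}}(1-\langle\mathbf{s}\rangle)^{\alpha}\,\mathrm{d}\mathbf{s}=\frac{\prod_j\Gamma(\lambda_j+m_j+1)\,\Gamma(\alpha+1)}{\Gamma(\langle\bm{\lambda}+\mathbf{m}\rangle+\alpha+k+1)}.
\]
This integral converges precisely because $\lambda_j>-1$ and $\alpha>-1$. Beyond that, the remaining difficulty is bookkeeping of the Pochhammer symbols.
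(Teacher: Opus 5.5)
Your proposal is correct, and Step 1 is exactly the paper's argument. The paper applies \eqref{OshimaOperator-2} to $\prod_j L_{n_j}^{(\lambda_j)}(x_j)$, substitutes \eqref{Srivastava-Niukkanen Integral}, and absorbs the Pochhammer symbols into gamma functions.

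Step 2 goes beyond what the paper does. You apply Oshima's termwise action to the finite monomial expansion, justified directly by the Dirichlet integral. You then recognize $\Phi_2^{(k)}[-\mathbf{n};\langle\bm{\lambda}\rangle+\alpha+k+1;\mathbf{x}]$ via \eqref{Def-MLaguerre}. This gives the proposition without citing Srivastava--Niukkanen at all. Combined with \eqref{OshimaOperator-2}, it in fact reproves \eqref{Srivastava-Niukkanen Integral}, so the result no longer depends on the external reference. Your coefficients $c_{\mathbf{m}}$, the cancellation of $(\lambda_j+1)_{m_j}$, the Dirichlet integral, and the final gamma-factor bookkeeping all check out.
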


With the aid of Proposition \ref{Prop-FIR-MLaguerre}, we rather inadvertently obtain from the result of Srivastava and Singhal \cite{Srivastava-Singhal-1972} a new generating function for the multivariate Laguerre polynomials. Recall that (see \cite[p. 1239, Eq. (5)]{Srivastava-Singhal-1972}; see also \cite[p. 9, Eq. (1.13)]{Srivastava-1984} and \cite[p. 468, Eq. (1)]{Srivastava-Manocha-Book-1984})
\begin{align}\label{Srivastava-Singhal-GF}
	&\sum_{\mathbf{n}=\mathbf{0}}^{\infty}\frac{(m+\langle\mathbf{n}\rangle)!}{(\lambda_1+1)_{n_1}\cdots (\lambda_k+1)_{n_l}}
	L_{m+\langle\mathbf{n}\rangle}^{(\beta)}(x)
	\prod_{j=1}^{k} 
	L_{n_j}^{(\lambda_j)}(y_j)
	\mathbf{u}^{\mathbf{n}}=(\beta+1)_m(1-\langle\mathbf{u}\rangle)^{-\beta-m-1}\mathrm{e}^x\notag\\
	&\hspace{2cm}\cdot
	\Psi_2^{(k+1)}\left[\beta+k+1;\bm{\lambda}+\mathbf{1},\beta+1;\frac{u_1 y_1}{\langle\mathbf{u}\rangle-1},\cdots,\frac{u_k y_k}{\langle\mathbf{u}\rangle-1},\frac{x}{\langle\mathbf{u}\rangle-1}\right],
\end{align}
where 
\begin{equation}\label{Def-Psi2}
	\Psi_2^{(k+1)}[a;\mathbf{c},c_{k+1};\mathbf{x},x_{k+1}]
	:=\sum_{\substack{\mathbf{n}=\mathbf{0},\\ n_{k+1}=0}}^{\infty}\frac{(a)_{\langle\mathbf{n}\rangle+n_{k+1}}}{(c_1)_{n_1}\cdots(c_k)_{n_k}(c_{k+1})_{n_{k+1}}}\frac{\mathbf{x}^\mathbf{n}}{\mathbf{n}!}\cdot\frac{x_{k+1}^{n_{k+1}}}{n_{k+1}!},
\end{equation}
is a confluent form of the Lauricella function $F_C^{(k+1)}$ or $F_A^{(k+1)}$ \cite[p. 62, Eq. (13)]{Srivastava-Manocha-Book-1984}. At the very first glance, this formula \eqref{Srivastava-Singhal-GF} appears very complex and is a rather isolated result. However, in what follows, we shall show that from a higher-dimensional perspective, this result is quite a concise result.

Applying $K_{\mathbf{y}}^{\alpha+1,\bm{\lambda}+\mathbf{1}}$ on both sides of \eqref{Srivastava-Singhal-GF}, we obtain
\begin{align*}
	&\sum_{\mathbf{n}=\mathbf{0}}^{\infty}\frac{(m+\langle\mathbf{n}\rangle)!L_{m+\langle\mathbf{n}\rangle}^{(\beta)}(x)
		L_{\mathbf{n}}^{(\langle\bm{\lambda}\rangle+\alpha+k)}(\mathbf{y})}{(\langle\bm{\lambda}\rangle+\alpha+k+1)_{\langle\mathbf{n}\rangle}}
	\mathbf{u}^{\mathbf{n}}\\
	&\hspace{1cm}=\frac{\Gamma(\langle\bm{\lambda}\rangle+\alpha+k+1)}{\Gamma(\lambda_1+1)\cdots\Gamma(\lambda_k+1)}(\beta+1)_m(1-\langle\mathbf{u}\rangle)^{-\beta-m-1}\mathrm{e}^x\\
	&\hspace{1.5cm}\cdot K_{\mathbf{y}}^{\alpha+1,\bm{\lambda}+\mathbf{1}}
	\Psi_2^{(k+1)}\left[\beta+m+1;\bm{\lambda}+\mathbf{1},\beta+1;\frac{u_1 y_1}{\langle\mathbf{u}\rangle-1},\cdots,\frac{u_k y_k}{\langle\mathbf{u}\rangle-1},\frac{x}{\langle\mathbf{u}\rangle-1}\right],
\end{align*}
where
\begin{align*}
	& K_{\mathbf{y}}^{\alpha+1,\bm{\lambda}+\mathbf{1}}
	\Psi_2^{(k+1)}\left[\beta+m+1;\bm{\lambda}+\mathbf{1},\beta+1;\frac{u_1 y_1}{\langle\mathbf{u}\rangle-1},\cdots,\frac{u_k y_k}{\langle\mathbf{u}\rangle-1},\frac{x}{\langle\mathbf{u}\rangle-1}\right]\\
	&\hspace{0.5cm}=\frac{\Gamma(\lambda_1+1)\cdots\Gamma(\lambda_k+1)}{\Gamma(\langle\bm{\lambda}\rangle+k+1+\alpha)}
	\sum_{\substack{\mathbf{n}=\mathbf{0},\\n_{k+1}=0}}^{\infty}
	\frac{(\beta+m+1)_{\langle\mathbf{n}\rangle+n_{k+1}}}{(\langle\bm{\lambda}\rangle+k+\alpha+1)_{\langle\mathbf{n}\rangle}(\beta+1)_{n_{k+1}}}\\
	&\hspace{1.5cm}\cdot \left(\frac{u_1 y_1}{\langle\mathbf{u}\rangle-1}\right)^{n_1}
	\cdots\left(\frac{u_k y_k}{\langle\mathbf{u}\rangle-1}\right)^{n_k}
	\left(\frac{x}{\langle\mathbf{u}\rangle-1}\right)^{n_{k+1}}\\
	&\hspace{0.5cm}=\frac{\Gamma(\lambda_1+1)\cdots\Gamma(\lambda_k+1)}{\Gamma(\langle\bm{\lambda}\rangle+k+1+\alpha)}
	\sum_{n_{k+1}=0}^{\infty}\frac{(\beta+m+1)_{n_{k+1}}}{(\beta+1)_{n_{k+1}} n_{k+1}!}\left(\frac{x}{\langle\mathbf{u}\rangle-1}\right)^{n_{k+1}}\\
	&\hspace{1.5cm}\cdot\sum_{\mathbf{n}=\mathbf{0}}^{\infty}
	\frac{(\beta+m+1+n_{k+1})_{\langle\mathbf{n}\rangle}}{(\langle\bm{\lambda}\rangle+k+\alpha+1)_{\langle\mathbf{n}\rangle}\mathbf{n}!}
	\left(\frac{u_1 y_1}{\langle\mathbf{u}\rangle-1}\right)^{n_1}
	\cdots\left(\frac{u_k y_k}{\langle\mathbf{u}\rangle-1}\right)^{n_k}
	\\
	&\hspace{0.5cm}=\frac{\Gamma(\lambda_1+1)\cdots\Gamma(\lambda_k+1)}{\Gamma(\langle\bm{\lambda}\rangle+k+1+\alpha)}
	\sum_{n_{k+1}=0}^{\infty}\frac{(\beta+m+1)_{n_{k+1}}}{(\beta+1)_{n_{k+1}} n_{k+1}!}\left(\frac{x}{\langle\mathbf{u}\rangle-1}\right)^{n_{k+1}}\\
	&\hspace{1.5cm}\cdot\sum_{\ell=0}^{\infty}
	\frac{(\beta+m+1+n_{k+1})_{\ell}}{(\langle\bm{\lambda}\rangle+k+\alpha+1)_{\ell}\ell!}
	\left(\frac{\langle\mathbf{u}\circ\mathbf{y}\rangle}{\langle\mathbf{u}\rangle-1}\right)^{\ell}
	\\
	&\hspace{0.5cm}=\frac{\Gamma(\lambda_1+1)\cdots\Gamma(\lambda_k+1)}{\Gamma(\langle\bm{\lambda}\rangle+k+1+\alpha)}
	\sum_{\ell=0}^{\infty}\frac{(\beta+m+1)_{\ell}}{(\beta+1)_{\ell} \ell!}\left(\frac{x}{\langle\mathbf{u}\rangle-1}\right)^{\ell}
	{}_{1}F_{1}\left[\begin{matrix}
		\beta+m+1+\ell\\
		\langle\bm{\lambda}\rangle+k+\alpha+1
	\end{matrix};\frac{\langle\mathbf{u}\circ\mathbf{y}\rangle}{\langle\mathbf{u}\rangle-1}\right]
	\\
	&\hspace{0.5cm}=\frac{\Gamma(\lambda_1+1)\cdots\Gamma(\lambda_k+1)}{\Gamma(\langle\bm{\lambda}\rangle+k+1+\alpha)}
	\Psi_2^{(2)}\left[\beta+m+1;\beta+1;\langle\bm{\lambda}\rangle+k+\alpha+1;\frac{x}{\langle\mathbf{u}\rangle-1}, \frac{\langle\mathbf{u}\circ\mathbf{y}\rangle}{\langle\mathbf{u}\rangle-1}\right].
\end{align*}
Now we have
\begin{align}\label{Srivastava-Singhal-GF-1}
	&\sum_{\mathbf{n}=\mathbf{0}}^{\infty}\frac{(m+\langle\mathbf{n}\rangle)!}{(\langle\bm{\lambda}\rangle+\alpha+k+1)_{\langle\mathbf{n}\rangle}}
	L_{m+\langle\mathbf{n}\rangle}^{(\beta)}(x)
	L_{\mathbf{n}}^{(\langle\bm{\lambda}\rangle+\alpha+k)}(\mathbf{y})
	\mathbf{u}^{\mathbf{n}}=(\beta+1)_m(1-\langle\mathbf{u}\rangle)^{-\beta-m-1}\mathrm{e}^x\notag\\
	&\hspace{2.5cm}\cdot \Psi_2^{(2)}\left[\beta+m+1;\beta+1;\langle\bm{\lambda}\rangle+k+\alpha+1;\frac{x}{\langle\mathbf{u}\rangle-1}, \frac{\langle\mathbf{u}\circ\mathbf{y}\rangle}{\langle\mathbf{u}\rangle-1}\right].
\end{align}
Finally, letting $\langle\bm{\lambda}\rangle+\alpha+k\rightarrow\lambda$ in \eqref{Srivastava-Singhal-GF-1} gives
\begin{corollary}
	We have
	\begin{align}\label{Srivastava-Singhal-GF-2}
		&\sum_{\mathbf{n}=\mathbf{0}}^{\infty}\frac{(m+\langle\mathbf{n}\rangle)!}{(\lambda+1)_{\langle\mathbf{n}\rangle}}
		L_{m+\langle\mathbf{n}\rangle}^{(\beta)}(x)
		L_{\mathbf{n}}^{(\lambda)}(\mathbf{y})
		\mathbf{u}^{\mathbf{n}}\notag\\
		&\hspace{1cm}=(\beta+1)_m(1-\langle\mathbf{u}\rangle)^{-\beta-m-1}\mathrm{e}^x \Psi_2^{(2)}\left[\beta+m+1;\beta+1;\lambda+1;\frac{x}{\langle\mathbf{u}\rangle-1}, \frac{\langle\mathbf{u}\circ\mathbf{y}\rangle}{\langle\mathbf{u}\rangle-1}\right].
	\end{align}
\end{corollary}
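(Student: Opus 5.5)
As an alternative to the route through Proposition \ref{Prop-FIR-MLaguerre}, I would prove \eqref{Srivastava-Singhal-GF-2} by a direct series manipulation. That route needs $\lambda=\langle\bm{\lambda}\rangle+\alpha+k$ with $\lambda_j,\alpha>-1$. A direct proof avoids this restriction and gives the formula for all $\lambda\in\mathbb{C}\setminus\mathbb{Z}_{<0}$ and small $|u_1|+\cdots+|u_k|$.

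\emph{Step 1.} Start from \eqref{Def-MLaguerre}. Since $(-n_r)_{j_r}/n_r!=(-1)^{j_r}/(n_r-j_r)!$, we have
\[
\frac{L_{\mathbf{n}}^{(\lambda)}(\mathbf{y})}{(\lambda+1)_{\langle\mathbf{n}\rangle}}=\sum_{\mathbf{j}=\mathbf{0}}^{\mathbf{n}}\frac{(-\mathbf{y})^{\mathbf{j}}}{(\lambda+1)_{\langle\mathbf{j}\rangle}\,\mathbf{j}!\,(\mathbf{n}-\mathbf{j})!}.
\]
Insert this into the left-hand side and put $\mathbf{n}=\mathbf{p}+\mathbf{j}$. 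The sum splits as an outer sum over $\mathbf{j}$ with weight $(-\mathbf{u}\circ\mathbf{y})^{\mathbf{j}}/\bigl((\lambda+1)_{\langle\mathbf{j}\rangle}\mathbf{j}!\bigr)$ and an inner sum
\[
\sum_{\mathbf{p}}(m+\langle\mathbf{j}\rangle+\langle\mathbf{p}\rangle)!\,L^{(\beta)}_{m+\langle\mathbf{j}\rangle+\langle\mathbf{p}\rangle}(x)\,\frac{\mathbf{u}^{\mathbf{p}}}{\mathbf{p}!}.
\]

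\emph{Step 2.} Both sums depend on their index only through $\langle\cdot\rangle$. Applying Srivastava's reduction $\sum_{\mathbf{n}}f(\langle\mathbf{n}\rangle)\mathbf{x}^{\mathbf{n}}/\mathbf{n}!=\sum_n f(n)\langle\mathbf{x}\rangle^n/n!$ (used in the proof of Proposition \ref{Prop-1}) collapses them to single sums. The variables are $t:=\langle\mathbf{u}\rangle$ for $\ell$ and $-\langle\mathbf{u}\circ\mathbf{y}\rangle$ for $s$.

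\emph{Step 3.} Set $N=m+s$. Evaluate the inner sum by the classical generating function
\[
\sum_{\ell}\binom{N+\ell}{\ell}L^{(\beta)}_{N+\ell}(x)t^\ell=(1-t)^{-\beta-N-1}\mathrm{e}^{-xt/(1-t)}L^{(\beta)}_N\bigl(x/(1-t)\bigr).
\]
This is \eqref{Prop-1-Cor-2-Remark-3} rewritten, or follows directly from the univariate generating function.

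\emph{Step 4.} Write $N!\,L_N^{(\beta)}(z)=(\beta+1)_N\,\mathrm{e}^{z}\,{}_1F_1[\beta+1+N;\beta+1;-z]$ by Kummer's transformation, with $z=x/(1-t)$. The exponentials combine to $\mathrm{e}^{-xt/(1-t)}\mathrm{e}^{x/(1-t)}=\mathrm{e}^x$. Using $(\beta+1)_{m+s}=(\beta+1)_m(\beta+m+1)_s$ and $(\beta+m+1)_s(\beta+m+1+s)_r=(\beta+m+1)_{s+r}$, the remaining double series over $(r,s)$ is exactly
\[
(\beta+1)_m(1-t)^{-\beta-m-1}\mathrm{e}^x\,\Psi_2^{(2)}\!\left[\beta+m+1;\beta+1,\lambda+1;\frac{x}{t-1},\frac{\langle\mathbf{u}\circ\mathbf{y}\rangle}{t-1}\right].
\]
Here the extra factor $(1-t)^{-s}$ is absorbed into the second argument.

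The main obstacle is justifying the rearrangements in Steps 1--3. I would first take $|u_1|+\cdots+|u_k|$ small, where everything converges absolutely, using the crude bound $|N!L_N^{(\beta)}(x)|\le C\,N!\,\mathrm{e}^{|x|}\cdot$poly$(N)$ against $t^\ell/\ell!$-type weights. Alternatively, I would argue termwise as an identity of formal power series in $\mathbf{u}$, since both sides are analytic near $\mathbf{u}=\mathbf{0}$. The identity then extends by analytic continuation in $\mathbf{u}$ and in $\lambda$.
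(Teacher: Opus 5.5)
Your argument is correct, and it takes a genuinely different route from the paper's. The paper obtains the corollary indirectly. It starts from the Srivastava--Singhal formula \eqref{Srivastava-Singhal-GF} for products $\prod_{j}L_{n_j}^{(\lambda_j)}(y_j)$ and applies Oshima's operator $K_{\mathbf{y}}^{\alpha+1,\bm{\lambda}+\mathbf{1}}$ to both sides. It then uses Proposition~\ref{Prop-FIR-MLaguerre} (the Srivastava--Niukkanen integral) to turn each product into a multiple of $L_{\mathbf{n}}^{(\langle\bm{\lambda}\rangle+\alpha+k)}(\mathbf{y})$, and lets the operator's diagonal multiplier collapse $\Psi_2^{(k+1)}$ to $\Psi_2^{(2)}$. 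Finally it renames $\langle\bm{\lambda}\rangle+\alpha+k$ as $\lambda$.

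That route is structural: it exhibits the corollary as the image of a known multiple generating function under Oshima's operator. But it depends on an external formula and applies an integral operator termwise to an infinite series. It also directly yields only real $\lambda>-1$, and the remaining $\lambda$ need an analytic continuation the paper does not state.

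Your computation is self-contained, elementary, and valid at once for all $\lambda\in\mathbb{C}\setminus\mathbb{Z}_{<0}$. It also exposes something the paper's route hides. After your Steps 1--2 the left-hand side equals $H(\langle\mathbf{u}\rangle,\langle\mathbf{u}\circ\mathbf{y}\rangle)$, where
\[
H(t,w)=\sum_{s,\ell=0}^{\infty}\frac{(-w)^{s}}{(\lambda+1)_{s}\,s!}\,(m+s+\ell)!\,L^{(\beta)}_{m+s+\ell}(x)\,\frac{t^{\ell}}{\ell!}
\]
does not depend on $k$. Hence the corollary is equivalent to its case $k=1$, i.e.\ to \eqref{Srivastava-Singhal-GF-3} with $(t,w)=(u,uy)$. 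Your Steps 3--4 are just a proof of that univariate formula.

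Two small repairs are needed.

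First, Step 3 is not \eqref{Prop-1-Cor-2-Remark-3} rewritten. That identity involves $L^{(\beta-n)}_{n+m}$, whose parameter moves with the summation index, whereas you need the fixed parameter $\beta$. The identity you actually use is the classical one obtained by putting $z=t+w$ in $\sum_{n}L_n^{(\beta)}(x)z^n=(1-z)^{-\beta-1}\mathrm{e}^{-xz/(1-z)}$ and comparing coefficients of $w^N$. That is your second justification, and it is the correct one.

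Second, the bound $|L_N^{(\beta)}(x)|\le C\,\mathrm{e}^{|x|}\,\mathrm{poly}(N)$ fails in general. For example, $L_N^{(0)}(-1)=\sum_{j}\binom{N}{j}/j!\ge\binom{N}{d+1}/(d+1)!$ for every $d$, so it outgrows every polynomial. What you need, and what is true, is a geometric bound. Cauchy's estimate on $|z|=\rho<1$ gives $|L_N^{(\beta)}(x)|\le M_\rho\,\rho^{-N}$. Combined with $(m+s+\ell)!\le 3^{m+s+\ell}\,m!\,s!\,\ell!$, the collapsed absolute series is at most
\[
M_\rho\, m!\,(3/\rho)^{m}\sum_{s=0}^{\infty}\frac{(3a/\rho)^{s}}{|(\lambda+1)_{s}|}\sum_{\ell=0}^{\infty}\Big(\frac{3b}{\rho}\Big)^{\ell},\qquad a=\sum_{i=1}^{k}|u_iy_i|,\quad b=\sum_{i=1}^{k}|u_i|.
\]
This is finite once $3b<\rho$, since $1/|(\lambda+1)_s|$ decays factorially. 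Absolute convergence then justifies the rearrangements in Steps 1--3 for small $\mathbf{u}$, and analytic continuation in $\mathbf{u}$ does the rest.
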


Note that when $k=1$, \eqref{Srivastava-Singhal-GF-2} corresponds to the following well-known result (\cite[p. 1240, Eq. (11)]{Srivastava-Singhal-1972}; see also \cite[p. 135, Eq. (20)]{Srivastava-Manocha-Book-1984})
\begin{align}\label{Srivastava-Singhal-GF-3}
	&\sum_{n=0}^{\infty}\frac{(m+n)!}{(\lambda+1)_{n}}
	L_{m+n}^{(\beta)}(x)
	L_{n}^{(\lambda)}(y)
	u^{n}\notag\\
	&\hspace{1cm}=(\beta+1)_m(1-u)^{-\beta-m-1}\mathrm{e}^x \Psi_2^{(2)}\left[\beta+m+1;\beta+1;\lambda+1;\frac{x}{u-1}, \frac{uy}{u-1}\right].
\end{align}

It is worth pointing out that the generating function \eqref{Srivastava-Singhal-GF-2} suggests the existence of the following more general result. 

\begin{theorem}\label{Th-ThakurtaType}
	If there exists a multiple generating relation of the form
	\begin{equation}\label{Th-ThakurtaType-1}
		G(x,\mathbf{u})=\sum_{\mathbf{n}=\mathbf{0}}^{\infty}a_{\mathbf{n}}L_{\langle\mathbf{n}\rangle+m}^{(\alpha)}(x)\mathbf{u}^{\mathbf{n}},
	\end{equation}
	then
	\begin{equation}\label{Th-ThakurtaType-2}
		(1-\langle\mathbf{u}\rangle)^{-\alpha-m-1}
		\exp\left(-\frac{\langle\mathbf{u}\rangle x}{1-\langle\mathbf{u}\rangle}\right)
		G\left(\frac{x}{1-\langle\mathbf{u}\rangle},\frac{\mathbf{u}\circ\mathbf{z}}{1-\langle\mathbf{u}\rangle}\right)
		=\sum_{\mathbf{n}=\mathbf{0}}^{\infty} A_{\mathbf{n}}(\mathbf{z}) L_{\langle\mathbf{n}\rangle+m}^{(\alpha)}(x)\mathbf{u}^{\mathbf{n}},
	\end{equation}
	where
	\[
	A_{\mathbf{n}}(\mathbf{z})=\sum_{\mathbf{j}=\mathbf{0}}^{\mathbf{n}}a_{\mathbf{j}}\frac{(\langle\mathbf{n}\rangle+m+1)!}{(\langle\mathbf{j}\rangle+m+1)!}\frac{\mathbf{z}^{\mathbf{j}}}{(\mathbf{n}-\mathbf{j})!}.
	\]
\end{theorem}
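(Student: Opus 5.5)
The plan is to follow the classical Thakurta-type argument, adapted from the one-variable case to the multi-index setting. The one-variable tool is the Laguerre shift generating function
\[
\sum_{n=0}^{\infty}\binom{n+N}{n}L_{n+N}^{(\alpha)}(x)t^{n}
=(1-t)^{-\alpha-N-1}\exp\left(-\frac{tx}{1-t}\right)L_{N}^{(\alpha)}\left(\frac{x}{1-t}\right),\qquad |t|<1 .
\]
It follows from \eqref{Prop-1-Cor-2-Remark-3}, or directly from the generating function of $L_n^{(\alpha)}$ by Rainville's method as in the proof of Corollary \ref{Prop-1-Cor-2}. I will apply it with $N=\langle\mathbf{j}\rangle+m$ and $t=\langle\mathbf{u}\rangle$.

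\textbf{Step 1.} Expand the left-hand side of \eqref{Th-ThakurtaType-2} using \eqref{Th-ThakurtaType-1}. This gives
\[
(1-\langle\mathbf{u}\rangle)^{-\alpha-m-1}
\exp\left(-\frac{\langle\mathbf{u}\rangle x}{1-\langle\mathbf{u}\rangle}\right)
\sum_{\mathbf{j}=\mathbf{0}}^{\infty}a_{\mathbf{j}}\mathbf{z}^{\mathbf{j}}\mathbf{u}^{\mathbf{j}}
(1-\langle\mathbf{u}\rangle)^{-\langle\mathbf{j}\rangle}
L_{\langle\mathbf{j}\rangle+m}^{(\alpha)}\left(\frac{x}{1-\langle\mathbf{u}\rangle}\right).
\]
Each summand has exactly the shape of the right-hand side of the shift formula with $N=\langle\mathbf{j}\rangle+m$.

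\textbf{Step 2.} Replace each summand by $\sum_{\ell}\binom{\ell+N}{\ell}L^{(\alpha)}_{\ell+N}(x)\langle\mathbf{u}\rangle^{\ell}$. Then expand $\langle\mathbf{u}\rangle^{\ell}$ by the multinomial theorem, $\langle\mathbf{u}\rangle^{\ell}=\sum_{\langle\mathbf{p}\rangle=\ell}\frac{\ell!}{\mathbf{p}!}\mathbf{u}^{\mathbf{p}}$, and set $\mathbf{n}=\mathbf{j}+\mathbf{p}$. The degree of the Laguerre polynomial becomes $\langle\mathbf{n}\rangle+m$. The accumulated coefficient of $L^{(\alpha)}_{\langle\mathbf{n}\rangle+m}(x)\mathbf{u}^{\mathbf{n}}$ is
\[
\sum_{\mathbf{j}=\mathbf{0}}^{\mathbf{n}} a_{\mathbf{j}}\mathbf{z}^{\mathbf{j}}\binom{\langle\mathbf{n}\rangle+m}{\langle\mathbf{n}-\mathbf{j}\rangle}\frac{\langle\mathbf{n}-\mathbf{j}\rangle!}{(\mathbf{n}-\mathbf{j})!}.
\]
This telescopes to a factorial ratio in $\langle\mathbf{n}\rangle$ and $\langle\mathbf{j}\rangle$ over $(\mathbf{n}-\mathbf{j})!$, which is the claimed form of $A_{\mathbf{n}}(\mathbf{z})$. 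The rearrangements of the multiple series are justified by absolute convergence for $\langle|\mathbf{u}|\rangle$ small, the same regime used in Corollary \ref{Prop-1-Cor-2}.

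\textbf{Main obstacle.} The hard part is the bookkeeping of the factorial ratio: confirming that it matches the paper's displayed $(\langle\mathbf{n}\rangle+m+1)!/(\langle\mathbf{j}\rangle+m+1)!$.
\begin{itemize}
\item The route above naturally produces $(\langle\mathbf{n}\rangle+m)!/(\langle\mathbf{j}\rangle+m)!$.
\item A sanity check with $k=1$, $m=0$, $a_n=\delta_{n0}$ gives left-hand side $\sum_n L_n^{(\alpha)}(x)u^n$, so the true coefficient is $A_n=1$. The displayed formula gives $n+1$ instead.
\item I will therefore first try to recover the $+1$ from an alternative normalization: a shifted generating function $\sum\binom{n+N+1}{n}\cdots$, or an index convention in $G$ inherited from \eqref{Srivastava-Singhal-GF-2}.
\end{itemize}
If no such reading exists, the computation in Step 2 shows that the statement holds exactly when the ratio is read without the $+1$. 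In that case the remaining proof goes through verbatim, and the discrepancy should be recorded as a misprint in the displayed ratio.
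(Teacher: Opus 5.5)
Your proof is correct, and so is your diagnosis. The displayed $A_{\mathbf n}$ is a misprint, and there is no alternative normalization to look for: your check with $k=1$, $m=0$, $a_n=\delta_{n0}$ is a genuine counterexample to the formula as printed ($A_n=1$, not $n+1$). The same slip appears in the paper's own proof. Iterating \eqref{Th-ThakurtaType-Proof-1} $\langle\mathbf j\rangle$ times produces $(\langle\mathbf n\rangle+m+1)\cdots(\langle\mathbf n\rangle+m+\langle\mathbf j\rangle)=(\langle\mathbf n\rangle+\langle\mathbf j\rangle+m)!/(\langle\mathbf n\rangle+m)!$, not the ratio written in \eqref{Th-ThakurtaType-Proof-4}. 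So the paper's method also gives $A_{\mathbf n}(\mathbf z)=\sum_{\mathbf j=\mathbf 0}^{\mathbf n}a_{\mathbf j}\frac{(\langle\mathbf n\rangle+m)!}{(\langle\mathbf j\rangle+m)!}\frac{\mathbf z^{\mathbf j}}{(\mathbf n-\mathbf j)!}$. Drop the hedging and state the theorem in this form.

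Your route differs from the paper's. The paper takes Thakurta's raising operator $\mathcal R=xy\,\partial/\partial x+y^2\,\partial/\partial y+(m+1-x)y$ and applies $\mathrm{e}^{\langle\mathbf u\rangle\mathcal R}$ to $y^{\alpha}G(x,y(\mathbf u\circ\mathbf z))$. It evaluates one side termwise through the raising relation, and the other through the closed form \eqref{Th-ThakurtaType-Proof-5}. That closed form comes from integrating the flow of the vector field and is asserted without proof; the paper then sets $y=1$.

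You instead use the fixed-parameter shift formula $\sum_{\ell}\binom{\ell+N}{\ell}L^{(\alpha)}_{\ell+N}(x)t^{\ell}=(1-t)^{-\alpha-N-1}\mathrm{e}^{-tx/(1-t)}L_N^{(\alpha)}\bigl(x/(1-t)\bigr)$ directly. This formula is exactly what $\mathrm{e}^{t\mathcal R}$ does to a single term $y^{\alpha+N-m}L_N^{(\alpha)}(x)$ at $y=1$, so the two proofs carry the same information. Yours is more elementary and self-contained, with no operator exponentials or interchange of $\mathrm{e}^{\langle\mathbf u\rangle\mathcal R}$ with an infinite series. It also makes the correct factorial ratio visible immediately. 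The paper's version fits the Weisner--Thakurta Lie-theoretic framework, which transfers to other families that have a known raising operator.

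Two small corrections. First, \eqref{Prop-1-Cor-2-Remark-3} is not the shift formula you need, because there the parameter $\beta-n$ varies with $n$. Use your second justification instead: replace $t$ by $t+s$ in the generating function of $L_n^{(\alpha)}$, factor $1-t-s=(1-t)\bigl(1-s/(1-t)\bigr)$, and compare coefficients of $s^N$. Second, you do not need absolute convergence. Only $\mathbf j$ with $\mathbf j\le\mathbf n$ contribute to the coefficient of $\mathbf u^{\mathbf n}$, so the identity holds as one of formal power series in $\mathbf u$. An analytic version for small $\mathbf u$ would need a growth bound on $a_{\mathbf n}$, which the statement does not supply.
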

\begin{proof}
	Let (\cite[p. 532]{Thakurta-1987})
		\[
		\mathcal{R}:=xy\frac{\partial}{\partial x}+y^2\frac{\partial}{\partial y}+(-x+m+1)y, 
		\]
		then
		\begin{equation}\label{Th-ThakurtaType-Proof-1}
			\mathcal{R}\left\{y^{\alpha+\langle\mathbf{n}\rangle}L_{\langle\mathbf{n}\rangle+m}(x)\right\}=(\langle\mathbf{n}\rangle+m+1)y^{\alpha+\langle\mathbf{n}\rangle+1}L_{\langle\mathbf{n}\rangle+m+1}^{(\alpha)}(x).
		\end{equation}
		Replacing $\mathbf{u}$ by $y(\mathbf{u}\circ\mathbf{z})$ in \eqref{Th-ThakurtaType-1} and multiplying both sides of the resulting identity by $y^{\alpha}$, we obtain 
		\begin{equation}\label{Th-ThakurtaType-Proof-2}
			y^{\alpha}G(x,y(\mathbf{u}\circ\mathbf{z}))=\sum_{\mathbf{n}=\mathbf{0}}^{\infty}a_{\mathbf{n}}(\mathbf{u}\circ\mathbf{z})^{\mathbf{n}} y^{\langle\mathbf{n}\rangle+\alpha}L_{\langle\mathbf{n}\rangle+m}^{(\alpha)}(x).
		\end{equation}
		Operating both sides of \eqref{Th-ThakurtaType-Proof-2} by $\mathrm{e}^{\langle\mathbf{u}\rangle\mathcal{R}}$, we have
		\begin{equation}\label{Th-ThakurtaType-Proof-3}
			\mathrm{e}^{\langle\mathbf{u}\rangle\mathcal{R}}\left\{y^{\alpha}G(x,y(\mathbf{u}\circ\mathbf{z}))\right\}
			=\mathrm{e}^{u_1\mathcal{R}}
			\cdots 
			\mathrm{e}^{u_k\mathcal{R}}
			\left\{\sum_{\mathbf{n}=\mathbf{0}}^{\infty}a_{\mathbf{n}}(\mathbf{u}\circ\mathbf{z})^{\mathbf{n}} y^{\langle\mathbf{n}\rangle+\alpha}L_{\langle\mathbf{n}\rangle+m}^{(\alpha)}(x)\right\}.
		\end{equation}
		The right-hand side of \eqref{Th-ThakurtaType-Proof-3} may be evaluated as 
		\begin{align}\label{Th-ThakurtaType-Proof-4}
			&\sum_{\mathbf{n}=\mathbf{0}}^{\infty}
			\sum_{\mathbf{j}=\mathbf{0}}^{\infty}
			a_{\mathbf{n}}(\mathbf{u}\circ\mathbf{z})^{\mathbf{n}}
			\frac{\mathbf{u}^{\mathbf{j}}}{\mathbf{j}!}
			\mathcal{R}^{\langle\mathbf{j}\rangle}\left\{ y^{\langle\mathbf{n}\rangle+\alpha}L_{\langle\mathbf{n}\rangle+m}^{(\alpha)}(x)\right\}\notag\\
			&=y^{\alpha}\sum_{\mathbf{n}=\mathbf{0}}^{\infty}
			\sum_{\mathbf{j}=\mathbf{0}}^{\infty}
			a_{\mathbf{n}}\mathbf{u}^{\mathbf{n}+\mathbf{j}}\mathbf{z}^{\mathbf{n}}
			\frac{(\langle\mathbf{n}\rangle+\langle\mathbf{j}\rangle+m+1)!}{\mathbf{j}!(\langle\mathbf{n}\rangle+m+1)!} 
			y^{\langle\mathbf{n}\rangle+\langle\mathbf{j}\rangle}L_{\langle\mathbf{n}\rangle+\langle\mathbf{j}\rangle+m}^{(\alpha)}(x)\notag\\
			&=y^{\alpha}\sum_{\mathbf{n}=\mathbf{0}}^{\infty}
			y^{\langle\mathbf{n}\rangle}L_{\langle\mathbf{n}\rangle+m}^{(\alpha)}(x)\mathbf{u}^{\mathbf{n}}
			\left(\sum_{\mathbf{j}=\mathbf{0}}^{\mathbf{n}}
			a_{\mathbf{n}-\mathbf{j}}\mathbf{z}^{\mathbf{n}-\mathbf{j}}
			\frac{(\langle\mathbf{n}\rangle+m+1)!}{\mathbf{j}!(\langle\mathbf{n}\rangle-\mathbf{j}+m+1)!}\right) \notag\\
			&=y^{\alpha}\sum_{\mathbf{n}=\mathbf{0}}^{\infty}
			A_{\mathbf{n}}(\mathbf{z})L_{\langle\mathbf{n}\rangle+m}^{(\alpha)}(x)(y\mathbf{u})^{\mathbf{n}}, 
		\end{align}
		while the left-hand side of \eqref{Th-ThakurtaType-Proof-3} may be evaluated as
		\begin{equation}\label{Th-ThakurtaType-Proof-5}
			\mathrm{e}^{\langle\mathbf{u}\rangle\mathcal{R}}\left\{y^{\alpha}G(x,y(\mathbf{u}\circ\mathbf{z}))\right\}
			=(1-\langle\mathbf{u}y\rangle)^{-\alpha-m-1}
			\exp\left(-\frac{\langle\mathbf{u}\rangle xy}{1-\langle\mathbf{u}\rangle y}\right) y^\alpha 
			G\left(\frac{x}{1-\langle\mathbf{u}\rangle y},\frac{y(\mathbf{u}\circ\mathbf{z})}{1-\langle\mathbf{u}\rangle y}\right).
		\end{equation}
		The bilateral generating function given by \eqref{Th-ThakurtaType-2} is obtained by 
		equating \eqref{Th-ThakurtaType-Proof-4} and \eqref{Th-ThakurtaType-Proof-5} and then setting $y=1$.
\end{proof}
\begin{remark}
		Theorem \ref{Th-ThakurtaType} can be viewed as a multivariate generalization of Thakurta's result \cite[p. 531, Theorem 1]{Thakurta-1987}. In addition, the technique used in the proof has a deep group-theoretic background and has been widely applied to find generating functions for various sequences of univariate polynomials (see, for example, \cite{Chongdar-Mukherjee-1988}, \cite{Chongdar-Chongdar-Mohanta-2024} and \cite{Sharma-Chongdar-1991}). However, its use in the multivariate setting is still rare. The present theorem offers a very illustrative example.
\end{remark}

Note that, by letting 
	\[
	a_{\mathbf{j}}=\frac{(\langle\mathbf{j}\rangle+m+1)!}{(\lambda+1)_{\langle\mathbf{j}\rangle}}\frac{(-1)^{\langle\mathbf{j}\rangle}}{\mathbf{j}!},
	\]
	we have 
	\[
	A_{\mathbf{n}}(\mathbf{z})
	=\frac{(\langle\mathbf{n}\rangle+m+1)!}{\mathbf{n}!}\sum_{\mathbf{j}=\mathbf{0}}^{\mathbf{n}}\frac{(-n_1)_{j_1}\cdots(-n_k)_{j_k}}{(\lambda+1)_{\langle\mathbf{j}\rangle}}\frac{\mathbf{z}^{\mathbf{j}}}{\mathbf{j}!}
	=\frac{(\langle\mathbf{n}\rangle+m+1)!}{(\lambda+1)_{\langle\mathbf{n}\rangle}}L_{\mathbf{n}}^{(\alpha)}(\mathbf{z}).
	\]
	Thus, more generating functions of type \eqref{Srivastava-Singhal-GF-2} can be derived by specializing Theorem \ref{Th-ThakurtaType}. We leave this to interested readers.

\section{Concluding remarks}\label{Sec-ConcludingRemarks}

In the present work we have studied different types of generating functions, product formulas and fractional integral representation for Erd\'{e}lyi's multivariate Laguerre polynomials $L_{\mathbf{n}}^{(\alpha)}(\mathbf{x})$. In this concluding section, we would aim at providing an interesting evaluation of a ``non-traditional'' generating function involving the \emph{main diagonal sequence} and leave an open problem for future research.

Recall that if 
\[
F(\mathbf{z})=\sum_{\textbf{n=0}}^{\infty}f_{\mathbf{n}}\mathbf{z}^{\mathbf{n}}
\]
is a (formal or convergent) multivariate power series, the \emph{main diagonal} of $F(\mathbf{z})$ is the univariate power series
\[
(\Delta F)(u)=\sum_{n=0}^{\infty}f_{n,\cdots,n}u^n.
\]
For example, we have (\cite[p. 102, Example 3.4]{Melczer-Book-2021})
\[
\Delta \left(\frac{1}{1-x-y}\right)
=\Delta\left(\sum_{i,j=0}^{\infty}\binom{i+j}{i}x^i y^j\right)
=\sum_{n=0}^{\infty}\binom{2n}{n} u^n
=\frac{1}{\sqrt{1+4u}}.
\]
An extensive theory of this type of generating function has already been developed and their applications can be found in many combinatorial problems (see \cite{Pemantle-Wilson-Book-2013}). Here, we are going to evaluate
\[
\Delta\left(\mathrm{e}^{-\langle\mathbf{u}\circ \mathbf{x}\rangle}(1+\langle\mathbf{u}\rangle)^{-\beta}\right)
=\sum_{n=0}^{\infty}L_{n,\cdots,n}^{(-\beta-kn)}(\mathbf{x})u^n.
\]

From the integral representation \eqref{ContourIR-MLaguerre}, we have
\begin{align}
	\sum_{n=0}^{\infty}L_{n,\cdots,n}^{(-\beta-kn)}(\mathbf{x})u^n
	&=\frac{\Gamma(1-\beta)}{2\pi\mathrm{i}}\sum_{n=0}^{\infty}
	\frac{u^n}{(n!)^k}\int_{-\infty}^{(0+)}\mathrm{e}^s s^{\beta-1}(s-x_1)^{n}
	\cdots 
	(s-x_k)^{n}\mathrm{d}s\notag\\
	&=\frac{\Gamma(1-\beta)}{2\pi\mathrm{i}}\int_{-\infty}^{(0+)}\mathrm{e}^s s^{\beta-1}
	F_k\big(u(s-x_1)
	\cdots 
	(s-x_k)\big)\mathrm{d}s\notag\\
	&=:G_k(\mathbf{x},u),\label{IntegralRofGF-1}
\end{align}
where $F_\gamma(z)$ is the \emph{Le Roy function} defined by (\cite{Le Roy-1899}; refer also to \cite{Le Roy-1900} )
\begin{equation}\label{Def-LeRoy}
	F_\gamma (z):=\sum_{n=0}^{\infty}\frac{z^n}{(n!)^{\gamma}}~~~(\gamma\in\mathbb{C}).       \end{equation}
It is well-known that $F_k(z)$ $(k\in\mathbb{Z}_{\geq1})$ is an entire function of order $1/k$ and type $1$, and is also \emph{holonomic} (or \emph{$D$-finite}). In recent years, the Le Roy function and its generalizations have attracted attention of many researchers (see, for example, \cite{Garrappa-Rogosin-Mainardi-2017, Gerhold-2012} and \cite{Rogosin-Dubatovskaya-2023}).

When $\gamma=1$ in \eqref{Def-LeRoy}, it follows that $F_1(z)=\mathrm{e}^z$, and thus from \eqref{IntegralRofGF-1} we have                                
\[
G_1(x,u)
=\mathrm{e}^{-ux}\frac{\Gamma(1-\beta)}{2\pi\mathrm{i}}\int_{-\infty}^{(0+)}\mathrm{e}^{(1+u)s} s^{\beta-1}\mathrm{d}s
=(1+u)^{-\beta}\mathrm{e}^{-ux},
\]
which agrees with the classial result \eqref{GF-Laguerre-2}. In general, however, the contour integral cannot be reduced to an expression involving only elementary functions or known special functions. Under the condition $\Re(\beta)>0$, we can follow usual method of handling such type of contour integrals to obtain
\begin{equation}\label{IntegralRofGF-2}
	G_k(\mathbf{x},u)=\frac{1}{\Gamma(\beta)}\int_{0}^{\infty}\mathrm{e}^{-s} s^{\beta-1}
	F_{k}\big((-1)^k u(s+x_1)
	\cdots 
	(s+x_k)\big)\mathrm{d}s.
\end{equation}

By writing $\displaystyle \|\mathbf{x}\|:=\max_{1\leq j\leq k}\{|x_j|\}$, we have
\[
\int_{0}^{\infty}\mathrm{e}^{-s} s^{\Re(\beta)-1}
\left|F_{k}\big((-1)^k u(s+x_1)
\cdots 
(s+x_k)\big)\right|\mathrm{d}s
\leq \int_{0}^{\infty}\mathrm{e}^{-s} s^{\Re(\beta)-1}
F_k\big(|u|(s+\|\mathbf{x}\|)^k\big)\mathrm{d}s.
\]
For real $z$, recall that (\cite[p. 398, Theorem 1]{Gerhold-2012})
\[
F_k(z)\sim C_k z^{(1-k)/(2k)}\mathrm{e}^{kz^{1/k}}, \quad z\rightarrow+\infty,
\]
where $C_k:=(2\pi)^{(1-k)/2}k^{-1/2}$ $(k\in\mathbb{Z}_{\geq1})$. We have
then
\[
F_k\big(|u|(s+\|\mathbf{x}\|)^k\big)\sim C_k 
|u|^{(1-k)/(2k)}
\mathrm{e}^{k|u|^{1/k}\|\mathbf{x}\|}
\cdot (s+\|\mathbf{x}\|)^{(1-k)/2}
\mathrm{e}^{k|u|^{1/k}s}, \quad s\rightarrow+\infty. 
\]
Therefore, the convergence of the integral in \eqref{IntegralRofGF-2} is guaranteed by the condition $|u|<1/k^k$.

Finally, we believe that the investigation of the asymptotic behaviour of $L_{n,\cdots,n}^{(-\beta-kn)}(\mathbf{x})$ as $n\rightarrow+\infty$ by means of the generating function given by \eqref{IntegralRofGF-1} or \eqref{IntegralRofGF-2} would be an interesting problem, and we leave this as a worthwhile proposal for future research.

\section*{Funding}

No funding was received for conducting this study.

\section*{Conflicts of interest}

The authors declare that there is no conflict of interest.

\section*{Data Availability}

This manuscript has no associated data.

\section*{Acknowledgements}

We are grateful to Peng-Cheng Hang and Shu-Ying Shen for their careful reading of the initial manuscript and providing valuable suggestions.


\end{document}